 \newtheorem*{example}{Example}
 \numberwithin{equation}{section}
 \newtheorem{remark}[theorem]{remark}
 \newcommand{\I}{\mathrm{i}}
\newcommand{\D}{\mathrm{d}}
\newcommand{\lb}{\left(}
\newcommand{\vp}{\varphi}
\newcommand{\rb}{\right)}
\newcommand{\PD}{\partial}
\newcommand{\wt}{\widetilde}
\newcommand{\Dc}{\mathcal{D}}
\newcommand{\Fc}{\mathcal{F}}
\newcommand{\Gc}{\mathcal{G}}
\newcommand{\Tc}{\mathcal{T}}
\newcommand{\Beq}{\begin{equation}}
\newcommand{\Eeq}{\end{equation}}
\newcommand{\beq}{\begin{equation*}}
\newcommand{\eeq}{\end{equation*}}
\newcommand{\bal}{\begin{align}}
\newcommand{\eal}{\end{align}}
\newcommand{\g}{\gamma}
\newcommand{\n}{\nabla}
\newcommand{\A}{\alpha}
\newcommand{\B}{\beta}
\newcommand{\bp}{\begin{prob}}
\newcommand{\ep}{\end{prob}}
\newcommand{\bpr}{\begin{proof}}
\newcommand{\epr}{\end{proof}}
\newcommand{\om}{\omega}
\newcommand{\Go}{\mathcal{G}_1}
\newcommand{\Goo}{{\mathcal{G}_1^1}}
\newcommand{\Got}{{\mathcal{G}_1^2}}
\newcommand{\Goth}{{\mathcal{G}_1^3}}
\newcommand{\Goop}{{\mathcal{G}_1^{1+}}}
\newcommand{\Gotp}{{\mathcal{G}_1^{2+}}}
\newcommand{\Gooo}{{\mathcal{G}_1^{1o}}}
\newcommand{\Goto}{{\mathcal{G}_1^{2o}}}
\newcommand{\Goom}{{\mathcal{G}_1^{1-}}}
\newcommand{\Gotm}{{\mathcal{G}_1^{2-}}}
\newcommand{\Gt}{\mathcal{G}_2}
\newcommand{\Gth}{\mathcal{G}_3}
\newcommand{\Gtho}{{\mathcal{G}_3^1}}
\newcommand{\Gtht}{{\mathcal{G}_3^2}}
\newcommand{\Gthth}{{\mathcal{G}_3^3}}
\newcommand{\Gthf}{{\mathcal{G}_3^4}}
\newcommand{\sgp}{\operatorname{\sigma^+}}
\newcommand{\sgo}{\operatorname{\sigma^o}}
\newcommand{\sgm}{\operatorname{\sigma^-}}
\newcommand{\xl}{{x_\ell}}
\newcommand{\xr}{{x_r}}
\newcommand{\CF}{\mathcal{C}_{\Fc}}
\newcommand{\CG}{\mathcal{C}_{\Gc}}
\newcommand{\paren}[1]{ \left({#1}\right)}
\newcommand{\sparen}[1]{ \left\{{#1}\right\}}
\newcommand{\bparen}[1]{ \left[{#1}\right]}
\newcommand{\abs}[1]{ \left|{#1}\right|}
\newcommand{\norm}[1]{ \left\|{#1}\right\|}
\newcommand{\st}{\,:\,}
\newcommand{\smo}{\setminus \mathbf{0}}
\newcommand{\bel}[1]{\begin{equation}\label{#1}}
\newcommand{\be}{\begin{equation}}
\newcommand{\ee}{\end{equation}}
\newcommand{\gr}{\gamma_R}
\newcommand{\gt}{\gamma_T}
\newcommand{\rtwo}{\mathbb{R}^2}
\newcommand{\rr}{\mathbb{R}}
\newcommand{\zz}{\mathbb{Z}}
\newcommand{\eps}{\epsilon}
\newcommand{\supp}{\operatorname{supp}}
\newcommand{\lsp}{\left\{}
\newcommand{\rsp}{\right\}}
\newcommand{\Stwo}{\Sigma_{2}}
\newcommand{\Stwox}{\Sigma_{2,X}}
\newcommand{\so}{s_0}
\newcommand{\Gotc}{{\Gc_1^* \Gc_2}}
\newcommand{\At}{\widetilde{A}}
\newcommand{\Bt}{\widetilde{B}}
\numberwithin{theorem}{section}
\newcommand{\TheTitle}{Singular FIO\MakeLowercase{s} in SAR Imaging} 
\newcommand{\TheAuthors}{Ambartsoumian, Felea, Krishnan, Nolan, and Quinto}
\headers{\TheTitle}{\TheAuthors}
\title{{Singular FIO\MakeLowercase{s} in SAR Imaging, II: Transmitter and
 Receiver at Different Speeds}\thanks{Submitted to the editors DATE.
}}
\author{
  G. Ambartsoumian\thanks{Department of Mathematics, University of Texas at Arlington, TX, USA (\email{gambarts@uta.edu})}
  \and
  R. Felea\thanks{(Corresponding Author) School of Mathematical Sciences, Rochester Institute of Technology, NY, USA (\email{rxfsma@rit.edu})}
  \and
  V. P. Krishnan\thanks{TIFR Centre for Applicable Mathematics, Bangalore, Karnataka, India
 (\email{vkrishnan@math.tifrbng.res.in})}
\and 
C. J. Nolan\thanks{Department of Mathematics and Statistics, University of Limerick, Ireland (\email{clifford.nolan@ul.ie})}
\and
 E. T. Quinto\thanks{Department of Mathematics,
Tufts University, Medford, MA, USA (\email{todd.quinto@tufts.edu})
}
}
\begin{document}

\maketitle

\begin{abstract}
In this article, we consider two bistatic cases arising in synthetic
aperture radar imaging: when the transmitter and receiver are both
moving with different speeds along a single line parallel to the
ground in the same direction or in the opposite directions. In both
cases, we classify the forward operator $\Fc$ as a Fourier integral
operator with fold/blowdown singularities.  Next we analyze the normal
operator $\Fc^*\Fc$ in both cases (where $\Fc^{*}$ is the
$L^{2}$-adjoint of $\Fc$).  When the transmitter and receiver move in the same
direction, we prove that $\Fc^*\Fc$ belongs to a class of
operators associated to two cleanly intersecting Lagrangians,
$I^{p,l} (\Delta, C_1)$.  When they move in opposite directions,
$\Fc^*\Fc$ is a sum of such operators. In both cases artifacts appear
and we show that they are, in general, as strong as the
bona-fide part of the image.  Moreover, we demonstrate that as soon as
the source and receiver start to move in opposite directions, there is
an interesting bifurcation in the type of artifact that appears in the
image.
\end{abstract}

\begin{keywords}
  Singular Fourier integral operators; Elliptical Radon
transforms; Synthetic Aperture Radar; Fold and Blowdown singularities
\end{keywords}

\begin{AMS}
  Primary 35S30, 35R30; Secondary 50J40
\end{AMS}

\section{Introduction}

\par  Synthetic Aperture Radar (SAR) is a high-resolution
imaging technology that uses antennas on moving platforms to send
electromagnetic waves to objects of interest and measures the
scattered echoes. These are then processed to form an image of the objects. For a good overview of SAR imaging, especially from a mathematical point of view, we refer the reader to \cite{Cheney2001,CheneyBordenbook}. In monostatic SAR imaging, the moving transmitter also acts as a receiver, whereas in bistatic SAR imaging, the transmitter and receiver are located on different platforms. 



Our focus in this article is on a bistatic SAR imaging setup,  where the
transmitter and receiver move along a straight line parallel to the
ground, in the same direction or in the opposite directions, and with
different speeds (see \eqref{def:GtGr}).  Here and in the rest of the
article we assume that the ground is represented by a plane. The SAR
imaging task is then mathematically equivalent to recovering the
ground reflectivity function $V$ from the measured data $\Fc
V$ for a certain period of time along each point of the receiver trajectory.  Since exact reconstruction of $V$ from $\Fc V$ is an extremely
difficult task, a reasonable compromise (acceptable in practice for
most applications) is to find the singularities of $V$ from $\Fc V$.
In particular, this will allow to see the edges (and hence shapes) of
the objects on the ground. Unfortunately even that simpler task may
not be completely accomplishable, since in certain setups $\Fc V$ may
not have enough information for correct recovery of singularities of
$V$. In these cases, the best possible reconstruction of $V$ may miss
certain parts of the original singularities, or have added ``fake''
singularities, called \textit{artifacts}.

In this article, the reconstructed images, including artifacts, are
analyzed using the calculus of singular Fourier integral operators
(FIOs). The forward operator $\Fc$ which maps singularities in the
scene to those in the data is an FIO. It is conventional to
reconstruct the image of an object by using the backprojection
operator, $\Fc^*$ applied to the data $\Fc V$. We study the
normal operator $\Fc^*\Fc$ and the artifacts which appear by using
this method.

The current article is a continuation of our prior work
\cite{AFKNQ:common_midpoint}, where, motivated by certain multiple
scattering scenarios, we considered the case when the transmitter and
receiver move at equal speeds away from a common midpoint along a
straight line. The main result of that article made precise the added
singularities and their strengths (in comparison to the true
singularities) when reconstruction is done using the backprojection
method mentioned above. We showed that the backprojection method
introduces three additional singularities for each true singularity
with potentially no way of avoiding them if the transmitter and
receiver are assumed omnidirectional. One of our main motivations in
studying the case of different speeds for the transmitter and receiver
was to remove some of these artifacts. However, when the transmitter
and receiver move away from each other, the backprojection method
still introduces additional artifacts, which in the limiting case
(when the speeds are equal) gives the artifacts considered in
\cite{AFKNQ:common_midpoint}.

The microlocal analysis of the normal operator in the study of generalized Radon transforms and in imaging problems  has a long history. Guillemin and Sternberg were the first to study integral geometry problems from the FIO and microlocal analysis point of view, and made fundamental contributions \cite{GuilleminSternberg, Gu1985}. Later, paired Lagrangian calculus introduced by Melrose-Uhlmann \cite{MU} and Guillemin-Uhlmann \cite{Guillemin-Uhlmann}, and also studied in  Antoniano-Uhlmann \cite{Antoniano-Uhlmann} was used by Greenleaf-Uhlmann in several of their highly influential works on the study of generalized Radon transforms \cite{GU1989,GU1990a}. Microlocal techniques have also been very useful in the context of seismic imaging \cite{Beylkin-1985, NolSym1997,tK-Smit-Verdel, Nolan-fold_caustics,Stolk_deHoop_CPAM,Dehoop-InsideOut, Felea-Greenleaf-MRL}),  in sonar imaging see  \cite{FG, Felea-Greenleaf-Pramanik, QRS2011}), in X-ray Tomography; in addition to works mentioned above also see
\cite{Quinto93, Katsevich:1997vo,FLU,Felea-Q2011, Frikel:2013gb}), and in tensor tomography \cite{SU1,SU2, Uhlmann-Vasy}.

The microlocal analysis of linearized SAR imaging operators (both monostatic and bistatic) was done in \cite{NC2004, YCY, RF1,AFKNQ:common_midpoint, Stefanov-Uhlmann-SAR}. Bistatic SAR imaging problems, due to the fact that the transmitter and receiver are spatially separated, naturally lead to the study of elliptical Radon transforms, which are also of independent interest. These have been studied in the literature as well \cite{Amb-Kri,ABKQ,Moon-Heo}.


The article is organized as follows: In Section
\ref{sect:main results} we state the main facts and results: the
positions of the transmitter and receiver that we consider
\eqref{def:GtGr}, the forward operator $\Fc$ \eqref{def:F}, the
canonical relation of $\Fc$ and the properties of the
projections from its canonical relation ($\pi_L$ and $\pi_R$)
(Theorems \ref{thm:F alpha>=0} and \ref{thm:G alpha<0}).  Then,
we describe the composition calculus results (Theorems \ref{thm:F*F
alpha>=0} and \ref{thm:G*G}).  \par In Section
\ref{sect:preliminaries} we recall briefly the definition of the
fold/blowdown singularities and the properties of the $I^{p,l}$
classes we need in this article, and Section
\ref{sect:common_midpoint} briefly summarizes the main result of
\cite{AFKNQ:common_midpoint}. \par In Section \ref{sect:F} we consider
the case when the transmitter and receiver are moving in the same
direction along a line parallel to the ground. We show that the
normal operator $\Fc^{*}\Fc$ has a distribution kernel belonging to
the paired Lagrangian distribution class $I^{2m,0}(\Delta, C_1)$ where
$\Delta$ is the diagonal relation and $C_1$ is the graph of a simple
reflection map about the $x$-axis. This result is valid even if the
transmitter is stationary, for example, when the transmitter is a
fixed radio tower and the receiver is a drone.

 In Section \ref{sect:alpha-} we consider the case when the
transmitter and receiver move in opposite directions along the
line, and the analysis becomes considerably more complicated.
To distinguish this case from the case in Section \ref{sect:F},
we denote the forward operator by $\Gc$. First of all, the
projections drop rank by one along two smooth disjoint hypersurfaces
$\Sigma_1 \cup \Sigma_2$.  Then, Theorem \ref{thm:G*G} shows that the
backprojection adds \emph{two} sets of artifacts and the normal
operator $\Gc^*\Gc$ is a sum of operators belonging to $I^{p,l}$
classes: $I^{2m,0}(\Delta, C_1) + I^{2m,0}(\Delta, C_2) +
I^{2m,0}(C_1, C_2)$, where $C_1$ causes the same artifact which
appears for $\A\geq 0$ in Section \ref{sect:F} and $C_2$ is a
two-sided fold (Def.\ \ref{def:fold}).  Finally, in section
\ref{sect:spotlighting} we consider spotlighting, in which certain
portions of the ground are selectively illuminated.  In this case, we
show that the normal operator $\Gc^*\Gc$ belongs to $I^{2m,0} (\Delta,
C_2)$ where $C_2$ is a two-sided fold canonical relation (see Theorem
\ref{thm:beamforming}).

 In Appendix \ref{proofs:iterated regularity}, we prove Theorem
\ref{thm:F*F alpha>=0} using the iterated regularity method and in
Appendix \ref{min-max-times}, we give a geometric explanation of the
points we cannot image in the case considered in Section
\ref{sect:alpha-}.

 In all these situations, we show that additional artifacts (coming
from $C_1$ and $C_2$) could be just as strong as the bona-fide part of
the image, in other words, singularities related to $\Delta$.  In this
article, for $\Fc^*\Fc \in I^{p,l}(\Delta, C)$, the strength of the
artifact ($C$) means the order of $\Fc^*\Fc$ on $C \setminus \Delta$.
We find the order of $\Fc^*\Fc$ on both $\Delta \setminus C$ and $C
\setminus \Delta$ and, if they are the same, then we conclude that in
general, the artifact is as strong (see Remarks \ref{remark:strength
alpha+} and \ref{remark:alpha- general}).

An obvious but perhaps important observation that follows from
the results of this paper is that if one has a choice of having the
source and receiver platforms moving in the same or opposite
directions  along the same straight line, then it is highly preferable to have them move in the same
direction in order to avoid additional set of artifacts,  other than the usual left-right ambiguities.

\section{Statements of the main results} \label{sect:main results}

\subsection{The linearized scattering model} \label{sect:prel-mod} For
simplicity, we assume that both the transmitter and receiver are at
the same height $h>0$ above the ground at all times and that the
transmitter and receiver move at constant but different speeds along
a line parallel to the $x$ axis.  Let
\bel{def:GtGr}\gamma_{T}(s)=(\alpha s,0,h)\qquad \gamma_{R}(s)=(s,0,h)
\ee for $s\in (0,\infty)$ be the trajectories of the transmitter and
receiver respectively.

The case $\alpha= -1$ corresponds to the common midpoint problem,
which was fully analyzed in \cite{AFKNQ:common_midpoint}. Therefore we
will assume $\alpha \neq -1$. We also assume $\alpha \neq 1$, since
 $\alpha=1$ corresponds to the monostatic case
 (where the same device serves as both a transmitter and a
receiver) and has also been fully analyzed in earlier works
\cite{NC2004, RF1}.

We are aware that there
are other cases for the transmitter and receiver to be considered,
like moving along parallel lines at different heights or along skew
lines at different speeds or along intersecting lines in a plane
parallel to the ground.  At this point we can only say that in those
cases the left-right ambiguity which appears in the case considered in
this article will, in general, be lost. However, we will limit
the analysis of $\Fc$ and $\Fc^*\Fc$ only to the case mentioned in
\eqref{def:GtGr} since it is already leads to interesting analysis. We
point out that, in practice, the flight paths can be more complicated
because of turbulence and other factors.

The linearized model for the scattered signal we will use in this
article is \[  \int e^{-\I
\omega\left(t-\frac{1}{c_{0}} R(s,x)\right)}a_0(s,x,\omega)V(x)
\D x \D \omega\] for $(s,t)\in (0,\infty)\times (0,\infty)$, where
$V(x)=V(x_{1},x_{2})$ is the function modeling the object on the
ground, and
\[R(s,x)=\norm{\gamma_{T}(s)-x}+\norm{x-\gamma_{R}(s)}
\] is the bistatic
distance--the sum of the distance from the transmitter to the
scatterer and from the scatterer to the receiver, $c_{0}$ is the speed
of electromagnetic wave in free-space and the amplitude term $a_0$ is
given by
\[
a_0(s,x,\omega)=\frac{\omega^{2}p(\omega)}{16\pi^{2}\norm{\gamma_{T}(s)-x}\norm{\gamma_{R}(s)-x}}.
\]
This function includes terms that take into account the transmitted
waveform and geometric spreading factors.

From now on, we denote the $(s,t)$ space by $Y=(0,\infty)^2$ and the
$(x_1,x_2)$ space by $X=\rtwo$.

For simplicity, we will assume that $c_{0}=1$.  Because the
ellipsoidal wavefronts do not meet the ground for \[t<
\sqrt{(\A-1)^2s^2+4h^2},\] there is no signal for such $t$.  As we
will see, our method cannot image the point on the ground directly
``between'' the transmitter and receiver (see the proof of Theorem
\ref{thm:F alpha>=0} in Section \ref{sect:F}).  Given transmitter and
receiver positions $\A s$ and $s$ respectively, such a point on the
ground has coordinates $\left(\frac{(\A+1)s}{2},0\right)$. Note that
this point on the $x$-axis corresponds to $t=\sqrt{(\A-1)^2s^2+4h^2}$.
For these two reasons, we multiply $a_{0}$ by a cutoff function $f$
that is zero in a neighborhood of \[\sparen{ (s,t)\st s>0,\ 0<t\leq
\sqrt{(\A-1)^{2}s^{2}+4h^{2}}}.\]

In addition, to be able to compose our forward
operator and its adjoint, we further assume that $f$ is compactly
supported and equal to $1$ in a neighborhood of a suitably large
compact subset of
\[
\{(s,t):s>0,\sqrt{(\A-1)^{2}s^{2}+4h^{2}}<t<\infty\}.
\]
 We let $f\cdot a_0=a$, and this gives us the data \bel{def:F}\Fc
V(s,t):=\int
e^{-i\omega\left(t-\norm{x-\gt(s)}-\norm{x-\gr(s)}\right)}
a(s,t,x,\omega) V(x) \D x \D \omega.\ee We require additional cutoffs
for our analysis to work for the case of $\alpha<0$ (see Remarks
\ref{S1S2cutoff} and \ref{rem:g(s,t)}).

Throughout the article we use the following notation
\bel{def:AB}\begin{aligned}
A&=A(s,x)=\norm{x-\gt(s)}=\sqrt{(x_1-\alpha s)^2+x_2^2+h^2}\\
B&=B(s,x) =
\norm{x-\gr(s)}=\sqrt{(x_1-s)^2+x_2^2+h^2}.\end{aligned}\ee
and we define the ellipse
\bel{def:E} E(s,t) = \sparen{x\in \rtwo\st A(s,x)+B(s,x)= t}\ee

We assume that the amplitude function $a \in S^{2}$, that is, it
satisfies the following estimate: For every compact $K \subset Y
\times X$ and for every non-negative integer $\delta$ and for every
$2$-index $\beta=(\beta_{1},\beta_{2})$ and $\lambda$, there is a
constant $c$ such that
\begin{equation}\label{amplitude estimate}
|\partial_{\omega}^{\delta}\partial_{s}^{\beta_{1}}\partial_{t}^{\beta_{2}}\partial_{x}^{\lambda}a(s,t,x,\omega)|\leq
c(1+|\omega|)^{2-\delta}.
\end{equation}
This assumption is satisfied if the transmitted waveform from the
antenna is approximately a Dirac delta distribution.
The  qualitative features
predicted by the approach based on microlocal analysis are consistent with practical reconstructions,
including for example, the well-known right-left ambiguity artifact in
low-frequency SAR images \cite{CheneyBordenbook}.

\subsection{Transmitter and receiver moving in the same direction:
$\A\geq 0$}\label{sect:results alpha+}

 The case $\A\geq 0$ corresponds to the situation when the transmitter
and receiver are traveling in the same direction along a line parallel
to the ground or when the transmitter is stationary ($\alpha = 0$) on
that line. For $\A\geq 0$, we refer to the forward operator by $\Fc$.
We show that for the case $\A\geq 0$, the operator $\Fc$ in
\eqref{def:F} is a FIO of order $\frac{3}{2}$ and study the properties
of the natural projection maps from the canonical relation of $\Fc$.
We have the following results.

\vskip .5 cm
 \begin{theorem}\label{thm:F alpha>=0} Let $\Fc$ be the operator in
 \eqref{def:F} for $\alpha\geq 0$.
 \begin{enumerate}
 \item $\Fc$ is an FIO of order $3/2$.
 \item The canonical relation $\CF\subset T^*Y
\smo \times T^*X\smo$  associated to $\Fc$ is given by
\bel{def:Canonical relation}\begin{aligned}
\CF =\Bigg\{&\Big( s,t, -\omega\left(\frac{x_1-
\alpha s}{A} \alpha+\frac{x_1-s}{B}\right), \omega;\\
&x_1,x_2, \omega\left(\frac{x_1- \alpha s}{A}+\frac{x_1-s}{B}\right),
\omega
\left(\frac{x_2}{A}+\frac{x_2}{B}\right)\Big)\\
&\st \om\neq 0,\ t=A+B
\Bigg\}.
\end{aligned}\ee where $A=A(s,x)$ and $B=B(s,x)$ are defined in
\eqref{def:AB}.  Furthermore,
$(s,x_{1},x_{2},\omega)$ is a global parameterization of $\CF$.
\item Denote the left and right projections from $\CF$
to $T^{*}Y \smo$ and
$T^{*}X \smo$  by $\pi_{L}$ and $\pi_{R}$ respectively. Then $\pi_{L}$ and $\pi_{R}$ drop
rank simply by one on the set
\bel{def:Sigma1}\Sigma_1=\left\{ (s,x_{1},x_{2},\omega) \in \CF \st x_{2}=0 \right\}.\ee
  \item $ \pi_L$ has a fold singularity along
$\Sigma_1$ and $ \pi_{R}$ has a blowdown singularity along
$\Sigma_1$
(see Def.\
\ref{def:fold-blowdown}).
\end{enumerate}
\end{theorem}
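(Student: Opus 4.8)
The plan is to build everything on the phase function $\phi(s,t,x,\om)=-\om\bigl(t-A(s,x)-B(s,x)\bigr)$ occurring in \eqref{def:F}, which has the single fibre variable $\om\neq0$. I would first note that it is nondegenerate: $\partial_\om\phi=-(t-A-B)$ cuts out $\{t=A+B\}$, and $\partial_t\partial_\om\phi=-1\neq0$, so $d_{(s,t,x,\om)}\partial_\om\phi$ never vanishes on the critical set. With the amplitude $a\in S^{2}$ (this is \eqref{amplitude estimate}), one phase variable, and $\dim Y=\dim X=2$, the FIO has order $2+\tfrac12-\tfrac{2+2}{4}=\tfrac32$, giving (1). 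For (2), I would compute $\CF=\{(s,t,d_{(s,t)}\phi;\,x,-d_x\phi):\partial_\om\phi=0\}$ directly by differentiating $A,B$ from \eqref{def:AB}; this yields \eqref{def:Canonical relation}. That $(s,x_1,x_2,\om)$ is a global parametrisation is immediate: from a point of $\CF$ one recovers $(x_1,x_2)$ from the base of $\pi_R$, $s$ from the base of $\pi_L$, $\om$ from the $t$-component of the left covector, and $t=A+B$. Finally, the cutoff $f$ — which kills a neighbourhood of $t=\sqrt{(\A-1)^2s^2+4h^2}$, i.e.\ of the ground point $\bigl(\tfrac{(\A+1)s}{2},0\bigr)$ sitting ``between'' transmitter and receiver — is precisely what makes the right covector $\bigl(\om(\tfrac{x_1-\A s}{A}+\tfrac{x_1-s}{B}),\,\om(\tfrac{x_2}{A}+\tfrac{x_2}{B})\bigr)$ nonzero on the relevant part of $\CF$, so that $\CF\subset T^*Y\smo\times T^*X\smo$.

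For (3) and (4) I would work in the global coordinates $(s,x_1,x_2,\om)$, writing $(\xi_s,\xi_t)$ and $(\xi_{x_1},\xi_{x_2})$ for the covector components in \eqref{def:Canonical relation}, and compute the $4\times4$ Jacobians of $\pi_L$ and $\pi_R$. Using $\partial_sA=-\A(x_1-\A s)/A$, $\partial_{x_1}\tfrac{x_1-\A s}{A}=(x_2^2+h^2)/A^3$, $\partial_s\tfrac{x_1-\A s}{A}=-\A(x_2^2+h^2)/A^3$, $\partial_{x_2}A=x_2/A$ and the analogues for $B$, one sees that every $x_2$-derivative of $A+B$ and of the fibre components carries an explicit factor of $x_2$; a cofactor expansion then shows that $\det d\pi_L$ and $\det d\pi_R$ each equal, up to a nonzero constant,
\[
\om\,x_2\left(\frac{\A}{A^2}+\frac1{B^2}\right)\frac{AB+u\cdot v}{AB},\qquad u=(x_1-\A s,\,x_2,\,h),\quad v=(x_1-s,\,x_2,\,h).
\]
Since $\A\ge0$, the factor $\A/A^2+1/B^2$ is positive; and $AB+u\cdot v>0$ because $|u\cdot v|\le|u||v|=AB$ with equality ruled out (both $u$ and $v$ have third coordinate $h>0$, hence are never anti-parallel). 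Therefore both determinants vanish precisely on $\Sigma_1=\{x_2=0\}$ and only to first order, so $\pi_L$ and $\pi_R$ drop rank by exactly one there and are local diffeomorphisms off $\Sigma_1$; this is (3).

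For (4), I would compute the kernels of $d\pi_L$ and $d\pi_R$ along $\Sigma_1$. For $\pi_L$, the outputs $s$ and $\xi_t=\om$ force the $\partial_s$- and $\partial_\om$-components of a kernel vector to vanish, after which the $\xi_s$-output forces the $\partial_{x_1}$-component to vanish too, because on $\Sigma_1$ one has $\partial_{x_2}\xi_s=0$ while $\partial_{x_1}\xi_s$ is a nonzero multiple of $\om h^2(\A/A^3+1/B^3)$; thus $\ker d\pi_L|_{\Sigma_1}=\operatorname{span}\{\partial_{x_2}\}$, which is transversal to $T\Sigma_1=\ker dx_2$, so $\pi_L$ is a fold. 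For $\pi_R$, the outputs $x_1,x_2$ force the $\partial_{x_1}$- and $\partial_{x_2}$-components to vanish, and since $\xi_{x_2}$ together with its $s$- and $x_1$-derivatives all vanish on $\Sigma_1$, the only surviving relation comes from the $\xi_{x_1}$-output and yields the one-dimensional kernel $\{v_{x_1}=v_{x_2}=0,\ v_s\,\partial_s\xi_{x_1}+v_\om\,\xi_{x_1}/\om=0\}$ — one-dimensional because $\partial_s\xi_{x_1}|_{\Sigma_1}$ is a nonzero multiple of $\om h^2(\A/A^3+1/B^3)$ — which lies inside $T\Sigma_1$; hence $\pi_R$ is a blowdown (one also checks, using the same nonvanishing, that $\pi_R|_{\Sigma_1}$ has rank $2$). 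Verifying the precise conditions in Def.~\ref{def:fold-blowdown} then finishes the proof.

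The step I expect to be the main obstacle is organising the Jacobian computation in (3) into the clean factored form above, and in particular verifying that the factor other than $x_2$ is genuinely nowhere zero on the support of $a$ — the one geometric input that makes this work, and that dictates the form of the resulting artifact, being that the vectors from a scatterer to the transmitter and to the receiver are never anti-parallel because both platforms move at the same positive height $h$; this is also exactly why the point on the ground directly ``between'' the two platforms cannot be imaged and must be excised by $f$.
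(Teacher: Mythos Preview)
Your proposal is correct and follows essentially the same route as the paper's proof: same phase function and order computation, the same factored Jacobian determinant $\om\,x_2\bigl(\tfrac{\alpha}{A^2}+\tfrac{1}{B^2}\bigr)\bigl(1+\tfrac{(\gamma_T-x)\cdot(\gamma_R-x)}{AB}\bigr)$ (your $u\cdot v$ equals the paper's $(\gamma_T-x)\cdot(\gamma_R-x)$), the same geometric reason that the last factor never vanishes (the common height $h>0$ prevents anti-parallelism), and the same identification of $\ker d\pi_L|_{\Sigma_1}=\operatorname{span}\{\partial_{x_2}\}$ and $\ker d\pi_R|_{\Sigma_1}\subset\operatorname{span}\{\partial_s,\partial_\omega\}$. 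Your treatment of the $\pi_R$ kernel is in fact a bit more explicit than the paper's, which simply states that the kernel is spanned by $\partial_s$ and $\partial_\omega$ without writing the single linear relation between them.
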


We next analyze the imaging operator $\Fc^{*} \Fc $.

 \begin{theorem}\label{thm:F*F alpha>=0} Let $\Fc$ be as in Theorem
\ref{thm:F alpha>=0}. Then $\Fc^*\Fc \in I^{3,0}(\Delta, C_1)$, where
\bel{def:C1}C_1=\{(x_1, x_2, \xi_1, \xi_2; x_1, -x_2, \xi_1,
-\xi_2)\st (x,\xi)\in T^{*}X \smo \} \ee which is the graph of
$\chi_1(x,\xi)=(x_1, -x_2, \xi_1, -\xi_2)$.
\end{theorem}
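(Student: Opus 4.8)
The plan is to derive $\Fc^*\Fc \in I^{3,0}(\Delta, C_1)$ from the structure of $\CF$ established in Theorem~\ref{thm:F alpha>=0}, by first computing the composition $\CF^t \circ \CF$ set-theoretically and then upgrading the set-theoretic statement to the claim that the kernel lies in the paired-Lagrangian class. Since $\pi_L$ has a fold and $\pi_R$ has a blowdown along $\Sigma_1$, the composition $\CF^t\circ\CF$ is not transverse, so the Guillemin--Sternberg clean-intersection calculus does not directly apply; instead one expects $\CF^t\circ\CF = \Delta \cup C_1$ with the two Lagrangians intersecting cleanly in codimension one. The first step is therefore to parameterize $\CF^t\circ\CF$ explicitly: take two points $(s,x,\xi)$ and $(s',x',\xi')$ of $\CF$ (in the global parameterization of part~(2)) that have the same image under $\pi_L$, i.e.\ the same $(s,t,\sigma,\tau)\in T^*Y$, and show that this forces either $(x',\xi')=(x,\xi)$ (giving $\Delta$) or $(x',\xi')=\chi_1(x,\xi)=(x_1,-x_2,\xi_1,-\xi_2)$ (giving $C_1$). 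Concretely, equality of the $t$-components forces $A(s,x)+B(s,x)=A(s,x')+B(s,x')$, equality of the $\tau$-component ($=\omega=\omega'$) fixes the frequency, and equality of the $\sigma$-component together with the two $\xi$-equations pins down $x'_1=x_1$ and $x'^2_2=x_2^2$; the left-right reflection $x_2\mapsto -x_2$ leaves $A$, $B$, and all the relevant derivatives invariant, which is exactly why the artifact $C_1$ appears. This is the manifestation of the classical right-left ambiguity in SAR.

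Next I would verify the two structural facts needed to invoke the $I^{p,l}$ calculus: that $\Delta$ and $C_1$ are cleanly intersecting Lagrangians in $T^*X\times T^*X$ meeting in codimension one (their intersection is $\{x_2=0,\ \xi_2=0\}\times\{x_2=0,\ \xi_2=0\}$, a standard computation on the explicit formulas for $\Delta$ and $C_1$ in \eqref{def:C1}), and that $C_1$ is indeed the graph of the canonical transformation $\chi_1$, which is immediate from its defining formula since $\chi_1$ is its own inverse and clearly symplectic. With the set-theoretic composition and the clean-intersection geometry in hand, the order count follows from the standard rule for composing FIOs associated to fold/blowdown canonical relations: $\Fc$ has order $3/2$ as an ordinary FIO, but the fold of $\pi_L$ and blowdown of $\pi_R$ cost one extra half-derivative on the fold hypersurface, and composing $\Fc^*$ with $\Fc$ produces a kernel in $I^{2m,0}(\Delta,C_1)$ with $2m$ determined by $2\cdot\tfrac32 - \tfrac{\dim Y - \dim X}{2}$-type bookkeeping together with the fold contribution, yielding the orders $(3,0)$ stated; here one uses the convention, recalled in Section~\ref{sect:preliminaries}, under which $I^{p,l}(\Delta,C)$ has order $p$ on $\Delta\setminus C$ and $p+l$ on $C\setminus\Delta$.

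The cleanest rigorous route to the $I^{p,l}$ membership — rather than hand-waving the order count — is the \emph{iterated regularity} characterization of paired Lagrangian distributions: one shows that the Schwartz kernel $K$ of $\Fc^*\Fc$ satisfies $P_1\cdots P_N K \in L^2_{loc}$ (in the appropriate Sobolev-based sense) for every finite collection of first-order pseudodifferential operators $P_j$ whose principal symbols vanish on $\Delta\cup C_1$. This is precisely the method the authors announce for Appendix~\ref{proofs:iterated regularity}, so for this theorem I would reduce to that: compute the oscillatory-integral representation of $K$ by stationary phase in $\omega$ (and in the $s$-variable away from $\Sigma_1$, where $\pi_L,\pi_R$ are local diffeomorphisms, recovering the usual $\Delta$-contribution) and then, microlocally near $\Sigma_1$, change coordinates so that the phase of $K$ is reduced to the model paired-Lagrangian phase for a fold/blowdown pair; the iterated-regularity estimates then hold by the symbol bound \eqref{amplitude estimate}.

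\textbf{The main obstacle.} The hard part is the microlocal analysis near $\Sigma_1$, i.e.\ where $x_2=0$. There the map $\pi_L$ is a fold and $\pi_R$ a blowdown, so the usual composition-of-FIOs argument breaks down: the fiber of $\CF^t\circ\CF$ over a diagonal point collapses onto the artifact Lagrangian, and one must show that the resulting singularity is exactly of paired-Lagrangian type (neither worse, e.g.\ a genuinely singular kernel, nor already smoothed out) and compute its precise order. This requires either (a) carefully putting the phase of the $\Fc^*\Fc$ oscillatory integral into a normal form for a fold/blowdown composition — tracking how the critical manifold of the $s$-integration degenerates along $x_2=0$ — or (b) running the iterated-regularity argument uniformly up to $\Sigma_1$, which is the delicate estimate deferred to the appendix. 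Everything away from $\Sigma_1$ is routine clean-composition; the content of the theorem is concentrated at the fold.
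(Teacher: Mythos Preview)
Your overall strategy matches the paper's: first show $\CF^t\circ\CF\subset\Delta\cup C_1$ (the paper does this as Proposition~\ref{Wavefront of composition I}, using prolate spheroidal coordinates \eqref{Prolate coordinate system} to solve the system \eqref{Isorange curves}--\eqref{Isodoppler curves}), then invoke iterated regularity (Theorem~\ref{thm:iterated regularity}) with explicit generators $\tilde p_1,\dots,\tilde p_6$ of the ideal vanishing on $\Delta\cup C_1$ (listed in \eqref{generators:F}), and finally read off the orders $(3,0)$ by the same argument as in \cite{AFKNQ:common_midpoint}.

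There is one genuine slip you should correct. You claim $\Delta$ and $C_1$ intersect cleanly in \emph{codimension one}, but the intersection $\{x_2=0,\ \xi_2=0\}$ imposes \emph{two} independent conditions on the four-dimensional $\Delta$ (or $C_1$), so the codimension is $2$, exactly as stated in Proposition~\ref{Wavefront of composition I}. This is not cosmetic: the model $I^{p,l}$ setup you recall from Section~\ref{sect:preliminaries} (Example~\ref{example:basic}, Definition~\ref{def:GU}) is for codimension-$1$ clean intersections, and the codimension-$2$ case requires the extension handled in \cite{AFKNQ:common_midpoint} and \cite{RF1}. Your order bookkeeping and your description of the iterated-regularity step are otherwise on target, though note two small inaccuracies: the stationary phase that produces the kernel \eqref{Kernel of composed operator} is in $(t,\tilde\omega)$, not in $\omega$ (the surviving phase variables are $(s,\omega)$); and the iterated-regularity criterion lands you in $H^{s_0}_{loc}$ for some $s_0$, not $L^2_{loc}$. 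The concrete content of the appendix is precisely to express each $\tilde p_i$ as a smooth combination of $\partial_\omega\Phi$ and $\partial_s\Phi$, which is the step you describe only in outline.
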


  \begin{remark}\label{remark:strength alpha+} Since $\Fc^*\Fc \in
I^{3,0} (\Delta, C_1)$, using the properties of the $I^{p,l}$ classes
\cite{Guillemin-Uhlmann}, we have that microlocally away from $C_1$,
$\Fc^*\Fc$ is in $I^{3} (\Delta \setminus C_1)$ and microlocally away
from $\Delta$, $\Fc^*\Fc \in I^{3} (C_1 \setminus \Delta)$.  This
means that $\Fc^*\Fc$ has the same order on both $\Delta\setminus C_1$
and $C_1\setminus \Delta$ which implies that the artifacts caused by
$C_1\setminus \Delta$ will, in general, have the same order as
the reconstruction of the singularities in $V$ that cause them
(see the comments below Def.\ \ref{def:GU}). However, more complicated
behavior can occur including smoothing or cancellation of artifacts.
\end{remark}

\subsection{Transmitter and receiver moving in opposite directions:
$\A<0$}\label{sect:results alpha-}

When $\A<0$, the transmitter and receiver travel away from each other,
and we refer to the forward operator by $\Gc$.

\subsubsection{Further preliminary modifications of the scattered
data} In the case when $\A<0$, we further modify the operator $\Fc V$
considered in Section \ref{sect:prel-mod}.

Our method cannot image a neighborhood of two points on
the ground for a given transmitter and receiver positions in
addition to the points muted by the cutoff function $f$ in Section \ref{sect:prel-mod}. Therefore we modify or
pre-process the receiver data further such that the contribution to it
from a neighborhood of these two points is $0$.
The two points on the
$x_{1}$-axis that we would like to avoid are of the form $\lb
x_{1}^{\pm},0\rb $, where
\begin{align}
& x_{1}^{+}=\frac{2\A s}{\A+1}+\sqrt{-\A
\frac{(\A-1)^{2}}{(\A+1)^{2}}s^{2}-h^{2}},\label{Avoided x1
values +} \\
& x_{1}^{-}=\frac{2\A s}{\A+1}-\sqrt{-\A
\frac{(\A-1)^{2}}{(\A+1)^{2}}s^{2}-h^{2}}\label{Avoided x1 values -}
\end{align} as explained in Remark \ref{S1S2cutoff}.
We define a smooth mute function $g(s,t)$ that is identically $0$ if
the ellipse $E(s,t)$ is near one of the points $(x_1^\pm,0)$; for each
$s$, the corresponding values of $t$ are \bel{def:tpm}
t^{\pm}_{s}=A\paren{s,(x_{1}^{\pm},0)}+B\paren{s,(x_{1}^{\pm},0)} \ee
where $A$ and $B$ are given by \eqref{def:AB}. The points $t^{\pm}_s$
are given explicitly in Appendix \ref{min-max-times}.

With the function $g$, we modify $\Fc$ in \eqref{def:F} by replacing
$a$ by $g\cdot a$ and call it $a$ again. Throughout this section,
corresponding to the case $\A<0$, we will designate the operator as
$\Gc$. That is, we have \Beq\label{def:G} \Gc V(s,t):=\int
e^{-i\omega\left(t-\norm{x-\gt(s)}-\norm{x-\gr(s)}\right)}
a(s,t,x,\omega) V(x) \D x \D \omega, \Eeq where $a$ takes into account
the cutoff functions $f$ in Section \ref{sect:prel-mod} and the
function $g$ defined in the last paragraph.

\begin{theorem}\label{thm:G alpha<0} Let $\Gc
$ be the operator given in \eqref{def:G} for $\alpha<0$. Then
\begin{enumerate}
\item $\Gc$ is an FIO of order $\frac{3}{2}$ \item The canonical
relation $\CG$ associated to $\Gc$ is given by \eqref{def:Canonical
relation} with global parameterization $(s,x_{1},x_{2},\omega)$.

\item The left and right projections $\pi_{L}$ and $\pi_{R}$
respectively from $\CG$ drop rank simply by one on the set $\Sigma
=\Sigma_{1}\cup \Sigma_{2}$ where $\Sigma_1$ is given by
\eqref{def:Sigma1} and
\begin{align}\Sigma_2&=\left\{ (s,x, \omega) \in \CG \st
\frac{\alpha}{A^{2}}+ \frac{1}{B^{2}}=0, \ x_2 \neq 0
\right\}\label{def:Sigma2}\\
& =\Bigg{\{}(s,x, \omega) \in \CG \st \left( x_1- \frac{2 \alpha
s}{\alpha +1}\right)^2+x_2^2= -\alpha s^2\frac{(\alpha-1)^2}{(\alpha
+1)^2}-h^2, \ x_2 \neq 0 \Bigg{\}} \label{Sigma2-circle}
\end{align}
\item $ \pi_L$ has a fold singularity along $\Sigma$ (see Def.\
\ref{def:fold-blowdown}).  \item $ \pi_{R}$ has a blowdown
singularity along $\Sigma_1$ and a fold singularity along $\Sigma_2$
(see Def.\ \ref{def:fold-blowdown}).  \end{enumerate} \end{theorem}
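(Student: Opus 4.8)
The plan is to analyze the two projections $\pi_L$ and $\pi_R$ from the canonical relation $\CG$ directly in the global parameterization $(s,x_1,x_2,\omega)$, compute the Jacobian of each projection, identify the critical locus, and then read off the type of singularity (fold or blowdown) from the structure of the kernel of the differential along that locus. Concretely, I would first write down the Jacobian matrices of $\pi_L$ and $\pi_R$ in the coordinates $(s,x_1,x_2,\omega)$ on $\CG$ and the obvious coordinates on $T^*Y$ and $T^*X$. Since $\pi_R$ is projection onto $(x_1,x_2,\xi_1,\xi_2)$ with $\xi_1=\omega(\frac{x_1-\alpha s}{A}+\frac{x_1-s}{B})$ and $\xi_2=\omega x_2(\frac1A+\frac1B)$, only $s$ among the coordinates is not a coordinate on $X$-side, so $\det d\pi_R$ is (up to a nonzero factor) $\partial_s\xi_1$ restricted appropriately; a short computation shows this vanishes exactly where $x_2=0$ or where $\frac{\alpha}{A^2}+\frac1B^2=0$, i.e. on $\Sigma_1\cup\Sigma_2$. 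For $\pi_L$, projection onto $(s,t,\sigma,\tau)=(s,A+B,\xi_{1,Y},\omega)$, the analogous computation gives $\det d\pi_L$ proportional to $\partial_{x_2}$ of the $\sigma$-component times a nonvanishing factor, and this again cuts out $\Sigma_1\cup\Sigma_2$. This establishes part (3), that both drop rank by exactly one (the rank drop is \emph{simple}, i.e. the Hessian of the defining function is nondegenerate transverse to $\Sigma$) on $\Sigma=\Sigma_1\cup\Sigma_2$.

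Next, for parts (4) and (5) I would check the fold/blowdown dichotomy from Definition \ref{def:fold-blowdown} at each piece of $\Sigma$ separately. For a fold one needs: the map drops rank by one, the differential of $\det$ is nonzero along the critical set (so the critical set is a smooth hypersurface), and the kernel of the differential is transverse (one-dimensional and not tangent) to the critical hypersurface. For a blowdown one needs the kernel vector to be instead \emph{tangent} to the critical hypersurface (in fact, for the strict notion, the kernel is tangent and the map collapses a foliation). So the computation is: find the null vector $v$ of $d\pi_L$ (respectively $d\pi_R$) at a point of $\Sigma_1$ and at a point of $\Sigma_2$, compute $v(\det d\pi)$ or equivalently check whether $v$ is tangent to $\{\det d\pi=0\}$. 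For $\pi_L$ I expect the null direction to be transverse to both components $\Sigma_1$ and $\Sigma_2$, giving a fold along all of $\Sigma$. For $\pi_R$, along $\Sigma_1=\{x_2=0\}$ the null vector should be $\partial_{x_2}$ (or something proportional), which is \emph{tangent} to $\{x_2=0\}$ — this is the classical blowdown responsible for the left–right ambiguity, and the argument is essentially the same as in the $\alpha\ge0$ case of Theorem \ref{thm:F alpha>=0} since $\Sigma_1$ and the relevant reflection are unchanged. Along $\Sigma_2$, by contrast, $\Sigma_2$ is the circle $(x_1-\frac{2\alpha s}{\alpha+1})^2+x_2^2=-\alpha s^2\frac{(\alpha-1)^2}{(\alpha+1)^2}-h^2$ with $x_2\ne0$, and I expect the null vector of $d\pi_R$ there to point transversally off that circle, giving a fold.

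The cleanest way to organize the fold verification is to choose adapted coordinates near a point of $\Sigma_1$ (resp. $\Sigma_2$) so that the critical set is a coordinate hyperplane, then express $\pi_L$ and $\pi_R$ in Whitney normal form up to diffeomorphism: a fold looks like $(u_1,\dots,u_{n-1},u_n^2)$ and a blowdown looks like $(u_1,\dots,u_{n-1},u_1 u_n)$ in suitable coordinates (cf. Definition \ref{def:fold-blowdown}). Rather than produce the full change of coordinates, I would verify the equivalent infinitesimal criteria: (i) $\operatorname{corank} d\pi = 1$ on $\Sigma$; (ii) $d(\det d\pi)\ne 0$ on $\Sigma$; (iii) the kernel line of $d\pi$ is transverse to $T\Sigma$ (fold) or contained in $T\Sigma$ (blowdown), in the latter blowdown case together with the additional degeneracy that $d\pi$ has a two-dimensional kernel when restricted to $T\Sigma$ — equivalently that the single null direction stays null as one moves along $\Sigma$. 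These are checkable by explicit (if slightly tedious) linear algebra in the $(s,x_1,x_2,\omega)$ parameterization using the formulas for $A,B$ in \eqref{def:AB}.

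The main obstacle I anticipate is the bookkeeping at $\Sigma_2$: unlike $\Sigma_1=\{x_2=0\}$, the set $\Sigma_2$ is defined by the transcendental relation $\frac{\alpha}{A^2}+\frac1{B^2}=0$, so differentiating $\det d\pi_R$ along $\Sigma_2$ and checking that the null vector is transverse requires carefully using the constraint to simplify; one has to be sure the apparent vanishing of higher-order terms does not secretly upgrade the fold to a blowdown or a worse (e.g. cusp) singularity. A useful sanity check is the degeneration $\alpha\to-1$: $\Sigma_2$ should then move off to infinity (the radius $-\alpha s^2\frac{(\alpha-1)^2}{(\alpha+1)^2}-h^2\to\infty$), consistently with $\Sigma_2$ not appearing in the common-midpoint analysis of \cite{AFKNQ:common_midpoint}; and $\alpha\to 0$ (stationary transmitter) should reduce the behavior of $\pi_R$ on $\Sigma_1$ to the blowdown of Theorem \ref{thm:F alpha>=0}. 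These limits both guide and corroborate the computation.
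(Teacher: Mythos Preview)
Your overall strategy---compute the Jacobians of $\pi_L$ and $\pi_R$ in the global coordinates $(s,x_1,x_2,\omega)$, locate the critical set as $\Sigma_1\cup\Sigma_2$, and then test whether the kernel direction is tangent or transverse to $\Sigma$---is precisely what the paper does, and the parts concerning $\Sigma_2$ are on the right track.  There is, however, a concrete error in your treatment of $\Sigma_1$ that would derail the argument.

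You claim that on $\Sigma_1=\{x_2=0\}$ the null vector of $d\pi_R$ is $\partial_{x_2}$, ``which is tangent to $\{x_2=0\}$.''  Both parts of this are wrong.  First, $\pi_R$ has $x_1,x_2$ among its output coordinates, so the first two rows of $d\pi_R$ are the identity on the $x_1,x_2$ columns; hence $\partial_{x_1}$ and $\partial_{x_2}$ can never lie in $\ker d\pi_R$.  The actual kernel on $\Sigma_1$ is a nontrivial combination of $\partial_s$ and $\partial_\omega$, obtained from the $2\times2$ block $\partial_{(s,\omega)}(\xi_1,\xi_2)$.  Second, $\partial_{x_2}$ is \emph{transverse}, not tangent, to $\{x_2=0\}$.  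It is $\partial_s$ and $\partial_\omega$ that are tangent to $\{x_2=0\}$, and this is exactly why $\pi_R$ is a blowdown there.  (By contrast, $\partial_{x_2}$ is the kernel direction for $d\pi_L$ on $\Sigma_1$, and its transversality to $\{x_2=0\}$ is what makes $\pi_L$ a fold.)  If you carried out your stated plan literally you would either get the right answer for two wrong reasons that cancel, or---if you correctly noticed $\partial_{x_2}\notin T\Sigma_1$---conclude that $\pi_R$ folds on $\Sigma_1$, which is false.

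Two smaller points.  Your extra condition on blowdowns (``$d\pi$ has a two-dimensional kernel when restricted to $T\Sigma$'') is not part of the definition and is not satisfied here; the kernel is one-dimensional and one only checks $\ker d\pi\subset T\Sigma$.  Your $\alpha\to-1$ sanity check is also off: in that limit $\frac{\alpha}{A^2}+\frac{1}{B^2}=0$ becomes $A=B$, i.e.\ the line $x_1=0$, so $\Sigma_2$ does not disappear but degenerates to a line---which is exactly the source of the additional reflection artifact $\Lambda_2$ in the common-midpoint analysis of Section~\ref{sect:common_midpoint}.
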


 For convenience, we denote, for each $s$, the projection of the part
of $\Sigma_2$ above $s$ to $\rtwo$ (the projection to the
base of $\pi_R\paren{{\Sigma_2}{\big|_{s}}}$) by $\Stwox(s)$, and
this is the circle described in \eqref{Sigma2-circle} and in Appendix
\ref{min-max-times}.  It can be
written \bel{def:Stwox}\Stwox(s) = \lsp x \st
\frac{\alpha}{A^{2}(s,x)}+ \frac{1}{B^{2}(s,x)}=0, \ x_2 \neq 0
\rsp.\ee

\begin{remark}\label{S1S2cutoff} From Equation \eqref{Sigma2-circle} we
have that
 $\Stwox(s)$ is a
circle of radius $\sqrt{-\alpha s^2\frac{(\alpha-1)^2}{(\alpha
+1)^2}-h^2}$ and centered at $(2\alpha s/(\alpha+1),0)$.

Now we can explain why we need to cutoff the data for ellipses near
the two points given by \eqref{Avoided x1 values +}-\eqref{Avoided x1
values -}.  Since $\pi_R(\Sigma_1)$ intersects $\pi_R(\Sigma_2)$ above
these two points, $\pi_R$ drops rank by two above these points.  So,
we mute data near $(s,t_s^\pm)$ given by \eqref{def:tpm}.  We will
precisely describe this mute, $g$ in Remark \ref{rem:g(s,t)}.
\end{remark}

We now analyze the imaging operator $\Gc^{*}\Gc$. Unlike the case
$\A\geq 0$, this case is more complicated and we consider several
restricted transforms.

\begin{theorem}\label{thm:G*G} Let $ \A \leq 0$ and $\A\neq -1$.  Let $\Gc$ be the
operator in \eqref{def:G} and let
\bel{def:s0-alpha}\so=\frac{h(\A+1)}{\sqrt{-\A}(\A-1)}\ee Then the
following hold:
\begin{enumerate}
\item\label{O1} Let $O_1= \{(s,t): 0<s<\so$ and $0<t<\infty\}$ and let
 $r_{1}$ be a smooth cutoff function that is
compactly  supported in $O_1$.  Consider the
operator $\Gc$ defined in \eqref{def:G} with the amplitude function
$a$ replaced by $r_{1}\cdot a$. Then $\Gc^{*}\Gc\in
I^{3,0}(\Delta,C_{1})$ where $C_{1}$ is defined in \eqref{def:C1}.

\item\label{O2} Let $O_2=\{(s,t): \so<s<\infty$ and
$t_{s}^{-}<t<t_{s}^{+}\}$ where $t_{s}^{\pm}$ is defined in
\eqref{def:tpm}.  Let $r_{2}$ be a smooth cutoff function and
compactly supported in $O_2$. Consider the operator $\Gc$ defined in
\eqref{def:G} with the amplitude function $a$ replaced by $r_{2}\cdot
a$. Then $\Gc^{*}\Gc\in I^{3,0}(\Delta,C_{1})+ I^{3,0}(\Delta,C_{2})+
I^{3,0}(C_{1},C_{2})$ where $C_{2}$ is a two-sided fold given by
\eqref{def:C2}.

\item\label{O3} Let $O_3= \{(s,t): \so<s<\infty$ and $t<t_{s}^{-}$ or
$t>t_{s}^{+}\}$ with $t_{s}^{\pm}$ defined in \eqref{def:tpm}.  Let
$r_{3}$ be a smooth cutoff function compactly supported in $O_3$.
Consider the operator $\Gc$ defined in \eqref{def:G} with the
amplitude function $a$ replaced by $r_{3}\cdot a$. Then $\Gc^{*}\Gc\in
I^{3,0}(\Delta,C_{1})$.
\end{enumerate}
\end{theorem}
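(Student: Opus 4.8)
The plan is to analyze $\Gc^*\Gc$ separately over the three regions $O_1$, $O_2$, $O_3$ using the canonical-relation structure from Theorem \ref{thm:G alpha<0}, exploiting the fact that these regions are distinguished precisely by which pieces of $\Sigma = \Sigma_1 \cup \Sigma_2$ lie above them. The key geometric observation, to be established first, is that for $s < \so$ the circle $\Stwox(s)$ from \eqref{def:Stwox} is empty (the radius $\sqrt{-\alpha s^2 (\alpha-1)^2/(\alpha+1)^2 - h^2}$ is imaginary), so over $O_1$ only $\Sigma_1$ is present; for $s > \so$ the circle is a genuine circle and the band $t_s^- < t < t_s^+$ is exactly the set of $(s,t)$ for which the ellipse $E(s,t)$ meets $\Stwox(s)$, so over $O_2$ both $\Sigma_1$ and $\Sigma_2$ contribute, while over $O_3$ the ellipse misses $\Stwox(s)$ and again only $\Sigma_1$ is present. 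This reduces parts (\ref{O1}) and (\ref{O3}) to the same analysis as the $\A \geq 0$ case.

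For parts (\ref{O1}) and (\ref{O3}): with the cutoff $r_1$ (resp.\ $r_3$) the operator $\Gc$ has canonical relation $\CG$ restricted to a piece where $\pi_L$ has only a fold and $\pi_R$ only a blowdown along $\Sigma_1$, exactly as for $\Fc$ with $\alpha \geq 0$. I would then invoke the same composition argument that proves Theorem \ref{thm:F*F alpha>=0} — namely that the composition of a fold/blowdown FIO of order $m$ with its adjoint lies in $I^{2m,0}(\Delta, C_1)$, where $C_1$ is the graph of the reflection $\chi_1(x,\xi) = (x_1,-x_2,\xi_1,-\xi_2)$ generated by the blowdown (this is the Felea–Greenleaf type result; in the excerpt it is Theorem \ref{thm:F*F alpha>=0}, proved via iterated regularity in Appendix \ref{proofs:iterated regularity}). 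Since $\Gc$ has order $3/2$, we get $\Gc^*\Gc \in I^{3,0}(\Delta,C_1)$. The only thing to check carefully is that muting by $g$ and $r_1$ (resp.\ $r_3$) does not destroy the global parameterization or introduce boundary contributions — this follows because $r_1, r_3$ are compactly supported in the open regions and $g$ is $1$ on the relevant compact set.

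For part (\ref{O2}), the substantive case: over $O_2$ the restricted canonical relation $\CG|_{O_2}$ contains both $\Sigma_1$ (blowdown for $\pi_R$) and $\Sigma_2$ (fold for $\pi_R$), and I would first verify that these two hypersurfaces are disjoint in $\CG|_{O_2}$ (since $\Sigma_1$ has $x_2 = 0$ and $\Sigma_2$ has $x_2 \neq 0$) so their contributions can be microlocally separated by a partition of unity. Writing $\Gc = \Gc_1 + \Gc_2 + (\text{smoother})$ microlocally — with $\Gc_1$ supported near $\Sigma_1$ and $\Gc_2$ near $\Sigma_2$ — the normal operator expands as $\Gc_1^*\Gc_1 + \Gc_2^*\Gc_2 + \Gc_1^*\Gc_2 + \Gc_2^*\Gc_1$. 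The term $\Gc_1^*\Gc_1$ is handled as above and lies in $I^{3,0}(\Delta,C_1)$. For $\Gc_2^*\Gc_2$, I would use that $\pi_R$ has a fold along $\Sigma_2$ and $\pi_L$ also has a fold there; the composition of a two-sided fold FIO with its adjoint is known (Greenleaf–Uhlmann / Felea) to lie in $I^{2m,0}(\Delta, C_2)$ where $C_2$ is the two-sided fold canonical relation associated to the reflection map about $\Stwox(s)$ — this is where \eqref{def:C2} and Def.\ \ref{def:fold} are used, and one must identify $C_2$ explicitly by computing the flip map induced on $\CG$ by the $\Sigma_2$-fold. The cross terms $\Gc_1^*\Gc_2$ and $\Gc_2^*\Gc_1$ compose a blowdown-type relation with a fold-type relation; I expect these to yield an element of $I^{3,0}(C_1, C_2)$, which requires checking that $C_1$ and $C_2$ intersect cleanly and computing the composition via the clean-intersection calculus of Guillemin–Uhlmann.

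The main obstacle I anticipate is the cross-term analysis $\Gc_1^*\Gc_2 \in I^{3,0}(C_1,C_2)$: one must show that $\pi_R(\Sigma_1)$ and $\pi_R(\Sigma_2)$, while they meet above the two excluded points $(x_1^\pm, 0)$, do not meet over the (muted) region actually used — this is exactly the role of the cutoff $g$ flagged in Remark \ref{S1S2cutoff} — and then verify the clean-intersection hypotheses and compute the orders so that the excess and the Lagrangian pair come out as claimed. A secondary technical point is confirming that $C_2$ as defined in \eqref{def:C2} really is the canonical relation produced by the fold of $\pi_R$ along $\Sigma_2$, which amounts to parameterizing $\Stwox(s)$ and tracing through the reflection it induces on covectors; this is a direct but somewhat involved computation using the circle equation in \eqref{Sigma2-circle}.
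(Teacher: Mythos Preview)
Your handling of parts (\ref{O1}) and (\ref{O3}) matches the paper's approach: one shows that over those regions the relation $C_2$ does not meet the wavefront relation of the kernel (for $s<\so$ because $\Stwox(s)$ is empty, and for $(s,t)\in O_3$ because $E(s,t)$ lies entirely on one side of $\Stwox(s)$ so the $C_2$ pairing condition \eqref{def:C2} has no solutions), and then invokes the iterated-regularity argument of Theorem~\ref{thm:F*F alpha>=0}.

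For part (\ref{O2}) there is a genuine gap in how you allocate the three $I^{3,0}$ summands. The cross terms $\Gc_1^*\Gc_2$ and $\Gc_2^*\Gc_1$ are in fact \emph{smoothing}, not $I^{3,0}(C_1,C_2)$: for $(x,\xi;y,\eta)$ to lie in the wavefront relation of $\Gc_1^*\Gc_2$ one needs $x$ near the $x_1$-axis and $y$ near $\Stwox(s)$ on the same ellipse $E(s,t)$, and such a pair can lie in $\Delta$, $C_1$, or $C_2$ only when both points are near the excluded intersection points $(x_1^\pm,0)$ --- exactly where the mute $g$ kills the amplitude. There is no general ``blowdown composed with fold yields $I^{p,l}(C_1,C_2)$'' result to invoke, and here the composition is trivial.

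The $I^{3,0}(C_1,C_2)$ contribution actually comes from \emph{within} $\Gc_2^*\Gc_2$. The point you are missing is that the reflection $(x_1,x_2)\mapsto(x_1,-x_2)$ generating $C_1$ is a global symmetry of the problem: it maps the upper arc of $\Stwox(s)$ to the lower arc, so even a piece supported away from $\Sigma_1$ is still two-to-one onto its $\pi_L$-image and $C_1$ remains in $WF'(\Gc_2^*\Gc_2)$. The paper splits $\Gc_2=\Gc_2^1+\Gc_2^2$ into upper- and lower-half pieces; each $(\Gc_2^j)^*\Gc_2^j$ is a genuine two-sided-fold self-composition in $I^{3,0}(\Delta,C_2)$ by Theorem~\ref{thm:Felea Nolan}, while for the off-diagonal terms one writes $(\Gc_2^2)^*\Gc_2^1 = R^*\bigl((\Gc_2^2 R)^*\Gc_2^1\bigr)$ with $R$ the order-zero reflection FIO (canonical relation $C_1$), observes that $\Gc_2^2 R$ has the same two-sided-fold structure as $\Gc_2^1$, and then composes with $R^*$ using the Guillemin--Uhlmann calculus to land in $I^{3,0}(C_1,C_2)$. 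You should also note that the ``smoother'' remainder in your decomposition is not smoother: it is a regular FIO $\Gc_3$ whose compositions with $\Gc_1,\Gc_2,\Gc_3$ must be analyzed (they contribute $I^3$ pieces away from the pairwise intersections), and that $\Gc_1^*\Gc_1$ itself picks up an $I^3(C_2\setminus(\Delta\cup C_1))$ term because the strip $\{|x_2|<2\eps\}$ supporting $\Gc_1$ straddles $\Stwox(s)$.
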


\begin{remark}\label{remark:alpha- general} Using the
properties of the $I^{p,l}$ classes for case \ref{O2} of the
theorem, \[\Gc^*\Gc \in I^{3,0} (\Delta, C_1)+I^{3,0}(\Delta, C_2)+
I^{3,0}(C_{1}, C_2)\] implies that artifacts in the reconstruction
could show up because of $C_1$ (reflection in the $x_1$ axis) and
because of $C_2$ (a 2-sided fold). Furthermore, from the discussion
below Definition \ref{def:Ipl}, we have that $\Gc^*\Gc \in I^3(C_i
\setminus \Delta), \ i=1, 2$, $\Gc^*\Gc \in I^3(C_i \setminus C_j), \
i, j=1, 2, i \neq j$ and $\Gc^*\Gc \in I^3(\Delta \setminus C_i), \
i=1, 2$ and thus these artifacts could be, in general, as strong as
the bona-fide part of the image (corresponding to $I^3(\Delta)$).
\end{remark}

\section{Preliminaries: Singularities and $I^{p,l}$
classes}\label{sect:preliminaries}

Here we introduce the classes of distributions and singular FIO we
will use to analyze the forward operators $\Fc$ and $\Gc$ and the
normal operators $\Fc^* \Fc$ and $\Gc^* \Gc$.

 \begin{definition} \cite{GU1990b}\label{def:fold-blowdown} Let $M$ and $N$ be manifolds of
dimension $n$ and let $f:M \to N$ be $C^\infty$. Define $\Sigma= \{m
\in M \st \det(df)_{m} = 0\}$.

\begin{enumerate}
\item $f$ drops rank by one \emph{simply} on $\Sigma$ if for each $m_{0}\in
\Sigma$,\hfil\newline rank~$(df)_{m_{0}}=n-1$ and
$d(\det(df)_{m_{0}})\neq 0$.

\item $f$ has a \emph{Whitney fold} along $\Sigma$ if $f$ is a local
diffeomorphism away from $\Sigma$ and $f$ drops rank by one simply on
$\Sigma$, so that $\Sigma$ is a smooth hypersurface and $\ker \:(d
f)_{m_{0}} \not \subset T_{m_{0}}\Sigma$ for every $m_{0}\in \Sigma$.

\item $f$ is a \emph{blowdown} along $\Sigma$ if $f$ is a local
diffeomorphism away from $\Sigma$ and $f$ drops rank by one simply on
$\Sigma$, so that $\Sigma$ is a smooth hypersurface and $\ker
(df)_{m_{0}} \subset T_{m_{0}}(\Sigma)$ for every $m_{0}\in \Sigma$.
\end{enumerate}
\end{definition}


 \begin{definition}[{\cite{Melrose-Taylor}}]\label{def:fold} A smooth
canonical relation $C$ for which both projections $\pi_L$ and $\pi_R$
have only (Whitney) fold singularities, is called a \emph{two-sided
fold} or a \emph{folding canonical relation}.
\end{definition}

\par This notion was first introduced by Melrose and Taylor
\cite{Melrose-Taylor}, who
showed the existence of a normal form in $T^*{\rr}^n\times T^*{\rr}^n$.

 \begin{theorem}[{\cite{Melrose-Taylor}}]\label{thm:Melrose-Taylor
 normal form} If dim \ $X=n$ dim \ $Y=n$
and $C \subset (T^*X \smo)\times (T^*Y \smo)$ is a
two-sided fold, then microlocally there are homogeneous canonical
transformations, $\chi_1 :T^*X \to T^*{\rr}^n$ and $\chi_2: T^*Y \to
T^*{\rr}^n$, such that $(\chi_1 \times \chi_2)(C) \subseteq C_0,$
near $ \xi_2 \neq 0$ where, $C_0= N^*\{x_2-y_2=(x_1-y_1)^3; \ x_i=y_i,
\ 3 \le i \le n \}$.
\end{theorem}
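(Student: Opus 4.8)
\emph{Proof strategy.} This is the Melrose--Taylor normal form for a two-sided fold; we sketch how it is proved in \cite{Melrose-Taylor}. Work microlocally near a chosen point $p_0$ of the critical locus, with $n=\dim X=\dim Y$, and let $\sigma$ be the twisted symplectic form on $T^*X\times T^*Y$ (pull-back of $\omega_{T^*X}$ minus pull-back of $\omega_{T^*Y}$), so $C$ is a conic Lagrangian for $\sigma$. \textbf{Step 1: the two critical loci coincide and carry a glancing pair of line fields.} Regard $T_pC$ as a subspace of $T_{\pi_L(p)}(T^*X)\oplus T_{\pi_R(p)}(T^*Y)$, a vector being $(d\pi_L v,\,d\pi_R v)$. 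If $v\in\ker(d\pi_L)_p$ then $(0,d\pi_R v)$ lies in $T_pC=(T_pC)^{\perp_\sigma}$, which is equivalent to $d\pi_R v\in\bigl((d\pi_R)_pT_pC\bigr)^{\perp}$ (symplectic orthogonal in $T(T^*Y)$); since $v\mapsto d\pi_R v$ is injective here, this is a linear isomorphism $\ker(d\pi_L)_p\cong\bigl((d\pi_R)_pT_pC\bigr)^{\perp}$, and counting dimensions gives $\dim\ker(d\pi_L)_p=\dim\ker(d\pi_R)_p$ for every $p\in C$. Hence $\pi_L$ and $\pi_R$ are singular along the \emph{same} set $\Sigma\subset C$, a smooth conic hypersurface on which each has corank exactly one (both being Whitney folds). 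Moreover $\ker(d\pi_L)_p\cap\ker(d\pi_R)_p=0$ automatically — such a $v$ would be $(0,0)$ in $T_pC$ — so the kernel line fields $\ell_L:=\ker d\pi_L$ and $\ell_R:=\ker d\pi_R$ along $\Sigma$ are everywhere distinct, each transverse to $T\Sigma$ by the fold hypothesis. The triple $(\Sigma;\ell_L,\ell_R)$, with the ambient conic symplectic structure, is exactly a conic glancing pair.

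\textbf{Step 2: straighten $\pi_R$ by $\chi_2$.} As $\ell_R\cap T\Sigma=0$, $\pi_R|_\Sigma$ is an immersion, so $G_R:=\pi_R(\Sigma)$ is (microlocally) a conic hypersurface of $T^*Y$, automatically coisotropic, and $\pi_R$ presents $C$ as a two-to-one cover of one side of $G_R$. A homogeneous Darboux argument produces a homogeneous canonical transformation $\chi_2$ of $T^*Y$ sending $G_R$ to $\{\eta_1=0\}$; combining it with the Whitney fold normal form carried out compatibly with the conic symplectic structure (a Darboux-with-parameters / Moser-type deformation) yields homogeneous coordinates $(y,t,\lambda_1,\lambda')$ on $C$, with $\lambda'=(\lambda_3,\dots,\lambda_n)$, in which $\chi_2\circ\pi_R$ equals the model projection $\pi_R^{0}$ of $C_0$, namely $(y,t,\lambda_1,\lambda')\mapsto(y,\,3\lambda_1t^2,\,-\lambda_1,\,-\lambda')$, with $\Sigma=\{t=0\}$, $\ell_R=\mathbb{R}\,\partial_t$, and $\lambda_1\neq0$ on the piece corresponding to $\xi_2\neq0$.

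\textbf{Step 3: straighten $\pi_L$ by $\chi_1$.} With $\pi_R$ fixed, the residual gauge acting on $C$ is exactly the homogeneous canonical transformations $\chi_1$ of $T^*X$. Off $\Sigma$, $C$ is the graph of a homogeneous canonical transformation with a fold jump across $G_R$; the Lagrangian condition ties the two local branches together, and because $\pi_L$ is a Whitney fold with $\ell_L\neq\ell_R$, the contact along $\Sigma$ between the fold direction of $\pi_L$ and that of $\pi_R$ is of \emph{exactly} order three. Choosing $\chi_1$ so that the $\pi_L$-fold variable becomes $x_1-y_1$ and the cubic contact is absorbed into $x_2-y_2$ carries $(\chi_1\times\chi_2)(C)$ into $C_0=N^*\{x_2-y_2=(x_1-y_1)^3,\ x_i=y_i,\ 3\le i\le n\}$ near $\xi_2\neq0$; the image is an open conic subset of $C_0$, whence inclusion rather than equality.

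\textbf{Main obstacle.} The heart of the matter is Step 3: that the two folds can be straightened \emph{simultaneously}, i.e.\ after normalizing $\pi_R$ the remaining freedom is exactly sufficient to normalize $\pi_L$ to the cubic model, with no further local invariants. This is the content of \cite{Melrose-Taylor}, and is most transparent in the equivalent language of glancing hypersurfaces: the transverse pair $(\ell_L,\ell_R)$ along $\Sigma$ encodes a glancing pair, and $(x_1-y_1)^3$ is the normal form of the generic second-order tangency. One also has to exclude higher degeneracies (cusps, swallowtails), and this is precisely what the hypothesis that \emph{both} projections are Whitney folds — rather than merely corank one — supplies, by providing the nondegenerate Hessian that pins the contact order at three; the restriction to $\xi_2\neq0$ is the region where $\lambda_1\neq0$ and all of these nondegeneracies hold.
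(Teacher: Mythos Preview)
The paper does not prove this theorem at all: it is quoted from \cite{Melrose-Taylor} as a background result (Theorem~3.3 in the preliminaries section) and is used without proof in the analysis of $\Gc^*\Gc$. So there is no ``paper's own proof'' to compare against.

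Your sketch is a reasonable outline of the Melrose--Taylor argument, and your Step~1 (that the two critical loci coincide, with the dimension-count via the Lagrangian condition) is clean and correct. The identification with a glancing pair in the sense of Melrose is indeed the conceptual core. That said, your Step~3 is where the real work lies and your write-up is more of a description of the outcome than an argument: the claim that ``the remaining freedom is exactly sufficient'' to normalize $\pi_L$ after $\pi_R$ has been fixed is precisely the hard theorem, and you have not indicated \emph{how} one shows there are no further local invariants beyond the cubic contact. In \cite{Melrose-Taylor} this goes through the equivalence with pairs of glancing hypersurfaces and a careful Moser-type deformation respecting both folds simultaneously; simply asserting that the contact order is three and that this ``can be absorbed'' skips the substance. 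If this were being graded as a proof rather than a sketch, Step~3 would need either a reference to the specific lemmas in \cite{Melrose-Taylor} that carry out the simultaneous normalization, or an honest admission that this step is where the nontrivial analysis occurs.
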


We now define $I^{p,l}$ classes. They were first introduced by Melrose
and Uhlmann \cite{MU}, Guillemin and Uhlmann \cite{Guillemin-Uhlmann}
and Greenleaf and Uhlmann \cite{GU1990a} and they have been used in the study of SAR imaging \cite{NC2004,RF1,Krishnan-Quinto, AFKNQ:common_midpoint}.

 \begin{definition}\label{def:cleanly} Two submanifolds $M$ and $N$
intersect {\it cleanly} if $M \cap N$ is a smooth submanifold and $T(M
\cap N)=TM \cap TN$.
\end{definition}

Consider two spaces $X$ and $Y$ and let $\Lambda_0$ and $\Lambda_1$
and $\tilde{\Lambda}_0$ and $\tilde{\Lambda}_1$ be Lagrangian
submanifolds of the product space $T^*X \times T^*Y$.  If they
intersect cleanly, $(\tilde{\Lambda}_0, \tilde{\Lambda}_1)$ and
$(\Lambda_0, \Lambda_1)$ are equivalent in the sense that there is,
microlocally, a canonical transformation $\chi$ which maps
$(\Lambda_0, \Lambda_1)$ into $(\tilde{\Lambda}_0,
\tilde{\Lambda}_1)$ and $\chi(\Lambda_0 \cap \Lambda_1)=(\tilde{\Lambda}_0 \cap \tilde{\Lambda}_1)$.
This leads us to the following model case.

\begin{example}\label{example:basic} Let
$\tilde{\Lambda}_0=\Delta_{T^*\rr^n}=\{ (x, \xi; x, \xi)\st x \in \rr^n, \
\xi \in \rr^n \smo \}$ be the diagonal in $T^*\rr^n \times T^*\rr^n$
and let $\tilde{\Lambda}_1= \{ (x', x_n, \xi', 0; x', y_n, \xi', 0)\st
x' \in \rr^{n-1}, \ \xi' \in \rr^{n-1} \smo \}$.  Then,
$\tilde{\Lambda}_0$ intersects $ \tilde{\Lambda}_1$ cleanly in
codimension $1$.
\end{example}

Now we define the class of product-type symbols $S^{p,l}(m,n,k)$.

\begin{definition}
$S^{p,l}(m,n,k)$ is the set of all functions $a(z;\xi,\sigma) \in
C^{\infty} (\rr^m \times (\rr^n \smo) \times \rr^k )$ such that for every $K
\subset \rr^m$ and every $\alpha \in \zz^m_+, \beta \in \zz^n_+, \delta \in
\zz^k_+$ there is $c_{K, \alpha, \beta}$ such that
\[|\partial_z^{\alpha}\partial_{\xi}^{\beta}\partial_{\sigma}^{\delta}
a(z,\xi,\sigma)| \le c_{K,\alpha,\beta}(1+ |\xi|)^{p- |\beta|} (1+
|\sigma|)^{l-| \delta|}\] for all $(z,\xi,\tau) \in K \times
(\rr^n \smo) \times \rr^k$.
\end{definition}

Since any two sets of cleanly intersecting Lagrangians are equivalent,
we first define $I^{p,l}$ classes for the case in Example
\ref{example:basic}.

\begin{definition}[\cite{Guillemin-Uhlmann}]\label{def:GU}   Let $I^{p,l}(\tilde{\Lambda}_0,
\tilde{\Lambda}_1)$ be the set of all distributions $u$ such that
$u=u_1 + u_2$ with $u_1 \in C^{\infty}_0$ and $$u_2(x,y)=\int
e^{i((x'-y')\cdot \xi'+(x_n-y_n-s) \cdot \xi_n+ s \cdot \sigma)}
a(x,y,s; \xi,\sigma)d\xi d\sigma ds$$ with $a \in S^{p',l'}$ where
$p'=p-\frac{n}{2}+\frac{1}{2}$ and $l'=l-\frac{1}{2}$.
\end{definition}

This allows us to define the $I^{p,l}(\Lambda_0, \Lambda_1)$ class for
any two cleanly intersecting Lagrangians in codimension $1$ using the
microlocal equivalence with the case in Example \ref{example:basic}.

\begin{definition}\label{def:Ipl}\cite{Guillemin-Uhlmann} Let  $I^{p,l}( \Lambda_0, \Lambda_1)$
be the set of all distributions $u$ such that $ u=u_1 + u_2 + \sum
v_i$ where $u_1 \in I^{p+l}(\Lambda_0 \setminus \Lambda_1)$, $u_2 \in
I^{p}(\Lambda_1 \setminus \Lambda_0)$, the sum $\sum v_i$ is locally
finite and $v_i=Aw_i$ where $A$ is a zero order FIO associated to
$\chi ^{-1}$, the canonical transformation from above, and $w_i \in
I^{p,l}(\tilde {\Lambda}_0, \tilde{\Lambda}_1)$.
\end{definition}

This class of distributions is invariant under FIOs associated to
canonical transformations which map the pair $(\Lambda_0, \Lambda_1)$
to itself, whilst also preserving the intersection.  By definition, $F\in I^{p,l}(\Lambda_{0},\Lambda_{1})$ if
its Schwartz kernel belongs to $I^{p,l}(\Lambda_{0},\Lambda_{1})$. If
$F \in I^{p,l}(\Lambda_0, \Lambda_1)$ then $F \in I^{p+l}(\Lambda_0
\setminus \Lambda_1)$ and $F \in I^p(\Lambda_1 \setminus \Lambda_0)$
\cite{Guillemin-Uhlmann}.  Here by $F\in I^{p+l}(\Lambda_{0} \setminus
\Lambda_{1})$, we mean that the Schwartz kernel of $F$ belongs to
$I^{p+l}(\Lambda_{0})$ microlocally away from $\Lambda_{1}$.

To show that a distribution belongs to $I^{p,l}$ class we
use the iterated regularity property:

\begin{theorem} [{\cite[Proposition 1.35]{GU1990a}}]\label{thm:iterated
regularity}
If $u \in \Dc'(X \times Y)$ then $u \in I^{p,l} (\Lambda_0,
\Lambda_1)$ if there is an $s_0 \in \rr$ such that for all first order
pseudodifferential operators $P_i$ with principal symbols vanishing on
$ \Lambda_0 \cup \Lambda_1$, we have $P_1P_2 \dots P_r u \in
H^{s_0}_{loc}$.
\end{theorem}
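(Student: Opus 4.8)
The plan is to establish the stated implication by reducing, microlocally, to the model pair $(\tilde\Lambda_0,\tilde\Lambda_1)$ of Example~\ref{example:basic} and then carrying out a Fourier‑side computation there. Away from the intersection $\Lambda_0\cap\Lambda_1$ the assertion is the classical iterated‑regularity characterization of a single Lagrangian (conormal) distribution, so all of the new content is concentrated in a conic neighborhood of a point of $\Lambda_0\cap\Lambda_1$. The converse implication — that every $u\in I^{p,l}(\Lambda_0,\Lambda_1)$ satisfies iterated regularity — is part of the symbol calculus for these classes (one differentiates and multiplies the oscillatory integral of Definition~\ref{def:GU}); it will be used only as a consistency check on the choice of generators and of the Sobolev exponent.

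Step 1 (reduction to the model). Since both hypothesis and conclusion are local and microlocal, fix $\lambda\in\Lambda_0\cap\Lambda_1$ and work in a small conic neighborhood. Because $(\Lambda_0,\Lambda_1)$ is a cleanly intersecting pair in codimension $1$ (Definition~\ref{def:cleanly}), it is microlocally equivalent to $(\tilde\Lambda_0,\tilde\Lambda_1)$: there are a homogeneous canonical transformation $\chi$ carrying the pair to the model and an elliptic zeroth‑order FIO $A$ associated to $\chi^{-1}$ realizing the isomorphism $A\colon I^{p,l}(\tilde\Lambda_0,\tilde\Lambda_1)\to I^{p,l}(\Lambda_0,\Lambda_1)$ of Definition~\ref{def:Ipl}. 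By Egorov's theorem, conjugation $P\mapsto A^{-1}PA$ maps the module $\mathcal{M}(\Lambda_0\cup\Lambda_1)$ of first‑order pseudodifferential operators with principal symbol vanishing on $\Lambda_0\cup\Lambda_1$ onto $\mathcal{M}(\tilde\Lambda_0\cup\tilde\Lambda_1)$ modulo operators of order $0$, while $A^{-1}$ preserves $H^{s_0}_{loc}$. Hence the iterated‑regularity hypothesis for $u$ is equivalent to the corresponding hypothesis for $A^{-1}u$ relative to $(\tilde\Lambda_0,\tilde\Lambda_1)$, and it suffices to show this forces $A^{-1}u\in I^{p,l}(\tilde\Lambda_0,\tilde\Lambda_1)$.

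Step 2 (the model module and the symbol‑side estimate). Write points of $\rr^n\times\rr^n$ as $(x,y)$; in suitable coordinates $\tilde\Lambda_0$ is the conormal bundle of $\{x=y\}$ and $\tilde\Lambda_1$ that of $\{x'=y'\}$. A symbol computation shows $\mathcal{M}(\tilde\Lambda_0\cup\tilde\Lambda_1)$ is generated over $\Psi^0$ by: the translation fields $D_{x_j}+D_{y_j}$ for $1\le j\le n$ (symbols vanishing on both Lagrangians); the conormal multiplications $(x_j-y_j)\Lambda$, $1\le j\le n-1$, with $\Lambda\in\Psi^1$ elliptic; and the single mixed generator $(x_n-y_n)D_{x_n}$, whose symbol $(x_n-y_n)\xi_n$ vanishes on $\tilde\Lambda_0$ because $x_n=y_n$ there and on $\tilde\Lambda_1$ because $\xi_n=0$ there. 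Passing to the Fourier transform in all $2n$ variables, the translation fields confine the frequency support of $\widehat{A^{-1}u}$ near the characteristic variety; the conormal multiplications with $j<n$ act as weighted $(\partial_{\xi_j}-\partial_{\eta_j})$ and encode classical symbol decay transverse to $\tilde\Lambda_1$; and the mixed generator acts as $\partial_{\xi_n}(\xi_n\,\cdot\,)$, coupling the $\{x_n=y_n\}$‑transverse derivative to the weight $\xi_n$. Separating the frequency $\xi\in\rr^n$ transverse to $\tilde\Lambda_0$ from the extra ``blowup'' variable $\sigma$ that measures proximity to $\tilde\Lambda_1$ (on the critical set of the phase of Definition~\ref{def:GU}, $\sigma=\xi_n$), and performing a dyadic Littlewood--Paley decomposition in $(|\xi|,|\sigma|)$, the hypothesis ``$P_1\cdots P_r u\in H^{s_0}_{loc}$ for all $P_i\in\mathcal{M}$ and all $r$'' translates into the two‑parameter product‑type estimate $|\partial_z^\alpha\partial_\xi^\beta\partial_\sigma^\delta a|\lesssim(1+|\xi|)^{p'-|\beta|}(1+|\sigma|)^{l'-|\delta|}$ with $p'=p-\tfrac n2+\tfrac12$, $l'=l-\tfrac12$, the values $p,l$ being determined by $s_0$, the dimensions of $X,Y$, and the classical orders of $u$ along $\Lambda_0\setminus\Lambda_1$ and $\Lambda_1\setminus\Lambda_0$. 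Realizing such $a$ as the partial inverse Fourier transform in Definition~\ref{def:GU} exhibits $A^{-1}u$, modulo $C_0^\infty$, in the required form, so $A^{-1}u\in I^{p,l}(\tilde\Lambda_0,\tilde\Lambda_1)$; Step~1 then gives $u\in I^{p,l}(\Lambda_0,\Lambda_1)$.

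The main obstacle is the last part of Step~2: converting the purely qualitative iterated‑regularity hypothesis into the quantitative two‑parameter symbol class $S^{p',l'}$. Two points need care. First, one must check that the list in Step~2 not only lies in but equals $\mathcal{M}(\tilde\Lambda_0\cup\tilde\Lambda_1)$ — every homogeneous degree‑one symbol vanishing on $\tilde\Lambda_0\cup\tilde\Lambda_1$ must lie in the ideal generated by the symbols of those operators — so that no regularity direction is missed. Second, one must track the combinatorics of repeated application of the mixed generator $(x_n-y_n)D_{x_n}$, which trades a gain in the $|\sigma|$‑weight against localization in $x_n-y_n$; the bookkeeping of how the $r$ factors distribute among the generators is precisely what upgrades the ``one‑scale'' Sobolev bounds to uniform estimates in the two independent frequency scales. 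The remaining ingredients — microlocal localization, Egorov's theorem, the single‑Lagrangian iterated‑regularity theorem, and the realization of a product‑type symbol as an oscillatory integral — are standard.
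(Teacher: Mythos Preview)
The paper does not prove this statement: it is quoted verbatim as \cite[Proposition~1.35]{GU1990a} and used as a black box, so there is no ``paper's own proof'' to compare against. Your sketch is a plausible outline of the Greenleaf--Uhlmann argument (reduce to the model pair by an elliptic FIO and Egorov, identify generators of the module of first-order operators vanishing on $\tilde\Lambda_0\cup\tilde\Lambda_1$, then convert iterated $H^{s_0}$ bounds into product-type symbol estimates via a two-parameter dyadic decomposition), and the generators you list are correct once one remembers the twist $(x,\xi;y,\eta)\mapsto(x,\xi,y,-\eta)$ taking canonical relations to Lagrangians in $T^*(X\times Y)$.

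That said, as an actual proof your Step~2 is only a program, not an argument. You correctly flag the two hard points yourself, but neither is carried out: you have not shown that your list \emph{generates} the ideal of symbols vanishing on $\tilde\Lambda_0\cup\tilde\Lambda_1$ (so that the hypothesis really controls every relevant direction), and the passage from ``$P_1\cdots P_r u\in H^{s_0}$ for all $r$'' to the quantitative two-scale estimate $a\in S^{p',l'}$ is asserted rather than proved---this is precisely the nontrivial content of the original proposition and requires a careful Littlewood--Paley argument that handles the interaction of the two frequency scales. Also, your claim that ``the values $p,l$ [are] determined by $s_0$'' is slightly off: the iterated-regularity hypothesis only yields membership in $I^{p,l}$ for \emph{some} $(p,l)$ depending on $s_0$ and the dimensions, not for prescribed $(p,l)$; in the applications in the paper (Appendix~\ref{proofs:iterated regularity}) the specific orders are then pinned down by a separate calculation.
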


In section \ref{sect:alpha-}, we will use the following theorem.
\begin{theorem}[{\cite{RF1, Nolan-fold_caustics}}] \label{thm:Felea Nolan}If
$F$ is a FIO of order m whose canonical relation is a two-sided fold
then $F^*F \in I^{2m,0}(\Delta, \tilde{C})$ where $\tilde{C}$ is
another two-sided fold.
\end{theorem}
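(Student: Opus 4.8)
The plan is to reduce to the Melrose--Taylor model, compute the composed canonical relation by hand, and read off the orders of the resulting paired Lagrangian. First, away from the fold surface $\Sigma=\{m\in C:\det d\pi_{L}|_{m}=0\}$ both projections of $C$ are local diffeomorphisms, so $C$ is locally a canonical graph there and $F^{*}F$ is microlocally an elliptic pseudodifferential operator of order $2m$ (here, as is standard in this setting, one uses that the principal symbol of the order-$m$ FIO $F$ is elliptic along $C$), contributing only to the $\Delta$-part. Near $\Sigma$, Theorem~\ref{thm:Melrose-Taylor normal form} gives homogeneous canonical transformations $\chi_{1},\chi_{2}$ carrying $C$ into the model folding relation $C_{0}=N^{*}\{x_{2}-y_{2}=(x_{1}-y_{1})^{3},\ x_{j}=y_{j},\ 3\le j\le n\}$ of that theorem. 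Choosing elliptic order-$0$ FIOs $A,B$ associated to the graphs of $\chi_{1},\chi_{2}$, the conjugate $F_{0}:=AFB^{-1}$ is an elliptic FIO of order $m$ with canonical relation $C_{0}$. Since $F^{*}F=B^{*}F_{0}^{*}(AA^{*})^{-1}F_{0}B$ with $(AA^{*})^{-1}$ an elliptic pseudodifferential operator of order $0$, and conjugation by the graph-FIO $B$ carries $I^{p,l}(\Delta,\tilde{C})$ to the $I^{p,l}$ class of the image pair (diagonal to diagonal, two-sided fold to two-sided fold), it suffices to prove $F_{0}^{*}F_{0}\in I^{2m,0}(\Delta,\tilde{C})$ for the model.

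Next I would compute $C_{0}^{t}\circ C_{0}$. Writing $C_{0}=N^{*}Z$ and parametrizing it by $(y,s,\lambda,\lambda_{3},\dots,\lambda_{n})$ as the set of $\bigl(y+(s,s^{3},0,\dots,0),\,\xi\,;\,y,\,\xi\bigr)$ with $\xi=(-3\lambda s^{2},\lambda,\lambda_{3},\dots,\lambda_{n})$, one sees that two covectors share a preimage under $C_{0}$ exactly when they coincide and their base points differ by $\pm(s,s^{3},0,\dots,0)$ for the common value $s^{2}=-\xi_{1}/(3\xi_{2})$. Hence $C_{0}^{t}\circ C_{0}=\Delta\cup\tilde{C}$, where $\tilde{C}$ is the branch-swap relation $\bigl\{\bigl(y+(s,s^{3},0,\dots),\xi\,;\,y-(s,s^{3},0,\dots),\xi\bigr)\bigr\}$ in the same parameters. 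A short Jacobian computation shows that $\det d\pi_{L}$ and $\det d\pi_{R}$ are each a nonzero multiple of $s$, vanishing simply on $\{s=0\}$, with kernel $\partial_{y_{1}}\mp\partial_{s}$ transverse to $\{s=0\}$; hence $\tilde{C}$ is a two-sided fold. On $\{s=0\}$ one has $\tilde{C}=\Delta$, and comparing tangent spaces shows $\Delta$ and $\tilde{C}$ meet cleanly in codimension one along the common fold surface $\{s=0\}=\{\xi_{1}=0\}$ --- exactly the geometry behind Definition~\ref{def:GU}.

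To establish $F_{0}^{*}F_{0}\in I^{p,l}(\Delta,\tilde{C})$ for some $(p,l)$, I would verify the iterated-regularity criterion of Theorem~\ref{thm:iterated regularity}: the wavefront relation of $F_{0}^{*}F_{0}$ lies in $C_{0}^{t}\circ C_{0}=\Delta\cup\tilde{C}$, and repeated application of first-order pseudodifferential operators whose principal symbols vanish on $\Delta\cup\tilde{C}$ keeps $F_{0}^{*}F_{0}$ in a fixed Sobolev space, because such operators commuted against the explicit oscillatory-integral kernel of $F_{0}$ produce only bounded gains. The exponents are then pinned down by the behaviour on the two Lagrangians: microlocally on $\Delta\setminus\tilde{C}$ the composition is transverse, so $F_{0}^{*}F_{0}$ is an elliptic pseudodifferential operator of order $2m$, which forces $p+l=2m$ by Definition~\ref{def:Ipl}; microlocally on $\tilde{C}\setminus\Delta$ the composition is again transverse, so $F_{0}^{*}F_{0}$ is an FIO of order $2m$ associated to $\tilde{C}$, which forces $p=2m$. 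Hence $l=0$, so $F_{0}^{*}F_{0}\in I^{2m,0}(\Delta,\tilde{C})$; undoing the reductions of the first step yields $F^{*}F\in I^{2m,0}(\Delta,\tilde{C})$ with $\tilde{C}$ a two-sided fold.

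The main obstacle is the iterated-regularity estimate near the fold surface $\{s=0\}$, where $\Delta$ and $\tilde{C}$ coincide and the composition $C_{0}^{t}\circ C_{0}$ ceases to be transverse, so that the critical manifold of the phase of $F_{0}^{*}F_{0}$ degenerates. One must exploit the explicit cubic structure $(x_{1}-y_{1})^{3}$ of the model to show that first-order operators annihilating $\Delta\cup\tilde{C}$ still improve Sobolev regularity by a full order there rather than stalling; equivalently, in a direct oscillatory-integral treatment, after integrating out the interior variables one is left with a phase carrying a nondegenerate quadratic block plus a cubic term in one remaining frequency, and the amplitude must be carried through the change of variables that normalizes that cubic so as to land in the product symbol class $S^{p',l'}$ of Definition~\ref{def:GU} with exactly $p=2m$, $l=0$ and not in a class with a larger $l$. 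Verifying the clean intersection of $\Delta$ and $\tilde{C}$ and the order bookkeeping are the remaining technical points; the reduction to the model and the composition of the canonical relations are comparatively routine.
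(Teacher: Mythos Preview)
The paper does not prove Theorem~\ref{thm:Felea Nolan}; it is quoted from \cite{RF1,Nolan-fold_caustics} and invoked as a black box in the proof of Theorem~\ref{ImagingSection:I(p,l) regularity}\eqref{G2*G2}. Your outline is the correct strategy and is essentially the one carried out in those references: reduce to the Melrose--Taylor normal form, compute $C_0^t\circ C_0=\Delta\cup\tilde C$ explicitly, verify that $\tilde C$ is a two-sided fold meeting $\Delta$ cleanly in codimension one along $\{s=0\}=\{\xi_1=0\}$, then establish $I^{p,l}$ membership via iterated regularity and fix $(p,l)=(2m,0)$ by reading off the orders on $\Delta\setminus\tilde C$ and $\tilde C\setminus\Delta$, where the composition is branchwise transverse.

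Your identification of the main obstacle is accurate. In \cite{RF1} the iterated-regularity step is carried out concretely rather than by the commutator heuristic you sketch: one writes down explicit generators of the ideal of functions vanishing on $\Delta\cup\tilde C$ (polynomials in $x-y$ and $\xi-\eta$ adapted to the cubic structure, analogous to the $\tilde p_i$ in \eqref{generators:F}) and shows each is a sum of products of smooth functions with the fiber derivatives $\partial_\omega\Phi$, $\partial_s\Phi$ of the composed phase. This is precisely where the cubic $(x_1-y_1)^3$ enters to rescue the estimate at $s=0$. Your order bookkeeping is correct provided the principal symbol of $F$ is nonvanishing on $C$, so that the transverse-calculus compositions on each open Lagrangian are genuinely of order $2m$ and not lower; this ellipticity hypothesis is implicit in the statement and worth making explicit.
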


\section{Summary of the main result for the case $\A=-1$}\label{sect:common_midpoint}

Recall that in the statement of Theorem \ref{thm:G*G}, we assumed that $\A\neq -1$. In fact, as already mentioned in the introduction, the case when $\A=-1$ in the context of  Theorem \ref{thm:G*G} was analyzed in our earlier paper \cite{AFKNQ:common_midpoint}, and the results obtained in this work can be considered as a bifurcation of the singularities that appear for the case when $-1\neq \A<0$, as $\A\to -1$. With this is mind, we state the main result obtained in \cite{AFKNQ:common_midpoint}, and briefly explain how our earlier result fits into the framework of the current article.

Let $\Tc$ denote the operator
\Beq\label{def:Gcm} \Tc V(s,t):=\int
e^{-i\omega\left(t-\norm{x-\gt(s)}-\norm{x-\gr(s)}\right)}
a(s,t,x,\omega) V(x) \D x \D \omega, \Eeq
where
\begin{align*}
\g_{T}(s)=\sqrt{(x_{1}+s)^2+x_{2}^2+h^2} \mbox{ and } \g_{R}(s)=\sqrt{(x_{1}-s)^2+x_{2}^2+h^2}.
\end{align*}

\begin{theorem}\cite{AFKNQ:common_midpoint}
Let $\Tc$ be the operator in \eqref{def:Gcm}. Then the normal operator $\Tc^{*}\Tc$ can be decomposed as a sum:

\[
\Tc^{*}\Tc\in I^{2m,0}(\Delta,\Lambda_1)+I^{2m,0}(\Delta, \Lambda_2)+
I^{2m,0}(\Lambda_{1},\Lambda_{3})+I^{2m,0}(\Lambda_{2},\Lambda_{3}).
\]
\end{theorem}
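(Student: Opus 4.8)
The plan is to reduce the theorem to an application of results that are already in the toolbox of the current paper, namely the analysis of the forward operator $\Gc$ in the $\A<0$ case and the $I^{p,l}$ composition calculus, specialized to $\A=-1$. First I would verify that with the trajectories $\g_T(s)=(-s,0,h)$, $\g_R(s)=(s,0,h)$ (the common-midpoint geometry, which is the $\A=-1$ specialization of \eqref{def:GtGr}), the operator $\Tc$ in \eqref{def:Gcm} is an FIO of order $3/2$ whose canonical relation $\Cc_{\Tc}$ is again given by the formula \eqref{def:Canonical relation} with $\A=-1$ substituted. This is essentially Theorem \ref{thm:G alpha<0}(1)--(2) read at $\A=-1$; one checks the stationary-phase computation of the phase $t-A-B$ goes through verbatim, since nothing in that computation used $\A\neq -1$.

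Next I would analyze the left and right projections $\pi_L,\pi_R$ from $\Cc_{\Tc}$ at $\A=-1$. Here the degeneracy set $\Sigma$ changes character: setting $\A=-1$ in \eqref{def:Sigma2} makes the circle $\Stwox(s)$ degenerate (its center $2\A s/(\A+1)$ runs off to infinity and its defining equation becomes $x_2=0$ up to lower order, or more precisely $\Sigma_2$ merges with the line $x_1$-axis structure), so the two hypersurfaces $\Sigma_1,\Sigma_2$ of Theorem \ref{thm:G alpha<0} are no longer disjoint — they coincide or become tangent. I would identify the correct singular set for $\Tc$ directly from the rank computation of $d\pi_L$ and $d\pi_R$, show $\pi_L$ still has a fold there, and determine the (now more degenerate) type of $\pi_R$. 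This is the step where the ``bifurcation'' language in the surrounding text is cashed out: the clean fold/blowdown picture of $\A<0$ degenerates at $\A=-1$, and I expect a higher-order contact (e.g.\ a blowdown that is no longer simple, or a fold-type singularity of cusp type), which is precisely why extra Lagrangians $\Lambda_1,\Lambda_2,\Lambda_3$ appear in the decomposition rather than just one $C_1$.

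Then I would compute $\Cc_{\Tc}^t\circ\Cc_{\Tc}$ as a set. Because $\pi_R$ is not an immersion, the composition is not transverse and the fibered product will be a union of several Lagrangians: the diagonal $\Delta$ (always present, coming from the ``bona-fide'' part), plus relations $\Lambda_1,\Lambda_2$ coming from the two reflection symmetries of the common-midpoint geometry (reflection in $x_1$ and reflection in $x_2$, say), plus a further relation $\Lambda_3$ arising from composing these. I would then check that each consecutive pair among $(\Delta,\Lambda_1)$, $(\Delta,\Lambda_2)$, $(\Lambda_1,\Lambda_3)$, $(\Lambda_2,\Lambda_3)$ intersects cleanly in codimension one (Definition \ref{def:cleanly}), identify the intersections explicitly, and apply the iterated-regularity criterion (Theorem \ref{thm:iterated regularity}) to the Schwartz kernel of $\Tc^*\Tc$ to conclude membership in the stated sum of $I^{2m,0}$ classes, with the order bookkeeping $2m$ following from the order $3/2$ of $\Tc$ exactly as in the proof of Theorem \ref{thm:F*F alpha>=0}.

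The main obstacle I anticipate is organizing the composition at the degenerate point $\A=-1$: because $\pi_R$ fails to be a fold or blowdown in the usual sense there, one cannot simply quote Theorem \ref{thm:Felea Nolan} or the $\A\geq 0$ argument. One must carefully stratify the fibered product $\Cc_{\Tc}^t\times_Y\Cc_{\Tc}$, show that away from the bad set each stratum is a smooth Lagrangian, and then verify the clean-intersection hypotheses pairwise — keeping track of which pairs actually intersect and in what codimension. A secondary subtlety is that some of the putative pieces might a priori coincide or be related by a symplectomorphism, so one must confirm that $\Delta,\Lambda_1,\Lambda_2,\Lambda_3$ are genuinely distinct and that no artifact is spuriously double-counted; this is handled by writing out each $\Lambda_i$ as an explicit graph (as $C_1$ is written in \eqref{def:C1}) and comparing. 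Since the full argument is carried out in \cite{AFKNQ:common_midpoint}, here it suffices to state the result and indicate that it is the $\A\to-1$ limit of Theorem \ref{thm:G*G}.
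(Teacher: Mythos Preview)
The paper does not prove this theorem; it is quoted from \cite{AFKNQ:common_midpoint} as a summary of prior work, so there is no ``paper's own proof'' to compare against. Your final sentence already acknowledges this.

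That said, your sketch contains a concrete geometric error that would derail the argument if carried out. You claim that at $\A=-1$ the circle $\Stwox(s)$ degenerates so that ``its defining equation becomes $x_2=0$ up to lower order'' and that $\Sigma_1,\Sigma_2$ ``coincide or become tangent.'' This is not what happens. Setting $\A=-1$ in the defining relation $\frac{\A}{A^2}+\frac{1}{B^2}=0$ gives $A^2=B^2$, i.e.\ $(x_1+s)^2=(x_1-s)^2$, hence $x_1=0$ for every $s$. Thus $\Sigma_2$ becomes the vertical line $\{x_1=0\}$, \emph{perpendicular} to $\Sigma_1=\{x_2=0\}$, and independent of $s$. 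The paper says exactly this in the paragraph following the theorem. Consequently $\pi_R$ has \emph{two} separate blowdown loci (along $\{x_2=0\}$ and along $\{x_1=0\}$), reflecting the full Klein four-group of symmetries $(x_1,x_2)\mapsto(\pm x_1,\pm x_2)$ of the common-midpoint geometry. The four Lagrangians $\Delta,\Lambda_1,\Lambda_2,\Lambda_3$ are precisely the graphs of these four symmetries, and the pairings in the statement arise because each nontrivial symmetry intersects $\Delta$ (and one other $\Lambda_i$) cleanly. There is no ``higher-order contact'' or cusp-type degeneracy of $\pi_R$; the mechanism is simply two independent blowdowns rather than one.

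With that correction, your outline---compute $\Cc_{\Tc}$, analyze $\pi_L,\pi_R$ along the two singular hypersurfaces, identify $\Cc_{\Tc}^t\circ\Cc_{\Tc}$ as the union of the four symmetry graphs, verify the pairwise clean intersections, and apply iterated regularity---matches the strategy actually used in \cite{AFKNQ:common_midpoint}.
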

Here $\Lambda_{1}, \Lambda_{2}$ and $\Lambda_{3}$ denote the additional singularities (artifacts) caused due to reflection about the $x_{1}$ axis, reflection about the $x_{2}$ axis and rotation by $\pi$ about the origin, respectively. In other words,
$\Lambda_{1}$ is the same as the one defined in \eqref{def:C1} and $\Lambda_{2}$ and $\Lambda_{3}$ are defined as:
\[
\Lambda_2=\{(x_1, x_2, \xi_1, \xi_2; -x_1, x_2, -\xi_1,
\xi_2)\st (x,\xi)\in T^{*}X \smo \}
\]
and
\[
\Lambda_3=\{(x_1, x_2, \xi_1, \xi_2; -x_1, -x_2, -\xi_1,
-\xi_2)\st (x,\xi)\in T^{*}X \smo \}.
\]

The theorem above is a limiting case of the result in Theorem \ref{thm:G*G}, and in the limit, there is a bifurcation of the singularities, due to the presence of $4$ Lagrangians in the result above compared to $3$ in Theorem \ref{thm:G*G}. From \eqref{def:Sigma2}, we see that when $\A=-1$, the circle given by the equation
\[
\frac{\A}{A^{2}}+\frac{1}{B^{2}}=0
\]
becomes the straight line $x_{1}=0$ regardless of the value $s$, and as noted in Remark 2.3 of \cite{AFKNQ:common_midpoint}, the canonical relation associated to the operator $\Tc$ is a 4-1 relation due to the symmetries with respect to both the $x_{1}$ and $x_{2}$ axes. When $\A\neq -1$, the additional symmetry (about the $x_{2}$ axis) in the canonical relation of $\Tc$ is broken. This was one of our main motivations for the results in this paper.

\section{Analysis of the  operator $\Fc$ and the imaging operator
$\Fc^{*}\Fc$ for $\A\geq 0$}
\label{sect:F}

In this section, we prove  Theorems \ref{thm:F alpha>=0}  and
\ref{thm:F*F alpha>=0}.

 \begin{proof}[Proof of Theorem \ref{thm:F alpha>=0}] We first prove
that
\[\varphi:= -\omega\left(t-\sqrt{(x_1-\alpha
s)^2+x_2^2+h^2}-\sqrt{(x_1-s)^2+x_2^2+h^2}\right)\] is a
non-degenerate phase function.  We have that $\vp$ is a
phase function in the sense of H\"{o}rmander \cite{Ho1971}
because $\nabla_{x}\varphi\neq 0$ at points where the amplitude of the
operator $\Fc$ is elliptic. The differential $\n_{x}\vp$ vanishes at a
point on the ground directly ``between'' the source and receiver and
this point is given by $ \paren{\frac{(\A+1)s}{2},0}$.  However, in a
neighborhood of such points the amplitude $a$ vanishes due to the
cutoff function $f$ in the definition of $\Fc$ given in \eqref{def:F}.
Also we have that $\nabla_{s,t}\varphi$ is nowhere $0$ since $\omega
\neq 0$. The same reason that $\n_{x}\vp$ is non-vanishing at points
where the amplitude $a$ is elliptic also gives that $\vp$ is
non-degenerate.  Since $a$ satisfies an amplitude estimate, $\Fc$ is
an FIO \cite{DuistermaatBook}.  Finally since $a$ is of order $2$, the
order of the FIO is $3/2$ \cite[Definition 3.2.2]{DuistermaatBook}.
By definition \cite[Equation (3.1.2)]{Ho1971}
\[\CF =\Big{\{}(s,t,\partial_{s,t }\varphi(x,s,t,\omega)); \ (x,-\partial_{x}\varphi(x,s,t)); \
\partial_{\omega}\varphi(x,s,t,\omega)=0\Big{\}}.\] This
establishes \eqref{def:Canonical relation}. Furthermore, it is easy to see
that $(x_{1},x_{2},s,\omega)$ is a global parametrization of $\CF$.

Now we prove the claims about the canonical left and right projections from  $\CF$, the final parts of
Theorem \ref{thm:F alpha>=0}.
In the parameterization of $\CF$, we have
\[
\pi_L(x_1, x_2, s, \omega)=\left(s,A+B,-\left(\frac{x_1-\alpha s}{A} \alpha
+\frac{x_1-s}{B} \right)\omega, -\omega\right)
\]
 and the derivative is
{\small{\[{  \lb d \pi_L\rb}= \left(\begin{matrix}
0 & 0 & 1 & 0\\
\frac{x_1- \alpha s}{A} + \frac{x_1-s}{B}& \frac{x_2}{A} +
\frac{x_2}{B} & \ast & 0 \\
-\omega\left(\frac{x_2^2+h^2}{A^3}\alpha +\frac{x_2^2+h^2}{B^3}\right) &
\omega\left(\frac{\alpha(x_1- \alpha s)x_2}{A^3} +
\frac{(x_1-s)x_2}{B^3}\right) & \ast
& \ast\\
0 & 0 & 0 & -1
\end{matrix} \right).
\]}}
Then
\Beq\label{Determinant-left-projection}
\det \lb d \pi_L\rb=-\omega x_2 \left(\frac{\alpha}{A^2} +
\frac{1}{B^2}\right) \left(1+ \frac{(\gamma_T-x) \cdot
(\gamma_R-x)}{AB}\right).
\Eeq

The third term would be zero when the unit vectors $(\gt(s)-x)/A$ and
$(\gr(s)-x)/B$ point in opposite directions, but this cannot happen
since the transmitter and receiver are above the plane of the Earth.
Also since $\A>0$, $\left(\frac{\alpha}{A^2} +
\frac{1}{B^2}\right)\neq 0$. Hence, this determinant vanishes to first
order when $x_2=0$.  This corresponds to $\Sigma_1$ given in
\eqref{def:Sigma1}.

On $\Sigma_1$ the kernel of $\lb d \pi_L\rb $ is spanned by $\frac{\partial}{\partial
x_2}  \not \subset T\Sigma_1$. So $\pi_L$ has a fold
singularity along $\Sigma_1$.

Similarly, we have,
\[
\pi_R(x_1, x_2, s, \omega)=\left(x_1,x_2,-\left(\frac{x_1- \alpha
s}{A}+\frac{x_1-s}{B} \right)\omega,-\left(\frac{x_2}{A}+\frac{x_2}{B}\right) \omega\right).
\]
Then
\[
{\lb d \pi_R\rb}= \left(\begin{matrix} 1 & 0 & 0 & 0\\
0 & 1 & 0 & 0\\
\ast & \ast & \omega(\frac{x_2^2+h^2}{A^3}\alpha +
\frac{x_2^2+h^2}{B^3})& -(\frac{x_1-\alpha s}{A} + \frac{x_1-s}{B})\\
\ast & \ast & -\omega(\frac{(x_1-\alpha s)x_2}{A^3}\alpha +
\frac{(x_1-s)x_2}{B^3}) & -(\frac{x_2}{A} + \frac{x_2}{B})
\end{matrix} \right)
\]
has the same determinant  as $\lb d \pi_{L}\rb$.Therefore $\pi_R$ drops rank simply by one on
$\Sigma_{1}$. On $\Sigma_1$, the kernel of $\pi_R$ is spanned by
$\frac{\partial}{\partial \omega}$ and $\frac{\partial}{\partial
s}$ which are tangent to $\Sigma_1 $. Thus $\pi_R$ has a blowdown
singularity along $\Sigma_1$.

\end{proof}

Next we analyze the imaging operator $\Fc^{*}\Fc$. We have the
following integral representation for $\Fc^{*}\Fc$:
\[\Fc^{*}\Fc V(x)=\int  e^{\I
\tilde{\phi}\paren{x,s,t,\omega,\tilde{\omega},y}}
 \overline{a(s,t,x,\omega)}a(s,t,y,\tilde{\omega})V(y)\D s \D t \D
\omega \D  \tilde{\omega} \D y, \]
where
\begin{align*}\tilde{\phi}
 =& \big{(}
\omega\left(t-(\norm{x-\gamma_{T}(s)}+\norm{x-\gamma_{R}(s)})\right)\\
 &\quad \ \,-\tilde{\omega}\left(t-\left(\norm{y-\gamma_{T}(s)}+\norm{y-\gamma_{R}(s)}\right)
 \right)
\big{)}
\end{align*}
After an application of the method of stationary phase in $t$ and
$\tilde{\omega}$, the Schwartz kernel of this operator becomes
\Beq\label{Kernel of composed operator} K(x,y)=\int
e^{i\Phi(y,s,x,\om)}
\tilde{a}(y,s,x,\omega)\:
\D s \D \omega.
\Eeq
 where \bel{Phase of
 kernel}\begin{aligned} \Phi(y,s,x,\om)=&\omega\big{(}
 \norm{y-\gamma_{T}(s)}+\norm{y-\gamma_{R}(s)}
\\&\qquad-(\norm{x-\gamma_{T}(s)}+\norm{x-\gamma_{R}(s)})\big{)}.
\end{aligned}\ee
 Note that $\tilde{a}\in S^{4}$ since we have assumed that
 $a\in S^{2}$.

 \begin{proposition}\label{Wavefront of composition I}  Let $\A\geq 0$.
The wavefront relation of  the kernel $K$ of $\Fc^{*}\Fc$ satisfies
\[
WF'(K)\subset \Delta \cup C_1,
\]
where $\Delta$ is the diagonal in $T^*X \times T^*X$, and $C_1$ is
given by \eqref{def:C1}. We have that $\Delta$ and $C_1$ intersect cleanly in
codimension 2 in $\Delta$ or $C_1$.  \end{proposition}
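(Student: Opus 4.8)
The plan is to compute the wavefront set of $K$ directly from the oscillatory integral representation \eqref{Kernel of composed operator} by examining the stationary points of the phase $\Phi$ in the variables $(s,\omega)$. Recall that $WF'(K)$ is contained in the set of $(x,\xi;y,\eta)$ for which there exists $(s,\omega)$ with $\partial_s\Phi = 0$, $\partial_\omega\Phi = 0$, and $\xi = \partial_x\Phi$, $\eta = -\partial_y\Phi$. The condition $\partial_\omega\Phi = 0$ says that $A(s,x)+B(s,x) = A(s,y)+B(s,y)$, i.e.\ $x$ and $y$ lie on the same ellipse $E(s,t)$. The condition $\partial_s\Phi = 0$ is $\omega\big(\partial_s(A(s,y)+B(s,y)) - \partial_s(A(s,x)+B(s,x))\big) = 0$, and since $\omega \neq 0$, this forces $x$ and $y$ to lie on ellipses with the same foci that are moreover tangent (share a common normal line direction along the $s$-family). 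First I would write out $\partial_s(A+B)$ explicitly using \eqref{def:AB} and analyze the resulting system of two equations in the unknowns $(y_1,y_2)$ given $(x_1,x_2,s)$ (and vice versa).

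The key algebraic step is to show that the only solutions of this system are $y = x$ (giving $\Delta$) and $y = \chi_1(x) = (x_1,-x_2)$ together with the corresponding covector relation (giving $C_1$). The second solution arises from the reflection symmetry of the trajectories \eqref{def:GtGr} and of the bistatic distance $R(s,x)$ about the $x_1$-axis: since $\gamma_T$ and $\gamma_R$ both lie in the plane $x_2 = 0$, we have $A(s,x_1,x_2) = A(s,x_1,-x_2)$ and likewise for $B$, so $x$ and its reflection $(x_1,-x_2)$ lie on every common ellipse and the $s$-derivatives match as well. Computing $\partial_x\Phi$ and $-\partial_y\Phi$ at $y = (x_1,-x_2)$ from \eqref{def:Canonical relation} shows the covectors are related by $\eta = (\xi_1,-\xi_2)$, matching \eqref{def:C1}. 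The main obstacle is the uniqueness claim: I would need to rule out any further solutions of the stationary-phase system. Here I would use the explicit description of $\pi_R$ from the proof of Theorem \ref{thm:F alpha>=0} — away from $\Sigma_1$, $\pi_R$ is a local diffeomorphism, so locally $(y,\eta)$ determines a unique point of $\CF$; the point is that two points of $\CF$ with the same base point $(s,t)$ on the $Y$-side must have $x$-coordinates related by the reflection, which follows from the quadratic nature of the ellipse equation in $x_2$ once $x_1$ and $s$ are fixed. This reduces the problem to showing $x_1$ is determined, which follows from the tangency condition $\partial_s\Phi=0$ combined with the strict convexity of the ellipse family (the third factor in \eqref{Determinant-left-projection} being nonzero).

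Having established $WF'(K) \subset \Delta \cup C_1$, it remains to verify that $\Delta$ and $C_1$ intersect cleanly in codimension $2$. I would compute $\Delta \cap C_1$ directly: from \eqref{def:C1}, a point is in the intersection iff $x_2 = -x_2$ and $\xi_2 = -\xi_2$, i.e.\ $x_2 = 0$ and $\xi_2 = 0$, which is a smooth submanifold of dimension $2$ (parametrized by $(x_1,\xi_1)$), hence codimension $2$ in each of the $4$-dimensional Lagrangians $\Delta$ and $C_1$. For the clean intersection condition (Definition \ref{def:cleanly}) I would check $T(\Delta \cap C_1) = T\Delta \cap TC_1$: both sides are readily seen to be the span of $\partial_{x_1}$ (on the diagonal) and $\partial_{\xi_1}$, since the tangent space to $\Delta$ is $\{(\dot x,\dot\xi;\dot x,\dot\xi)\}$ and the tangent space to $C_1$ at a point of the intersection is $\{(\dot x_1,\dot x_2,\dot\xi_1,\dot\xi_2;\dot x_1,-\dot x_2,\dot\xi_1,-\dot\xi_2)\}$, and their intersection forces $\dot x_2 = \dot\xi_2 = 0$. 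This is routine linear algebra once the defining equations are in hand, so I do not anticipate difficulty there; the genuine content of the proposition is the wavefront containment and, within it, the uniqueness of the reflection as the only off-diagonal contribution.
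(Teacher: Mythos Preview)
Your overall strategy --- compute $\CF^t\circ\CF$ by solving the isorange and isodoppler constraints for $y$ in terms of $x$, then check clean intersection --- is correct and matches what the paper does. The clean-intersection paragraph is fine. The real issue is in your uniqueness argument, where two things go wrong.

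First, the projection you invoke is the wrong one. Knowing that $\pi_R$ is a local diffeomorphism away from $\Sigma_1$ tells you that $(y,\eta)$ locally determines a point of $\CF$, but what you actually need is the preimage structure of $\pi_L$: given $(s,t,\sigma,\tau)$, how many $(x,\xi)$ lie over it? That is a question about the (global) multiplicity of the fold map $\pi_L$, not about $\pi_R$. Second, and more seriously, you cite the \emph{third} factor in \eqref{Determinant-left-projection}, namely $1+\frac{(\gamma_T-x)\cdot(\gamma_R-x)}{AB}$, as the obstruction. That factor is always positive, independent of the sign of $\alpha$, so it cannot be what distinguishes $\alpha\geq 0$ from $\alpha<0$. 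The relevant factor is the \emph{second} one, $\dfrac{\alpha}{A^2}+\dfrac{1}{B^2}$: one can check that, along a fixed ellipse $E(s,t)$, the derivative of the isodoppler function $x\mapsto \alpha\frac{x_1-\alpha s}{A}+\frac{x_1-s}{B}$ with respect to $x_1$ (equivalently, with respect to the angular prolate coordinate) is a positive multiple of $\frac{\alpha}{A^2}+\frac{1}{B^2}$. For $\alpha\geq 0$ this is strictly positive, giving the monotonicity you want and hence the uniqueness of $x_1$; for $\alpha<0$ it can vanish, and exactly where it does you pick up the extra Lagrangian $C_2$ in \eqref{def:C2}. As written, your argument would (incorrectly) exclude $C_2$ for all $\alpha$.

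The paper takes a different route to the same uniqueness statement: it introduces prolate spheroidal coordinates \eqref{Prolate coordinate system} with foci $\gamma_T(s),\gamma_R(s)$, in which the isorange equation becomes simply $\rho=\rho'$, and the isodoppler equation \emph{factors} as
\[
(\cos\phi-\cos\phi')\Big[(\alpha+1)(\cosh^2\rho+\cos\phi\cos\phi')-(\alpha-1)\cosh\rho(\cos\phi+\cos\phi')\Big]=0,
\]
the bracket being equivalent to $\dfrac{\alpha}{A\tilde A}+\dfrac{1}{B\tilde B}=0$. For $\alpha\geq 0$ the bracket is manifestly positive, forcing $\cos\phi=\cos\phi'$, hence $x_1=y_1$, and then $x_2=\pm y_2$. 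This factored form is what your monotonicity argument would recover after differentiation, but the paper's coordinate choice makes the dichotomy between $\Delta\cup C_1$ and the extra piece $C_2$ completely explicit and reusable for the $\alpha<0$ analysis in Section~\ref{sect:alpha-}. If you want to keep your approach, replace the reference to $\pi_R$ and the third factor by the monotonicity computation tied to $\frac{\alpha}{A^2}+\frac{1}{B^2}>0$; otherwise the argument does not close.
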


\bpr
Let $(s, t, \sigma, \tau;
y, \eta) \in \CF$. Then we have \bel{Canonical relation of G}\begin{aligned}
&t=\sqrt{(y_1-\alpha s)^2+y_2^2+h^2}+ \sqrt{(y_1-s)^2+y_2^2+h^2} \\
&\sigma= \tau\left(\frac{y_1-\alpha s}{\sqrt{(y_1- \alpha
s)^2+y_2^2+h^2}} \alpha
+\frac{y_1-s}{\sqrt{(y_1-s)^2+y_2^2+h^2}}\right)
\\
&\eta_1=\tau \left(\frac{y_1- \alpha s}{\sqrt{(y_1- \alpha
s)^2+y_2^2+h^2}}+\frac{y_1-s}{\sqrt{(y_1-s)^2+y_2^2+h^2}}\right)
\\
&\eta_2=\tau \left(\frac{y_2}{\sqrt{(y_1- \alpha
s)^2+y_2^2+h^2}}+\frac{y_2}{\sqrt{(y_1-s)^2+y_2^2+h^2}}\right)
\end{aligned}\end{equation}

and $(x, \xi; s, t, \sigma, \tau) \in C^t$ implies
\begin{align}\label{Canonical relation of G*}
\notag&t=\sqrt{(x_1-\alpha s)^2+x_2^2+h^2}+
\sqrt{(x_1-s)^2+x_2^2+h^2}\\
\notag&\sigma= \tau\left(\frac{x_1-\alpha s}{\sqrt{(x_1- \alpha
s)^2+x_2^2+h^2}}\alpha +\frac{x_1-s}{\sqrt{(x_1-s)^2+x_2^2+h^2}}\right)\\
&\xi_1=\tau \left(\frac{x_1-\alpha s}{\sqrt{(x_1-\alpha
s)^2+x_2^2+h^2}}+\frac{x_1-s}{\sqrt{(x_1-s)^2+x_2^2+h^2}}\right)\\
\notag&\xi_2=\tau\left (\frac{x_2}{\sqrt{(x_1- \alpha
s)^2+x_2^2+h^2}}+\frac{x_2}{\sqrt{(x_1-s)^2+x_2^2+h^2}}\right)
\end{align}

From the first two relations in \eqref{Canonical relation of G} and
\eqref{Canonical relation of G*}, we have
\bel{Isorange curves}
\begin{gathered}
\sqrt{(y_1-\alpha s)^2+y_2^2+h^2}+
\sqrt{(y_1-s)^2+y_2^2+h^2}\qquad
\\
\qquad=\sqrt{(x_1- \alpha s)^2+x_2^2+h^2}+
\sqrt{(x_1-s)^2+x_2^2+h^2}
\end{gathered}\end{equation}

and \bel{Isodoppler curves} \begin{gathered}\frac{y_1-\alpha s}{\sqrt{(y_1-\alpha
s)^2+y_2^2+h^2}}\alpha
+\frac{y_1-s}{\sqrt{(y_1-s)^2+y_2^2+h^2}}\qquad\\
\qquad=
\frac{x_1- \alpha s}{\sqrt{(x_1-\alpha s)^2+x_2^2+h^2}}\alpha
+\frac{x_1-s}{\sqrt{(x_1-s)^2+x_2^2+h^2}}.\end{gathered}\ee

 We will use prolate spheroidal coordinates with foci $\gr(s)$ and
$\gt(s)$ to solve for $x$ and $y$. We let
\begin{align}\label{Prolate coordinate system}
\begin{array}{ll}
x_{1}= \frac{1+\alpha}{2}s + \frac{1-\alpha}{2}s\cosh \rho \cos \phi &
y_{1}=\frac{1+\alpha}{2}s + \frac{1-\alpha}{2}s\cosh \rho' \cos \phi'\\
x_{2}=\frac{1-\alpha}{2}s\sinh \rho \sin \phi \cos \theta & y_{2}=
\frac{1-\alpha}{2}s\sinh \rho' \sin \phi' \cos \theta'\\
x_{3}=h+ \frac{1-\alpha}{2}s\sinh \rho \sin \phi \sin \theta &
y_{3}=h+ \frac{1-\alpha}{2}s\sinh \rho' \sin \phi' \sin \theta'
\end{array}
\end{align}
with $\rho$ and $\rho'$ positive, $\phi$ and $\phi'$ in the interval
$[0,\pi]$ and $\theta$ and $\theta'$ in $[0,2\pi]$.  In this case
$x_3=0$ and we use it to solve for $h$. Hence
$$A^2=(x_1-\alpha s)^2+x_2^2+h^2=\frac{(1-\alpha)^2}{4}s^2(\cosh \rho+
\cos \phi)^2$$ and
$$B^2=(x_1-s)^2+x_2^2+h^2=\frac{(1-\alpha)^2}{4}s^2(\cosh \rho- \cos
\phi)^2.$$
Noting that $s>0$ and $\cosh\rho\pm\cos\phi>0$, the first
relation given by \eqref{Isorange curves} in these coordinates becomes
$$s(\cosh \rho- \cos \phi)+s(\cosh \rho+ \cos \phi)=s(\cosh \rho'- \cos \phi')+s(\cosh \rho'+ \cos \phi') $$ from which we get $$\cosh \rho=\cosh \rho' \Rightarrow \rho=\rho'.$$

The second relation, given by \eqref{Isodoppler curves}, becomes
{\small{\[\frac{\cosh \rho \cos \phi -1}{\cosh \rho-\cos \phi}
+\alpha \frac{\cosh \rho \cos \phi +1}{\cosh \rho+\cos
\phi}=\frac{\cosh \rho \cos \phi' -1}{\cosh \rho-\cos \phi'} + \alpha
\frac{\cosh \rho \cos \phi' +1}{\cosh \rho+\cos \phi'} \]}}
After simplification we get \bel{composition-in-coords}\begin{aligned}(\cos \phi -\cos
\phi')&[(\alpha+1)(\cosh^2 \rho+\cos \phi \cos
\phi')\\
&\qquad-(\alpha-1)\cosh\rho(\cos \phi+ \cos \phi')] =0\end{aligned}\ee
which implies either that \bel{C1} \cos \phi =\cos \phi' \mbox{ which
implies } \phi =\phi'\ee (since we can assume $\phi\in [\pi,2\pi]$ for
points on the ground) or that \bel{C2} (\alpha+1)(\cosh^2 \rho+\cos
\phi \cos \phi')-(\alpha-1)\cosh\rho(\cos \phi+ \cos
\phi')=0=\frac{\alpha}{A \tilde{A}}+\frac{1}{B\tilde{B}} \ee where
$\tilde{A}$ and $\tilde{B}$ are defined as in \eqref{def:AB} but
evaluated at $(s,y)$ and the third term in the equality is equivalent
to the first term.

We consider Conditions \eqref{C1} and \eqref{C2} separately.  First,
assume Condition \eqref{C1} holds. Then we have $\phi=\phi'$.  In this
case, \[\cos \theta= \pm \sqrt{1-\frac{h^2}{s^2\sinh^2 \rho \sin^2
\phi}} =\pm \cos \theta'\] and note that $x_3=0$ implies that $\sin
\phi\neq 0$. We also remark that it is enough to consider $\cos
\theta=\cos \theta'$ as no additional relations are introduced by
considering $\cos\theta=-\cos\theta'$ over the relations we now
address.  Now we go back to $x$ and $y$ coordinates.  If
$\theta=\theta'$ then $x_1=y_1, \ x_2=y_2, \xi_i=\eta_i$ for $i=1,2$.
In this case, the composition, $\CF^t \circ \CF \subset \Delta=\{ (x,
\xi; x, \xi) \} $.  If $\theta'=\pi-\theta$ then $x_1=y_1, \ -x_2=y_2,
\xi_1=\eta_1, -\xi_2=\eta_2$. For these points the composition, $\CF^t
\circ \CF$ is a subset of $C_1$ in \eqref{def:C1}. Note that
\eqref{C2} has no solutions for $\alpha\geq 0$. The statements about
clean intersection are the same as the one given in
\cite{AFKNQ:common_midpoint}. This concludes the proof of Proposition
\ref{Wavefront of composition I}.

\epr

\begin{proof}[Proof of Theorem \ref{thm:F*F alpha>=0}]
  We will use the iterated regularity method (Theorem
  \ref{thm:iterated regularity}) to show that the kernel of
  $\Fc^*\Fc \in I^{3,0}(\Delta, C_1)$.  We  consider the generators of the
  ideal of functions that vanish on $\Delta \cup C_1$ \cite{RF1}. These are given by
\bel{generators:F}\begin{aligned}
&\tilde{p}_{1}=x_{1}-y_{1},\quad
\tilde{p}_{2}=x_{2}^{2}-y_{2}^{2},\quad
\tilde{p}_{3}=\xi_{1}-\eta_{1},\\
&\tilde{p}_{4}=(x_{2}-y_{2})(\xi_{2}+\eta_{2}),\quad\tilde{p}_{5}=(x_{2}+y_{2})(\xi_{2}-\eta_{2}),\\
&\tilde{p}_{6}=\xi_{2}^{2}-\eta_{2}^{2}.
\end{aligned}\ee
We show in Appendix \ref{proofs:iterated regularity} that each
$\tilde{p}_{i}$ can be expressed as sums of products of
$\partial_\omega{\Phi}$ and $\partial_s{\Phi}$ with smooth functions.
Let $p_{i}=q_{i}\tilde{p}_{i}$, for $1\leq i\leq 6$, where $q_{1},
q_{2}$ are homogeneous of degree $1$ in $(\xi,\eta)$, $q_{3},q_{4}$
and $q_{5}$ are homogeneous of degree $0$ in $(\xi,\eta)$ and $q_{6}$
is homogeneous of degree $-1$ in $(\xi,\eta)$. Let $P_{i}$ be
pseudodifferential operators with principal symbols $p_{i}$ for $1\leq
i\leq 6$. The $\tilde{p}_i$ and arguments using iterated
regularity are similar to those used in \cite[Thm.\ 1.6]{RF1}  and
in \cite[Thm.\ 4.3]{Krishnan-Quinto}.

 We use the same arguments as in \cite{AFKNQ:common_midpoint}  to show that the orders $p,l$ from $I^{p,l}(\Delta, C_1)$ are $p=3$ and $l=0$.
\end{proof}

\section{Analysis of the forward operator $\Gc$ and the imaging operator
$\Gc^{*}\Gc$ for $\A< 0$}
\label{sect:alpha-}

In this section, we analyze the operator $\Gc$ in \eqref{def:G}. In
\cite{AFKNQ:common_midpoint} we
analyzed  the case $\alpha=-1$.  For the case with $\alpha<0$, we  make
another simplification:
\[\text{ Assume \ \ } \alpha<-1.\]
If $\alpha\in (-1,0)$, then we can reduce it to the case $\alpha<-1$
by using the diffeomorphisms $(x_1,x_2)\mapsto (-x_1,x_2)$ and
$(s,t)\mapsto (s/|\alpha|,t)$.

We first prove Theorem \ref{thm:G alpha<0}.
\bpr[Proof of Theorem \ref{thm:G alpha<0}] In the proof of this theorem, most of the statements are already proved in Theorem \ref{thm:F alpha>=0}. We just prove the statements regarding the properties of the projection maps $\pi_{L}$ and $\pi_{R}$. Recall from the proof of Theorem \ref{thm:F alpha>=0} that
\[{\D \pi_L}= \left(\begin{matrix}
0 & 0 & 1 & 0\\
\frac{x_1- \alpha s}{A} + \frac{x_1-s}{B}& \frac{x_2}{A} +
\frac{x_2}{B} & \ast & 0 \\
-\omega\left(\frac{x_2^2+h^2}{A^3}\alpha +\frac{x_2^2+h^2}{B^3}\right) &
\omega\left(\frac{\alpha(x_1- \alpha s)x_2}{A^3} +
\frac{(x_1-s)x_2}{B^3}\right) & \ast
& \ast\\
0 & 0 & 0 & 1
\end{matrix} \right)
\]
and $$\det \D \pi_L=\omega x_2 \left(\frac{\alpha}{A^2} +
\frac{1}{B^2}\right) \left(1+ \frac{(\gamma_T-x) \cdot
(\gamma_R-x)}{AB}\right)$$

Clearly this determinant drops rank
when the first term, $x_2=0$.  This corresponds to $\Sigma_1$ given by
\eqref{def:Sigma1}.

The determinant also drops rank when the
second term is zero, which can occur when $\alpha<0$; this corresponds
to $\Sigma_2$ given by \eqref{def:Sigma2}. Note that $\pi_L$ drops rank by 2 at the
intersection points of $\Sigma_1$ and $\Sigma_2$ (where $x_2=0$) but we exclude them using the cutoff function $g$ described preceding  \eqref{def:tpm}.

On $\Sigma_2$, using the first, second,
and fourth row of $d\pi_L$, the kernel is $\frac{\partial}{\partial
x_1}-\frac{\frac{x_1-\alpha s}{A}+\frac{x_1-s}{B}}{x_2(\frac{1}{A} +
\frac{1}{B})} \frac{\partial}{\partial x_2}$ which applied to
$\Sigma_2$ gives us $\frac{2 s (\alpha-1)}{(\alpha
+1)(\frac{1}{A}+ \frac{1}{B})} (\frac{\alpha}{A} -\frac{1}{B})$. We
have that $s \neq 0$ and $\alpha + 1 \neq 0$. If $\frac{\alpha}{A}
-\frac{1}{B}=0$ then using $\frac{\alpha}{A^2} +\frac{1}{B^2}=0$ we
get $\frac{\alpha (\alpha+1)}{A^2}=0$ which is a contradiction. Thus
 $\ker\paren{\D  \pi_L}  \not \subset T\Sigma_2$ which implies that $\pi_L$
has a fold singularity along $\Sigma_2$.  \vskip .5 cm \par Similarly,

\[
{\D \pi_R}= \left(\begin{matrix} 1 & 0 & 0 & 0\\
0 & 1 & 0 & 0\\
\ast & \ast & -\omega(\frac{x_2^2+h^2}{A^3}\alpha +
\frac{x_2^2+h^2}{B^3})& (\frac{x_1-\alpha s}{A} + \frac{x_1-s}{B})\\
\ast & \ast & \omega(\frac{(x_1-\alpha s)x_2}{A^3}\alpha +
\frac{(x_1-s)x_2}{B^3}) & (\frac{x_2}{A} + \frac{x_2}{B})
\end{matrix} \right)
\]
has the same determinant up to sign and so $\pi_R$ drops rank by one on
$\Sigma$.
On $\Sigma_2$, using the last row of
$d\pi_R$, the kernel is $ \frac{\partial}{\partial s}-
\omega\frac{\frac{x_1-\alpha s}{A^3} \alpha
+\frac{x_1-s}{B^3}}{\frac{1}{A} + \frac{1}{B}}
\frac{\partial}{\partial \omega}$ which applied to $\Sigma_2$ gives $2
\alpha(s-\frac{2x_1}{\alpha+1})$. If $s=\frac{2 x_1}{\alpha +1}$ then
from $\frac{\alpha}{A^2} +\frac{1}{B^2}=0$ we obtain
$s^2(\frac{\alpha-1}{2})^2+x_2^2+h^2=0$ which is a contradiction.
Hence  $\ker\paren{\D\pi_R} \not \subset T\Sigma_2$ which implies that
$\pi_R$ has a fold singularity along $\Sigma_2$ as well. This completes the proof of Theorem \ref{thm:G alpha<0}.
\epr

\begin{proposition}\label{Wavefront  of composition II} For $\alpha<0$,
the wavefront relation of the kernel $K$ of $\Gc^{*}\Gc$ satisfies,
\[
WF'(K)\subset \Delta \cup C_{1}\cup C_2,
\]
where $\Delta$ is the diagonal in $T^*X \times T^*X$, $C_{1}$ is given
by \eqref{def:C1} and $C_{2}$ is defined as
\bel{def:C2} \begin{aligned}C_2 =\Big\{(x,\xi;y,\xi')&\st
\exists
(s,t),\  (x,\xi)\in N^*(E(s,t)),\,
(y,\xi')\in N^*(E(s,t)),\\
&\frac{\alpha}{A\At}+\frac{1}{B \Bt}=0, (x_2,y_2)\neq (0,0)\Big\},\end{aligned} \ee where
$A=A(s,x)$ and $\At = A(s,y)$, $B=B(s,x)$ and $\Bt=B(s,y)$ and
$E(s,t)$ is given by \eqref{def:E}. Furthermore,
 $\Delta$ and $C_{1}$ intersect
cleanly in codimension 2, $\Delta$ and $C_2$ intersect
cleanly in codimension 1, $C_2$ and $C_{1}$ intersect
cleanly in codimension 1, and  $ \Delta \cap C_1 \cap C_2=
\emptyset$. \end{proposition}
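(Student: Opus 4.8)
=== PROOF PROPOSAL ===

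The plan is to mimic closely the structure of the proof of Proposition \ref{Wavefront of composition I}, now keeping track of the extra branch \eqref{C2} that becomes non-trivial when $\alpha<0$. First I would write down the composition $\CG^t\circ\CG$ by intersecting the relations \eqref{Canonical relation of G} and \eqref{Canonical relation of G*} (with $\CF$ replaced by $\CG$), obtaining the isorange equation \eqref{Isorange curves} and the isodoppler equation \eqref{Isodoppler curves}. Passing to the prolate spheroidal coordinates \eqref{Prolate coordinate system} with foci $\gr(s),\gt(s)$, the isorange equation again forces $\rho=\rho'$, and the isodoppler equation reduces, exactly as before, to the factored identity \eqref{composition-in-coords}, whose two factors are \eqref{C1} (which yields, after untangling $\theta$ versus $\theta'$, the $\Delta$ and $C_1$ contributions) and \eqref{C2}. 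The new point is that for $\alpha<0$ the second factor $\frac{\alpha}{A\tilde A}+\frac{1}{B\tilde B}=0$ does have solutions; I would identify the resulting set with $C_2$ as defined in \eqref{def:C2}, using that $(x,\xi)$ and $(y,\xi')$ both lie in $N^*(E(s,t))$ since $t=A+B=\tilde A+\tilde B$ from $\rho=\rho'$, and that the covectors are the image under $\pi_L^{-1}$ of the same $(s,t,\sigma,\tau)$. This shows $WF'(K)\subset\Delta\cup C_1\cup C_2$.

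Next I would verify that $C_2$ is a genuine canonical relation and, more specifically, a two-sided fold, which is really a restatement of parts (4)--(5) of Theorem \ref{thm:G alpha<0}: since $\pi_L$ has a fold along $\Sigma_2$ and $\pi_R$ has a fold along $\Sigma_2$, the relation $C_2=\pi_L(\Sigma_2)\times\pi_R(\Sigma_2)$ obtained by composing $\CG^t$ and $\CG$ over $\Sigma_2$ is a folding canonical relation in the sense of Def.\ \ref{def:fold}; this is the mechanism behind Theorem \ref{thm:Felea Nolan}. I would note that $C_2$ is parameterized away from $x_2=y_2=0$ precisely because $\Sigma_2$ excludes $x_2=0$.

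Then comes the clean-intersection bookkeeping. For $\Delta\cap C_1$: this is unchanged from Proposition \ref{Wavefront of composition I} (and from \cite{AFKNQ:common_midpoint}), namely the set $\{x_2=0\}$, a clean intersection in codimension $2$. For $\Delta\cap C_2$: a point lies in both iff $(x,\xi)=(y,\xi')$ and $\frac{\alpha}{A^2}+\frac{1}{B^2}=0$, i.e.\ $x\in\Stwox(s)$ for some $s$ — this is the image under $\pi_R$ of $\Sigma_2$, a smooth hypersurface in $X$ together with its conormal directions, so the intersection is clean in codimension $1$; I would check cleanliness by computing the relevant tangent spaces, exactly as in the model Example \ref{example:basic}. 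For $C_1\cap C_2$: a point is in both iff $y=(x_1,-x_2)$, $\xi'=(\xi_1,-\xi_2)$ and $\frac{\alpha}{A\tilde A}+\frac{1}{B\tilde B}=0$; since reflecting $x_2\mapsto -x_2$ fixes $A,B$, this again says $x\in\Stwox(s)$, giving a clean intersection in codimension $1$ by the same tangent-space check. Finally $\Delta\cap C_1\cap C_2=\emptyset$: a point in $\Delta\cap C_1$ has $x_2=0$, while a point in $C_2$ requires $(x_2,y_2)\neq(0,0)$; since on $\Delta\cap C_1$ we have $y_2=-x_2=0$ too, the triple intersection is empty.

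The main obstacle I expect is the clean-intersection verifications for the pairs involving $C_2$: one must exhibit explicit local coordinates (or the canonical transformation into the model pair of Example \ref{example:basic}) in which $\Delta$, or $C_1$, and $C_2$ become the standard cleanly intersecting Lagrangians, and then confirm $T(M\cap N)=TM\cap TN$ by a direct computation of the Jacobians of the parameterizations near $\Sigma_2$. The fact that $\Sigma_2$ is cut out by the single equation $\frac{\alpha}{A^2}+\frac{1}{B^2}=0$ with non-vanishing differential (shown in the proof of Theorem \ref{thm:G alpha<0}) is what makes the codimension exactly $1$ and drives the cleanness; the computation is routine but must be done carefully, in particular keeping track of the factor $1+\frac{(\gt-x)\cdot(\gr-x)}{AB}$ which never vanishes and hence does not interfere.
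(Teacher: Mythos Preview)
Your proposal is correct and follows essentially the same approach as the paper: reduce to the computation already done in Proposition~\ref{Wavefront of composition I}, observe that the second factor \eqref{C2} now has solutions and yields $C_2$, and then check the clean intersections pairwise. Two minor remarks: your description ``$C_2=\pi_L(\Sigma_2)\times\pi_R(\Sigma_2)$'' is not quite right as written (it is not a Cartesian product but the flowout relation $\CG^t\circ\CG$ over $\Sigma_2$); and for the cleanliness of $\Delta\cap C_2$ and $C_1\cap C_2$ the paper simply invokes the Melrose--Taylor normal form \cite{Melrose-Taylor} rather than doing a direct tangent-space check, though either route works.
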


 \bpr In fact, this proposition is already
proved in Proposition \ref{Wavefront of composition I}. Here, unlike
the situation in Proposition \ref{Wavefront of composition I}, there
is a nontrivial contribution to the wavefront of the composition from
\eqref{C2}. Hence for $\A<0$, we have that
\[
WF'(K)\subset \Delta \cup C_{1}\cup C_2,
\]
To show that no point in $C_2$ has $x_2=0=y_2$, one uses
\eqref{C2} and that $x_3=0=y_3$ in \eqref{Prolate coordinate system}.
Finally, note that $\Delta \cap C_1 \cap C_2=\emptyset$ since we
exclude the points of intersection of $\Sigma_1$ and $\Sigma_2$ due to
the cutoff function $g$ defined in Section \ref{sect:results alpha-}.
One can show that $C_2$ is an immersed conic Lagrangian manifold
that is a two-sided fold using Definition \ref{def:fold} and the proof
of Theorem \ref{ImagingSection:I(p,l) regularity} part
\eqref{G2*G2}).

Using Def.\ \ref{def:cleanly} and the calculations above, one can also
show that these manifolds intersect in the following ways:
\begin{enumerate}[(a)]
\item\label{Delta-C1} $\Delta$ intersects $C_1$ cleanly in codimension
2, \[\Delta \cap C_1=\left\{(x, \xi; x, \xi)\in \Delta \st
x_{2}=0=\xi_{2} \right\} .\] This is part of Proposition 5.1 in
\cite{AFKNQ:common_midpoint}.

\item $\Delta$ intersects $C_2$ cleanly  in
codimension 1, \[ \begin{aligned}\Delta \cap C_2&=\left\{(x, \xi; x, \xi)\in \Delta \st
\frac{\alpha}{A^2}+\frac{1}{B^2}=0 \right\}
\\&=\{(x,\xi;y,\eta)\in C_2\st
x_1=y_1,\, x_2=y_2\}.\end{aligned} \]  Note that the condition that $x_1=y_1$ in
$C_2$ implies $x_2 = \pm y_2$ and so the condition $x_2=y_2$ does not
increase the codimension of the intersection.  Using
 \cite{Melrose-Taylor}, one can show the intersection is clean.

 \item $C_1$ intersects $C_2$ cleanly in codimension 1, \[ C_1 \cap
C_2=\left\{(x, \xi; y, \eta) \in C_1\st \frac{\alpha}{A
\tilde{A}}+\frac{1}{B\tilde{B}}=0\right\}.\]  Using
 \cite{Melrose-Taylor}, one can show the intersection is clean.

\item $\Delta \cap C_1 \cap C_2=\emptyset$ since
we exclude the points of intersection of $\Sigma_1$ and $\Sigma_2$.
 \end{enumerate}

This completes the proof of the proposition.
\epr

For the rest of the proof, we focus on $C_2$.  Let $\beta=\sqrt{-\A}$,
then $\beta>1$.  Let $(x,\xi,y,\xi')\in C_2$.  then, by \eqref{def:C2}
there is an $(s,t)$ such that $x$ and $y$ are both in $E(s,t)$ and
\[\frac{\B B}{A}= \frac{\At}{\B \Bt}\] where $A,\At,B,\Bt$ are
given below \eqref{def:C2}. Therefore, if $(x,\xi,y,\xi')\in C_2$
then \bel{important}\begin{aligned}\exists (s,t)\in (0,\infty)^2,\
\exists k\in (0,\infty), \quad x, y\in E(s,t) \quad \frac{\B
B(s,x)}{A(s,x)}=k,\quad \frac{\B
B(s,y)}{A(s,y)}=\frac{1}{k}.\end{aligned}\ee A calculation shows that
if $k\neq \B$ then \bel{circle condition}\begin{gathered} \frac{\B
B(s,x)}{A(s,x)}=k\ \ \Leftrightarrow \ \ \lb x_1 - \frac{\B^{2}s
(1+k^{2})}{\B^{2}-k^{2}}\rb^{2}+x_2^{2}=\frac{\B^{2}
s^{2}k^{2}(\B^{2}+1)^{2}}{(\B^{2}-k^{2})^{2}}-h^{2} \end{gathered}\ee
If $k=\B$, then $\B B/A=k$ is the equation of a vertical line with
$x_1$ intercept $(1-\B^2)s/2$.  \medskip

We first use this characterization of $C_2$ to prove Statements
\eqref{O1} and \eqref{O3} of Theorem \ref{thm:G*G}. As already
mentioned, the diagonal relation $\Delta$ and $C_{1}$ given by
\eqref{def:C1} intersect cleanly in codimension 2 on either
submanifold. Hence there is a well-defined $I^{p,l}$ class associated
to $\Delta$ and $C_{1}$.



  \subsection{Proof of Theorem \textbf{\ref{thm:G*G},}\label{proof:O1}
Statement \eqref{O1}}\ \
Recall from statement \eqref{O1} of this
theorem that the function $r_1$ is a cutoff function compactly
supported in \bel{def:O1}O_1 = \sparen{(s,t)\st 0<s<s_0 \mbox{ and }
0<t<\infty}\ee where $s_0$ (see \eqref{def:s0-alpha}) can be written
in terms of $\B$ as \bel{def:s0} s_0:
=\frac{h(\B^2-1)}{\B(\B^2+1)}.\ee

We show that for $(s,t)\in O_1$, there are no $x$ and $y$ satisfying
\eqref{important}.  Therefore, $C_2\cap WF'(K_1)=\emptyset$
where $K_1$ is the Schwartz kernel of $\Gc^*r_1 \Gc$.

So, assume for some $(s,t)\in O_1$ \eqref{important} holds.  Then, the
right-hand side of \eqref{circle condition} can be estimated by
\[
\frac{\B^{2}
s^{2}k^{2}(\B^{2}+1)^{2}}{(\B^{2}-k^{2})^{2}}-h^{2}<h^{2}\lb \frac{(\B^2-1)^2k^2-(\B^2-k^2)^2}{(\B^2-k^2)^2}\rb= h^{2}\lb \frac{(k^2-1)(\B^4-k^2)}{(\B^2-k^2)^2}\rb.
\]
  Since $\B>1$, if $0<k\leq 1$, this calculation and \eqref{circle
condition} shows that $\frac{\B B}{A}=k$ has no solution.  Therefore
there are no solutions to \eqref{important} if $0<k\leq 1$. Now
assume for some $k>1$, $\frac{\B B(s,x)}{A(s,x)} = k$, then for
\eqref{important} to have a solution that means that there must be a
point $y\in E(s,t)$ with $\frac{\B B(s,y)}{A(s,y)} = 1/k$.
However, this is impossible since $0<1/k\leq 1$. This shows that for
$(s,t)\in O_1$, there is no solution to \eqref{important}.

Now following the
proof of Theorem \ref{thm:F*F alpha>=0}, we achieve the result of
Statement \eqref{O1}.
\qed

\subsection{Proof of Theorem \textbf{\ref{thm:G*G},}\label{proof:O3}
Statement \eqref{O3}}\ \
Recall that the cutoff function $r_3(s,t)$ is compactly supported in
\Beq\label{def:O3} O_3 = \Big{\{}(s,t): s_0<s<\infty \mbox{ and }
t<t_{s}^{-} \mbox{ or } t>t_{s}^{+}\Big{\}} \Eeq where $t_{s}^{\pm}$
is defined in \eqref{def:tpm}.  The operator we analyze in this part
of the proof is $\Gc^* r_3 \Gc$.

We define the following set \bel{def:C(s,k)} C(s,k):=\sparen{x\st
\frac{\B B(s,x)}{A(s,x)}=k}.  \ee Note that $C(s,1)= \Stwox(s)$,
$C(s,\B)$ is the vertical line $x_1 = (1+\A)s/2=(1-\B^2)s/2$, and if
$k$ is small enough, $C(s,k)=\emptyset$.  By \eqref{circle condition},
when $k\neq \B$ and $C(s,k)\neq \emptyset$ then $C(s,k)$ is the circle
centered at $\paren{\frac{\B^{2}s (1+k^{2})}{\B^{2}-k^{2}},0}$ and of
radius \bel{def:r} r(s,k):=\sqrt{\frac{\B^{2}
s^{2}k^{2}(\B^{2}+1)^{2}}{(\B^{2}-k^{2})^{2}}-h^{2}}.\ee

Let $(s,t)\in O_3$.  If there were a solution $(x,y)$ to
\eqref{important} for some $k$, then $x\in E(s,t)\cap C(s,k)$ (as $\B
B/A=k$ on $C(s,k)$)
\emph{and} $y\in E(s,t)\cap C(s,1/k)$.  If $t>t_s^+$ then the ellipse
$E(s,t)$ encloses $\Stwox(s)$ by a calculation.  Therefore, by the
final statement of Lemma \ref{lemma:circles}, $E(s,t)$ meets no circle
$C(s,k)$ for $k\in(0,1]$ and so there is no solution to
\eqref{important}.  Now, if $t<t_s^-$ then the ellipse $E(s,t)$ is
enclosed by $\Stwox(s)$ and, by the final statement of Lemma
\ref{lemma:circles}, $E(s,t)$ meets no $C(s,k)$ for $k\in(1,\infty)$
and so there is no solution to \eqref{important} in this case, too.
Therefore, $C_2\cap WF'(K_3)=\emptyset$ where $K_3$ is the
Schwartz kernel of $\Gc^*r_3 \Gc$. Now proceeding as in the proof of
Theorem \ref{thm:F*F alpha>=0}, we complete the proof of Statement
\eqref{O3} of Theorem \ref{thm:G*G}.  \qed

The rest of this section is devoted to the proof of Statement
\eqref{O2} of Theorem \ref{thm:G*G}.

\subsection{Proof of Theorem \textbf{\ref{thm:G*G}},
Statement \eqref{O2}}\label{proof:O2}\ \ The reconstruction operator we consider in
statement \eqref{O2} of Theorem \ref{thm:G*G} is $\Gc^* r_2 \Gc$ where
the mute $r_2$ has compact support in \bel{def:O2}O_2=\{(s,t):
s_0<s<\infty\ \text{ and }\ t_{s}^{-}<t<t_{s}^{+}\}\ee where
$t_{s}^{\pm}$ is defined in \eqref{def:tpm} and where $s_0$ is defined
by \eqref{def:s0}.

Recall that the canonical relation of $\Gc$ drops rank on the union of
two sets, $\Sigma_{1}$ and $\Sigma_{2}$. Accordingly, we decompose
$\Gc$ into components such that the canonical relation of each
component is either supported near a subset of the union of these two
sets, one of these two sets or away from both these sets. To do this,
we define several cutoff functions.

\subsubsection{The primary cutoff functions $\psi_1$ and
$\psi_2$}\label{sect:cutoffs}

The cutoff
 $\psi_1(x)$ will be equal to $1$ near the
$x_1$-axis and zero away from it, and  $\psi_2(s,x)$ will be
 equal to one near $\Stwox(s)$ and equal to zero away from it as in
 Figure \ref{fig:supp psis}.
\begin{figure}[h!]
\begin{center}\includegraphics[width=4in]{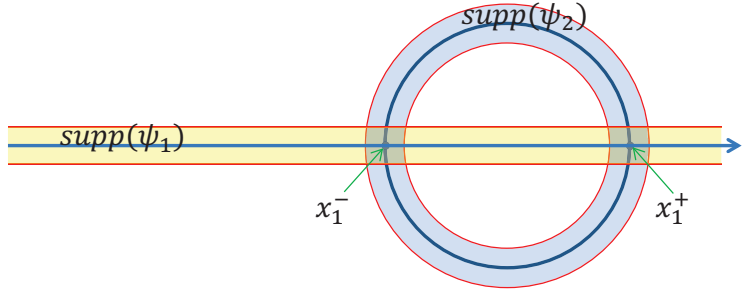}
\end{center}\caption{Picture of $\supp(\psi_1)$ and $\supp(\psi_2)$.
Note that the middle circle is $\Stwox(s)$ and the circles are
not exactly concentric.}\label{fig:supp psis}
\end{figure}

To define these functions precisely, we need to set up some
preliminary relations.  Because the mute function $r_2$ is zero near
$s_0$ and has compact support, there is an $s_1> s_0$ such that $r_2$
is zero for $s\leq s_1$ and all $t$.  Because the radius $r(s_1,1)>0$
and the function $r$ is continuous, there is a $k_1\in (0,1)$ such
that $r(s_1,k_1)>0$.  Since $r$ (see \eqref{def:r}) is an increasing
function in $s$ and $k$ separately, we can choose $\eps>0$ such that
\bel{eps condition} r(s,k)\geq r(s_1,k_1)>12\eps \ \ \text{for $k\geq
k_1$, and $s\geq s_1$.}\ee Without loss of generality, we can assume
\bel{eps estimate}\eps<\frac{\min(\B-1,1-k_1, 1/4)}{6}.\ee

Now, let
$\psi_{1}$ be an infinitely differentiable function defined as
follows: \Beq\label{def:psi1} \psi_1(x)= \begin{cases} 1, & |x_2| <
\epsilon \\
0, & |x_2| > 2 \epsilon
\end{cases}
\Eeq and we extend this function smoothly between $0$ and $1$.

For $s>\so$, let $k_0(s)$ be defined by \bel{def:k0} r(s,k_0(s))=0.\ee
Note that $k_0(s)$ can be explicitly calculated using \eqref{def:r}.
So, if $k>k_0(s)$, $C(s,k)$ is a nontrivial circle.  Finally, note
that if $s\geq s_1$, then $k_1>k_0(s)$; this is true because
$r(s,k_1)\geq r(s_1,k_1)>0$ for such $s$.

To define $\psi_2$ we first prove a lemma about the circles $C(s,k)$.

\begin{lemma}\label{lemma:circles}
Let $s\geq s_1$.

\begin{enumerate}
\item\label{to left}If $k>\beta$ then $C(s,k)$ is to the left of
the vertical line $C(s,\beta)$ which is to the left of $C(s,\ell)$ for
any $\ell\in (k_0(s),\B)$.

\item \label{contain} If $k_0(s)<j<k<\B$ then $C(s,j)$ is
contained inside $C(s,k)$, and these circles do not intersect.

\item \label{foliate} For any
$\delta\in (0,6\eps)$,
\[\sparen{x\st \abs{\frac{\B B(s,x)}{A(s,x)}}<\delta} = \bigcup_{k\in I} C(s,k)\] is an open set containing
 $\Stwox(s)=C(s,1)$.
\end{enumerate}
\end{lemma}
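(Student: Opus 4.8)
The plan is to deduce all three parts from elementary properties of the single smooth, strictly positive function $f_s(x):=\B B(s,x)/A(s,x)$ on $\rtwo$; it is smooth and positive because $A(s,x)\ge h>0$ and $B(s,x)\ge h>0$ for every $x$. Its level sets are exactly the sets $C(s,k)$ of \eqref{def:C(s,k)}, and by \eqref{circle condition} the level set $C(s,\B)$ is the vertical line $L:=\{x_1=(1-\B^2)s/2\}$, while for $k\ne\B$ the set $C(s,k)$ is either empty or the circle of radius $r(s,k)$ (see \eqref{def:r}) centered at $(c(k),0)$, $c(k):=\B^2 s(1+k^2)/(\B^2-k^2)$. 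Differentiating $k\mapsto k^2/(\B^2-k^2)^2$ shows $k\mapsto r(s,k)^2$ is strictly increasing on $(0,\B)$; since $r(s,k_0(s))=0$, this yields three facts used throughout: $C(s,k)$ is a genuine (nonempty) circle for every $k\in(k_0(s),\B)$; $f_s$ takes no value below $k_0(s)$; and $\min_{\rtwo}f_s=k_0(s)$, attained only at $(c(k_0(s)),0)$. The one soft ingredient, used repeatedly, is that distinct level sets of $f_s$ are disjoint, which at once settles every ``do not intersect'' assertion in the lemma.

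For part \eqref{to left} I would evaluate $f_s$ at the two projected platform positions, recalling $\A=-\B^2$. At the receiver point $(s,0)$ one gets $f_s=\B h/\sqrt{(1+\B^2)^2 s^2+h^2}<\B$, and at the transmitter point $(-\B^2 s,0)$ one gets $f_s=\B\sqrt{(1+\B^2)^2 s^2+h^2}/h>\B$; both points lie off $L$, on opposite sides of it, with the transmitter to the left. Since $f_s=\B$ only on $L$ and each open half-plane cut out by $L$ is connected, $\{f_s>\B\}$ is the half-plane to the left of $L$ and $\{f_s<\B\}$ the one to the right. Hence for $k>\B$ the set $C(s,k)=f_s^{-1}(k)$ lies strictly to the left of $L=C(s,\B)$ (vacuously if $C(s,k)=\emptyset$), and for $\ell\in(k_0(s),\B)$ the nonempty circle $C(s,\ell)$ lies strictly to the right of $L$.

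For part \eqref{contain}, fix $k_0(s)<j<k<\B$, so $C(s,j)$ and $C(s,k)$ are genuine circles, and write $\rtwo\setminus C(s,k)=D_k\sqcup U_k$ with $D_k$ the bounded open disk and $U_k$ the unbounded component. The line $L=C(s,\B)$ is disjoint from $C(s,k)$ (different level) and is connected and unbounded, so $L\subset U_k$; since $f_s\equiv\B>k$ on $L$ and $f_s\ne k$ on the connected set $U_k$, we get $f_s>k$ throughout $U_k$, whence $\{f_s\le k\}\subseteq\overline{D_k}$. On the other hand $\min f_s=k_0(s)<k$, so $f_s<k$ somewhere, necessarily in $D_k$, and then $f_s<k$ on all of the connected set $D_k$; thus $\{f_s<k\}=D_k$. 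Since $j<k$ we conclude $C(s,j)=f_s^{-1}(j)\subseteq\{f_s<k\}=D_k$, i.e.\ $C(s,j)$ lies strictly inside the disk bounded by $C(s,k)$ and is disjoint from it.

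For part \eqref{foliate}, the set displayed there is the $\delta$-neighborhood $\{x:|f_s(x)-1|<\delta\}$ of the level $\{f_s=1\}=C(s,1)$, which equals $f_s^{-1}\big((1-\delta,1+\delta)\big)=\bigcup_{k\in(1-\delta,1+\delta)}C(s,k)$; this is open, being the preimage of an open interval under the continuous $f_s$, so $I=(1-\delta,1+\delta)$. It remains to check $(1-\delta,1+\delta)\subset(k_0(s),\B)$, so every $C(s,k)$ in the union is a genuine circle and $1$ lies in the range of $f_s$: from \eqref{eps estimate}, $\delta<6\eps<\B-1$ gives $1+\delta<\B$, and $\delta<6\eps<1-k_1$ gives $1-\delta>k_1$, which exceeds $k_0(s)$ for $s\ge s_1$. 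In particular $C(s,1)$ is a genuine circle, equal to $\Stwox(s)$ by \eqref{def:Stwox}, so the displayed set contains $\Stwox(s)$. I expect part \eqref{contain} to be the only step needing real care, as it hinges on correctly identifying $\{f_s<k\}$ with the \emph{bounded} component of $\rtwo\setminus C(s,k)$; one could instead verify the internal-tangency inequality $|c(j)-c(k)|+r(s,j)<r(s,k)$ directly from the explicit formulas, but the level-set argument is cleaner and sidesteps that computation.
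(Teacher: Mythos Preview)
Your proof is correct and takes a genuinely different route from the paper's. The paper proves part~\eqref{contain} by explicitly computing the $x_1$-axis intercepts $x_\ell(k)$ and $x_r(k)$ of $C(s,k)$ and showing, via a ``somewhat tedious'' derivative calculation, that $x_\ell'(k)<0$ and $x_r'(k)>0$ on $(k_0(s),\B)$; nesting then follows from the symmetry of the circles about the $x_1$-axis. Part~\eqref{foliate} in the paper is deduced from the smooth monotonicity of $x_\ell$ and $x_r$, and part~\eqref{to left} is dismissed as a straightforward calculation.

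You instead organize everything around the smooth positive function $f_s=\B B/A$ and use only connectedness of the complementary components of a level circle, together with the observation that the vertical line $C(s,\B)$ lies in the unbounded component and carries the value $\B>k$. This replaces the derivative computation entirely: once you know $f_s>k$ on $U_k$ and $\min f_s=k_0(s)<k$ forces $f_s<k$ on $D_k$, the containment $C(s,j)\subset D_k$ is immediate. Your argument is cleaner and more conceptual; the paper's buys explicit monotonicity of the endpoint functions $x_\ell,x_r$, which is slightly more quantitative but not needed elsewhere. Your handling of part~\eqref{foliate} via the preimage $f_s^{-1}((1-\delta,1+\delta))$ is also simpler than the paper's foliation argument, and you correctly read the displayed set as $\{|f_s-1|<\delta\}$ (the paper's printed $\abs{\B B/A}<\delta$ is a typo, as it would otherwise not contain $C(s,1)$).
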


\begin{proof}
 Statement \eqref{to left} of the lemma is a straightforward
calculation.

Now, fix $s\geq s_1$.  Let $k\in (k_0(s),\B)$, then the endpoints of
$C(s,k)$ on the $x_1$-axis are
\[\begin{aligned} \xl(k) &= \frac{\B^{2}s
(1+k^{2})}{\B^{2}-k^{2}}-\sqrt{\frac{\B^{2}
s^{2}k^{2}(\B^{2}+1)^{2}}{(\B^{2}-k^{2})^{2}}-h^{2}}\\
\xr(k) &= \frac{\B^{2}s (1+k^{2})}{\B^{2}-k^{2}}+\sqrt{\frac{\B^{2}
s^{2}k^{2}(\B^{2}+1)^{2}}{(\B^{2}-k^{2})^{2}}-h^{2}}\end{aligned}.\]
Clearly the functions $\xl$ and $\xr$ are smooth for $k\in
(k_0(s),\beta)$.  It is straightforward to see that $k\mapsto \xr(k)$
is a strictly increasing smooth function for $k\in (k_0(s),\B)$.

We prove that the function $\xl$ is strictly decreasing by showing
$\xl'$ is always negative.  A somewhat tedious calculation shows that
\[\xl'(k) = \frac{\B^2 s (\B^2+1)k}{(\B^2-k^2)^2}
\bparen{2-\frac{\frac{(\B^2+1)s(\B^2+k^2)}{\B^2-k^2}}{\sqrt{\frac{\B^2s^2k^2(\B^2+1)^2}{(\B^2-k^2)^2}-h^2}}}
\]
By replacing the square root in this expression by the upper bound
$\frac{\B s k(\B^2+1)}{(\B^2-k^2)}$, we see that
\[\xl'(k) \leq \frac{\B^2 s (\B^2+1)k}{(\B^2-k^2)^2\B
k}(-1)(\B-k)^2\] and the right-hand side of this expression is clearly
negative.

The circles $C(s,\cdot)$ are symmetric about the $x_1$-axis, so if $j$
and $k$ are points in $(k_0(s),\B)$ with $j<k$, since
$\xl(k)<\xl(j)<\xr(j)<\xr(k)$, the circle $C(s,j)$ is strictly inside
the circle $C(s,k)$.  This proves \eqref{contain}.

By the choice of $\eps$ in \eqref{eps estimate}, $1-6\eps>k_1$ and
$1+6\eps<\B$.  Because $\xl(k)$ and $\xr(k)$ are smooth strictly
monotonic functions with nonzero derivatives, $(1-6\eps,1+6\eps)\ni
k\mapsto C(s,k)$ is a foliation of an open, connected region
containing $C(s,1)=\Stwox(s)$, and this proves \eqref{foliate}.
\end{proof}

We define
\Beq\label{def:psi2}\psi_2(s,x) =\begin{cases} 1&\abs{\frac{\B B}{A}-1
}<\eps\\
0&\abs{\frac{\B B}{A}-1 }>2\eps\end{cases}\Eeq and we extend smoothly
between (which is possible by Lemma \ref{lemma:circles}, statement
\eqref{foliate}).  By the lemma, $\psi_2(s,\cdot)$ is equal to $1$ on
an open neighborhood of $\Stwox(s)$ and zero away from $\Stwox(s)$.

We assume, without loss of generality, that $\psi_1$ and $\psi_2$ are
symmetric about the $x_1$-axis.

\begin{remark}\label{rem:g(s,t)}  We now can  define the
function $g(s,t)$ in Remark \ref{S1S2cutoff}.  We let
\bel{def:D}D(s,\eps)= \sparen{(x_1,x_2)\st |x_2|<\eps, \abs{\frac{\B
B(s,x)}{A(s,x)}-1}<\eps}.\ee The set $D(s,4\eps)$ is represented by
the shaded set in Figure \ref{fig:CutoffsGood} that is near
$C(s,1)=\Stwox(s)$ and the $x_1$-axis.  Let $g$ be a smooth function
of $(s,t)$ that is zero if the ellipse $E(s,t)$ given in \eqref{def:E}
intersects $D(s,4\eps)$ and is equal to $1$ if $E(s,t)$ does not meet
$D(s,5\eps)$.
\begin{figure}[h!]
\begin{center}\includegraphics[width=4in]{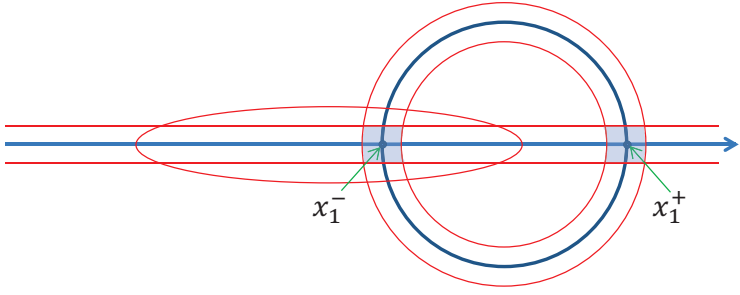}
\end{center}\caption{Picture of ellipse   $E(s,t)$ that does not
meet $D(s,4\eps)$.  As discussed in Remark \ref{S1S2cutoff} and Remark
\ref{rem:g(s,t)}, ellipses are muted by $g$ if they intersect
$D(s,4\eps)$.}\label{fig:CutoffsGood}
\end{figure}
\end{remark}

\subsubsection{Properties of $\Gc^*\Gc$ and end of proof}
 We now write
$\Gc=\Gc_0+\Gc_1+\Gc_2+\Gc_3$ where $\Gc_i$ are given in terms of
their kernels
\begin{align*}
&K_{\Gc_0}=\int e^{-i\varphi} a \psi_1 \psi_2 d \omega,\quad
K_{\Gc_1}=\int e^{-i\varphi} a \psi_1 (1-\psi_2) d \omega, \\
&K_{\Gc_2}=\int e^{-i\varphi} a(1-\psi_1) \psi_2 d \omega,\quad
K_{\Gc_3}=\int e^{-i\varphi} a (1-\psi_1) (1-\psi_2) d\omega,
\end{align*} where $\varphi$ is the phase function of $\Gc$.
The supports of the $\Gc_i$ are given in Figure
\ref{fig:supports-orig}.
\begin{figure}[h!]
\begin{center}\includegraphics[width=4in]{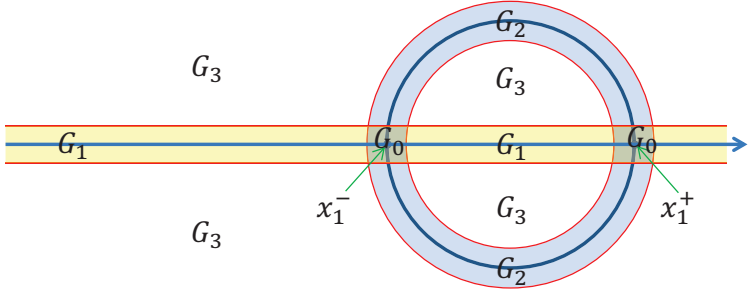}
 \end{center}\caption{Picture indicating the rough locations of
the support of $\Gc_0$, $\Gc_1$, $\Gc_2$, $\Gc_3$. Note that the
circles are not exactly concentric.}\label{fig:supports-orig}
\end{figure}

Now we consider $\Gc^*\Gc$, which using the decomposition of $\Gc$ as
above can be written as $\Gc^*\Gc =\Gc^*_0
\Gc+(\Gc_{1}+\Gc_{2})^{*}\Gc_{0}+ \Gc^*_1\Gc_1+
\Gc^*_2\Gc_2+\Gc^*_1\Gc_2+\Gc^*_2\Gc_1+\Gc^*_1\Gc_3+\Gc^*_2\Gc_3
+\Gc^*_3\Gc$\\

The theorem now follows from Lemmas \ref{ImagingSection:Lemma
G0}-\ref{ImagingSection:Lemma G1}, and Theorem
\ref{ImagingSection:I(p,l) regularity}, which we now state and prove.
In the lemmas, we analyze  the compositions above.

Recall that $\Gc_{1}$ and $\Gc_{2}$ are operators defined as follows:
 \[
 \Gc_{1}V(s,t)=\int e^{-\I \vp(s,t,x,\omega)} \psi_{1}(x)
 (1-\psi_{2}(s,x)) a(s,t,x,\omega) V(x) \D x \D \omega
 \]
 and
 \[
 \Gc_{2}V(s,t)=\int e^{-\I \vp(s,t,x,\omega)} (1-\psi_{1}(x))
 \psi_{2}(s,x) a(s,t,x,\omega) V(x) \D x \D \omega
 \]

\begin{lemma}\label{ImagingSection:Lemma G0}
The operators $\Gc_{1}^{*}\Gc_{2}$ and $\Gc_{2}^{*}\Gc_{1}$ are smoothing.
\end{lemma}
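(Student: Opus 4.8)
The plan is to show that the canonical relations of $\Gc_1$ and $\Gc_2$ have disjoint images in $T^*Y\smo$, so that the compositions $\Gc_1^*\Gc_2$ and $\Gc_2^*\Gc_1$ have empty wavefront set and are therefore smoothing. First I would recall that, by Theorem \ref{thm:G alpha<0}, both $\Gc_1$ and $\Gc_2$ are FIOs whose canonical relations are contained in $\CG$, and that the left projection $\pi_L$ from $\CG$ drops rank exactly on $\Sigma = \Sigma_1\cup\Sigma_2$, while away from $\Sigma$ it is a local diffeomorphism. By construction (see \eqref{def:psi1}, \eqref{def:psi2} and Figure \ref{fig:supports-orig}), $\Gc_1$ is microlocally supported where $\psi_1\neq 0$ and $1-\psi_2\neq 0$, i.e.\ near the $x_1$-axis ($\Sigma_1$) but away from $\Stwox(s)$ ($\Sigma_2$); while $\Gc_2$ is microlocally supported where $1-\psi_1\neq 0$ and $\psi_2\neq 0$, i.e.\ near $\Stwox(s)$ but away from the $x_1$-axis. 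In particular the base projections to $X$ of the two canonical relations are disjoint (the supports of $\psi_1(1-\psi_2)$ and $(1-\psi_1)\psi_2$ in $x$ are disjoint by Lemma \ref{lemma:circles}, since $\Stwox(s)$ stays a bounded distance from the $x_1$-axis for $s\geq s_1$, thanks to \eqref{eps condition} and the mute $g$ excluding the points $(x_1^\pm,0)$).

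Next I would compute the composition. For $\Gc_1^*\Gc_2$ one has $WF'(\Gc_1^*\Gc_2)\subseteq (\CG_1)^t\circ \CG_2$, where $\CG_i$ is the canonical relation of $\Gc_i$. A point in this composition requires a common point $(s,t,\sigma,\tau)\in T^*Y\smo$ lying in both $\pi_L(\CG_1)$ and $\pi_L(\CG_2)$; tracing back through the parameterization $(s,x_1,x_2,\omega)$ of $\CG$, such a point would have to come from a pair $(s,x,\omega)$ in the microlocal support of $\psi_1(1-\psi_2)$ and a pair $(s,y,\omega')$ in the microlocal support of $(1-\psi_1)\psi_2$, subject to the matching conditions \eqref{Isorange curves}–\eqref{Isodoppler curves} that $x$ and $y$ lie on the same ellipse $E(s,t)$ with the same Doppler value. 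But the analysis in Proposition \ref{Wavefront of composition I}/\ref{Wavefront of composition II} (via the prolate-spheroidal coordinates \eqref{Prolate coordinate system} and \eqref{composition-in-coords}) shows the only solutions are $x=y$ (up to reflection across the $x_1$-axis) — giving $\Delta$ — or $(x,y)$ satisfying the $C_2$-relation \eqref{C2}. In the first case, $x$ near the $x_1$-axis forces $y$ near the $x_1$-axis, contradicting $y\in\supp(\psi_2)$ and $y\notin\supp(\psi_1)$; in the $C_2$ case, one checks that \eqref{C2} together with $x$ in $\supp(\psi_1(1-\psi_2))$ and $y\in\supp((1-\psi_1)\psi_2)$ is incompatible — indeed $C_2$ requires $\frac{\alpha}{A\At}+\frac{1}{B\Bt}=0$, which by \eqref{circle condition} and the monotonicity in Lemma \ref{lemma:circles} forces one of $x,y$ to lie on $\Stwox(s)$ and the other on a circle $C(s,k)$ with $k$ near $1$, both of which are \emph{inside} $\supp(\psi_2)$ and away from $\supp(\psi_1)$, contradicting the fact that $x$ must be near the $x_1$-axis. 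Hence there are no points in the composition, so $WF'(\Gc_1^*\Gc_2)=\emptyset$ and $\Gc_1^*\Gc_2$ is a smoothing operator. The argument for $\Gc_2^*\Gc_1$ is identical by symmetry (or by taking adjoints).

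The main obstacle is the bookkeeping in the $C_2$ case: one must verify carefully, using the explicit circle description \eqref{circle condition} and the $\eps$-estimates \eqref{eps condition}–\eqref{eps estimate}, that the $k$ and $1/k$ circles associated to a would-be solution of \eqref{important} both lie in the region where $\psi_2=1$ (hence $\psi_1=0$ on the relevant part), so that the product of cutoffs $\psi_1(1-\psi_2)$ evaluated at $x$ and $(1-\psi_1)\psi_2$ evaluated at $y$ cannot both be nonzero. Once the geometry of the supports is pinned down (which is exactly what Lemma \ref{lemma:circles} and Remark \ref{rem:g(s,t)} are designed for), the disjointness is immediate and the smoothing conclusion follows from the standard calculus of wavefront sets of FIOs.
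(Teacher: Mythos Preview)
Your argument has a genuine gap: the claim that the $x$-supports of $\psi_1(1-\psi_2)$ and $(1-\psi_1)\psi_2$ are disjoint is false. From \eqref{def:psi1}--\eqref{def:psi2}, the support of $\psi_1(1-\psi_2)(s,\cdot)$ is $\{|x_2|\le 2\eps\}\cap\{|\beta B/A-1|\ge\eps\}$, while that of $(1-\psi_1)\psi_2(s,\cdot)$ is $\{|x_2|\ge\eps\}\cap\{|\beta B/A-1|\le 2\eps\}$. These overlap in the transition region $\eps\le|x_2|\le 2\eps$, $\eps\le|\beta B/A-1|\le 2\eps$. Consequently your treatment of the $\Delta$ and $C_1$ cases does not yield a contradiction: if $x=y$ (or $y=(x_1,-x_2)$) lands in that overlap, both cutoffs can be nonzero simultaneously. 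Likewise your $C_2$ argument is incorrect: the relation \eqref{important} says $x\in C(s,k)$ and $y\in C(s,1/k)$ for some $k>0$; neither point is forced onto $\Stwox(s)=C(s,1)$, so your conclusion that ``one of $x,y$ lies on $\Stwox(s)$'' does not follow.

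The missing ingredient is the data-side mute $g(s,t)$ from Remark~\ref{rem:g(s,t)}, which you mention only parenthetically but which does the real work. The paper's proof shows that in each of the three cases ($\Delta$, $C_1$, $C_2$) the point $x$ necessarily lies in $D(s,4\eps)$: for $\Delta$ and $C_1$ because $x\in\supp(\psi_1)\cap\supp(\psi_2(s,\cdot))\subset D(s,4\eps)$; for $C_2$ because $y\in\supp(\psi_2(s,\cdot))$ forces $|1-1/k|<2\eps$, hence $|1-k|<4\eps$, and together with $|x_2|<2\eps$ this puts $x\in D(s,4\eps)$. Since $x\in E(s,t)$, the ellipse meets $D(s,4\eps)$, so $g(s,t)=0$ and the amplitude vanishes. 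You should restructure your proof around this mechanism rather than around a spatial disjointness that does not hold.
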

\begin{proof}
 We show that $\Gc_{1}^{*}\Gc_{2}$ is smoothing. The proof for the case of $\Gc_{2}^{*}\Gc_{1}$ is similar.

We have
 \[
 \Gc_{1}^{*} V(x)=\int e^{\I \vp(s,t,x,\omega)} \psi_{1}(x)(1-\psi_{2}(s,x))\overline{a(s,t,x,\omega)} V(s,t) \D s \D t \D \omega.
 \]
where  $\psi_1(x)$ and $\psi_2$ are defined in \eqref{def:psi1} and \eqref{def:psi2} respectively.
 The Schwartz kernel of $\Gc_{1}^{*} \Gc_{2}$ is
 \[ K(x,y)=\int e^{i\omega(
|y-\gamma_{T}(s)|+|y-\gamma_{R}(s)|-|x-\gamma_{T}(s)|-|x-\gamma_{R}(s)|)}\tilde{a}(x,y,s,\omega)\:
d s d \omega, \]
where $\wt{a}(x,y,s,\omega)$ has the following products of cutoff functions as an additional factor:
\[
g(s,t)\psi_{1}(x)(1-\psi_{2}(s,x))(1-\psi_{1}(y))\psi_{2}(y,s).
\]
Here $t$ is determined from $s$ and $x$ as the value for which
$x\in E(s,t)$. For this reason, in trying to understand the
propagation of singularities, we need only to restrict ourselves, for
each fixed $s\geq s_1$, to those base points $x$ and $y$ for which
\bel{supp}x \in \supp(\psi_{1}(\cdot)(1-\psi_{2}(s,\cdot)),\ y \in
\supp((1-\psi_{1}(\cdot)))\psi_{2}(s,\cdot)).\ee

We use this setup to show $\Gc_1^* \Gc_2$ is smoothing by showing its
symbol is zero for covectors in $ \CG^t\circ \CG$ (note that our
argument shows that the symbol of the operator is zero in a
neighborhood in $\paren{T^*(X)\smo}^2$ of $\CG^t\circ \CG$).  Let
$(x,\xi,y,\xi')\in \CG^t\circ \CG$.  Then, there is an
$(s_2,t_2,\eta)\in T^*(Y)\smo$ such that $(x,\xi,s_2,t_2,\eta)\in
\CG^t$ and $(s_2,t_2,\eta,y,\xi')\in \CG$.  For the rest of the proof,
we fix this $s_2$.  (If there are other values of $s$ associated to
the composition, we repeat this proof for those values of $s$.)

Because $\CG^t\circ \CG\subset \Delta\cup C_1\cup C_2$, we consider
three cases separately.

\begin{enumerate}[I.]
\item\label{item:Delta-case} \textbf{Covectors
$(x,\xi,y,\xi')\in\Delta\cap \paren{\CG^t\circ \CG}$:} In this case,
$x=y$ and $x$ is in $\supp(\psi_1)\cap \supp(\psi_2)\subset
D(s_2,4\eps)$.  By the choice of the function $g(s,t)$ in Remark
\ref{rem:g(s,t)}, the symbol of $\Gotc$ is zero above such $x$.

\item\label{item:C1-case} \textbf{Covectors $(x,\xi,y,\xi')\in
C_1\cap \paren{\CG^t\circ \CG}$:} In this case, $(x_1,x_2)=(y_1,-y_2)$
and the argument in case \ref{item:Delta-case} shows that the symbol
of $\Gotc$ is zero for such $x$ and $y$

\item\label{item:C2-case}\textbf{Covectors $ (x,\xi,y,\xi')\in
C_2\cap\paren{\CG^t\circ \CG}$:} If $(x,\xi,y,\xi')\in C_2\cap
\paren{\CG^t\circ \CG}$, then for some $(s_2,t_2)$ above, there is a
$k_2>k_0(s_2)$, such that
\begin{align}
x\in&E(s_2,t_2)\cap \supp(\psi_1)\cap\supp(1-\psi_2(s,\cdot))\cap
C(s_2,k_2)\label{x restriction}\\
y\in&E(s_2,t_2)\cap \supp(1-\psi_1)\cap\supp(\psi_2(s,\cdot))\cap
C\paren{s_2,1/k_2}.\label{y restriction}\end{align}

Using \eqref{x restriction}, the fact that $k_2 =\B
B(s_2,x)/A(s_2,x)$, we see that $
\abs{x_2}<2\eps$ and $\abs{1-k_2}>\eps$.  Now, using the restriction
on $1/k_2$ in \eqref{y restriction} and the fact that $\eps<1/4$, we
see $\abs{1-k_2}<4\eps$.  Putting this together shows that
\[ 1-4\eps<k=\frac{\B B(s_2,x)}{A(s_2,x)}<1+4\eps.\]
Since $\abs{x_2}<2\eps$, this shows that $x\in D(s_2,4\eps)$.
Therefore $E(s_2,t_2)\cap D(s_2,4\eps)\neq \emptyset$ and
$g(s_2,t_2)=0$ by Remark \ref{rem:g(s,t)}.  Therefore, the symbol of
$\Gotc$ is zero near $(x,\xi,y,\xi')$ so $\Gotc$ is smoothing near
$(x,\xi,y,\xi')$.
\end{enumerate}

This finishes the proof that $\Gotc$ is smoothing.
\end{proof}

\begin{lemma}
The operator $\Gc_{0}$ is smoothing.
\end{lemma}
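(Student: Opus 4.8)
The plan is to show directly that $\Gc_0$ has empty canonical relation, i.e.\ that its Schwartz kernel is $C^\infty$. The point is that the amplitude of $\Gc_0$ carries three cutoffs: $\psi_1(x)$, supported where $|x_2|\le 2\eps$; $\psi_2(s,x)$, supported where $|\frac{\B B(s,x)}{A(s,x)}-1|\le 2\eps$; and, hidden inside $a$, the mute $g(s,t)$ of Remark \ref{rem:g(s,t)}, which vanishes as soon as the ellipse $E(s,t)$ meets the open set $D(s,4\eps)$ of \eqref{def:D}. Since $2\eps<4\eps$, the first two cutoffs force $\supp(\psi_1(\cdot)\psi_2(s,\cdot))\subset D(s,4\eps)$.

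First I would note that $\Gc_0$ is an FIO with the same non-degenerate phase $\varphi=-\omega(t-A-B)$ as $\Gc$ and amplitude $a\,\psi_1\,\psi_2\in S^2$; away from the isorange locus $\{(s,t,x):t=A(s,x)+B(s,x)\}$, where $\partial_\omega\varphi=-(t-A-B)\ne0$, a single integration by parts in $\omega$ shows the kernel is smooth, so it suffices to prove that $a\,\psi_1\,\psi_2$ vanishes on a neighborhood of that locus. If $\psi_1(x)\psi_2(s,x)\ne0$ then $x\in D(s,4\eps)$, and if in addition $t=A(s,x)+B(s,x)$ then $x\in E(s,t)$ by \eqref{def:E}, so $E(s,t)\cap D(s,4\eps)\ne\emptyset$ and hence $g(s,t)=0$, killing $a$. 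Because $D(s,4\eps)$ is open and, on $\supp(\psi_2(s,\cdot))$, the gradient $\nabla_x(A+B)$ is nonzero (such $x$ lie near $\Stwox(s)$, which by \eqref{eps estimate} avoids the midpoint $(\frac{(\A+1)s}{2},0)$, the only point where the bistatic distance is critical), the ellipse $E(s,t)$ depends smoothly on $(s,t)$ near $x$, and the condition $E(s,t)\cap D(s,4\eps)\ne\emptyset$ — hence $g(s,t)=0$ — persists on a full neighborhood. Combined with the case $\psi_1(x)\psi_2(s,x)=0$ (where the amplitude already vanishes on a neighborhood, the supports being closed), this shows $a\,\psi_1\,\psi_2\equiv0$ near the isorange locus, so the kernel of $\Gc_0$ is $C^\infty$ and $\Gc_0$ is smoothing.

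The only real subtlety is the passage from ``$g$ vanishes on the isorange locus'' to ``$g$ vanishes on a neighborhood of it'', since otherwise $a$ might merely vanish to finite order and $\Gc_0$ need not be smoothing; this is handled exactly as in Cases \ref{item:Delta-case} and \ref{item:C2-case} of the proof of Lemma \ref{ImagingSection:Lemma G0}, using the strict inequalities built into $\eps$ via \eqref{eps estimate} together with the smooth dependence of $E(s,t)$ on $(s,t)$. Everything else is routine bookkeeping of supports.
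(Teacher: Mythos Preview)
Your proof is correct and follows essentially the same approach as the paper: both observe that $\supp(\psi_1(\cdot)\psi_2(s,\cdot))\subset D(s,4\eps)$, so on the critical set $\{t=A+B\}$ the mute $g(s,t)$ in $a$ vanishes, forcing the amplitude of $\Gc_0$ to be zero there. You spell out more carefully than the paper why this vanishing holds on a full neighborhood of the critical set (using openness of $D(s,4\eps)$ and smooth dependence of $E(s,t)$), which is a useful clarification the paper leaves implicit.
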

\bpr
Recall that the Schwartz kernel of $\Gc_0$ is
\[K_{\Gc_0}= \int e^{-i\vp} a\psi_1(x)\psi_2(s,x)d\omega. \]
For each fixed $s\geq s_1$, the support of
$\psi_1(\cdot)\psi_2(s,\cdot)$ is inside $D(s,4\eps)$ and by the
choice of the function $g(s,t)$ in Remark \ref{rem:g(s,t)}, the symbol
of $\Gc_0$ is zero above such $(s,x)$.  \epr

 \begin{lemma}\label{ImagingSection:Lemma G1} The operators
$\Gc^*_1\Gc_3$, $\Gc^*_2 \Gc_3$ and $\Gc^*_3\Gc$ can be decomposed as
a sum of operators belonging to the space $I^{3}(\Delta\setminus
(C_1\cup C_2))+I^{3}(C_{1}\setminus (\Delta\cup C_2))+I^{3}(C_2
\setminus (\Delta\cup C_1))$.
\end{lemma}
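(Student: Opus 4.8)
The plan is to reduce the lemma to two facts: that $\Gc_{3}$ (and $\Gc_{0}$) is an honest FIO associated to a local canonical graph, and that composing with such an operator cannot create covectors in any of the pairwise intersections $\Delta\cap C_{1}$, $\Delta\cap C_{2}$, $C_{1}\cap C_{2}$. For the first fact: by \eqref{def:psi1}--\eqref{def:psi2} the amplitude of $\Gc_{3}$ carries the factor $(1-\psi_{1}(x))(1-\psi_{2}(s,x))$, supported where $|x_{2}|>\eps$ and $|\B B/A-1|>\eps$, i.e.\ away from the $x_{1}$-axis $=\pi_{X}(\Sigma_{1})$ and away from $\Stwox(s)=\pi_{X}(\Sigma_{2})$. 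By Theorem \ref{thm:G alpha<0}, $\pi_{L}$ and $\pi_{R}$ drop rank only on $\Sigma_{1}\cup\Sigma_{2}$, so over the support of the amplitude of $\Gc_{3}$ both projections are local diffeomorphisms and $\Gc_{3}$ is a standard FIO of order $\tfrac32$ associated to a canonical graph; the same holds for $\Gc_{0}$, so $\Gc_{3}^{*}\Gc_{0}$ is smoothing. Writing $\Gc=\Gc_{0}+\Gc_{1}+\Gc_{2}+\Gc_{3}$ and using $\Gc_{3}^{*}\Gc_{1}=(\Gc_{1}^{*}\Gc_{3})^{*}$, $\Gc_{3}^{*}\Gc_{2}=(\Gc_{2}^{*}\Gc_{3})^{*}$, together with $\Delta^{t}=\Delta$, $C_{1}^{t}=C_{1}$, $C_{2}^{t}=C_{2}$ (so the target class is closed under adjoints), it suffices to treat $\Gc_{1}^{*}\Gc_{3}$, $\Gc_{2}^{*}\Gc_{3}$ and $\Gc_{3}^{*}\Gc_{3}$.

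For each of these I would pass to the Schwartz kernel. After the stationary phase in $(t,\tilde\omega)$ exactly as in \eqref{Kernel of composed operator}--\eqref{Phase of kernel}, the kernel is an oscillatory integral with the phase $\Phi$ and an amplitude in $S^{4}$ that additionally carries the cutoff of the first factor evaluated at $x$, the cutoff of $\Gc_{3}$ evaluated at $y$, and the mute $g(s,t)$. Since one factor is associated to a canonical graph, the composition is transversal, so each of $\Gc_{1}^{*}\Gc_{3}$, $\Gc_{2}^{*}\Gc_{3}$, $\Gc_{3}^{*}\Gc_{3}$ is an FIO of order $\tfrac32+\tfrac32=3$ whose canonical relation is contained in $\CG^{t}\circ\CG\subset\Delta\cup C_{1}\cup C_{2}$ by Proposition \ref{Wavefront  of composition II}.

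The heart of the argument is to show that the canonical relation of each composition meets none of $\Delta\cap C_{1}$, $\Delta\cap C_{2}$, $C_{1}\cap C_{2}$; since these three Lagrangians are pairwise disjoint away from those sets, and $\Delta\cap C_{1}\cap C_{2}=\emptyset$, a microlocal partition of unity then splits the kernel into three pieces in $I^{3}(\Delta\setminus(C_{1}\cup C_{2}))$, $I^{3}(C_{1}\setminus(\Delta\cup C_{2}))$ and $I^{3}(C_{2}\setminus(\Delta\cup C_{1}))$. I would argue case by case using from Proposition \ref{Wavefront  of composition II} that $\Delta\cap C_{1}\subset\{x_{2}=0\}$, that on $\Delta\cap C_{2}$ one has $x=y$ and $\A/A^{2}+1/B^{2}=0$, and that on $C_{1}\cap C_{2}$ one has $x_{1}=y_{1}$, $x_{2}=-y_{2}$ and $\A/A^{2}+1/B^{2}=0$. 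On the $\Delta$- and $C_{1}$-parts of any of the three compositions the base points satisfy $|x_{2}|=|y_{2}|$, and since $y\in\supp(1-\psi_{1})$ this gives $|x_{2}|>\eps$, excluding $\Delta\cap C_{1}$ and the $x_{2}=0$ locus of $C_{1}\cap C_{2}$; moreover on $C_{1}$ one has $A(s,x)=A(s,y)$, $B(s,x)=B(s,y)$, so $k:=\B B/A$ agrees at $x$ and $y$, and whenever the $\psi_{2}$-cutoffs of the two factors are simultaneously nonzero the estimates \eqref{eps estimate} force $\eps<|k-1|<2\eps$, hence $k\neq1$ and the remaining condition $\A/A^{2}+1/B^{2}=0$ fails. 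On the $C_{2}$-part, \eqref{important} gives $\B B(s,x)/A(s,x)=k$ and $\B B(s,y)/A(s,y)=1/k$; the cutoffs together with \eqref{eps estimate} again pin $k$ near $1$ but not equal to $1$, whereas $\Delta\cap C_{2}$ and $C_{1}\cap C_{2}$, seen inside $C_{2}$, force $x_{1}=y_{1}$, hence $k=1/k$, hence $k=1$, a contradiction. A useful bookkeeping remark is that the blowdown of $\CG$ along $\Sigma_{1}$ sits exactly over $x_{2}=0$, which the above shows is absent from the $\Delta$- and $C_{1}$-parts of the composition; hence on those parts the composed canonical relation really is a piece of a canonical graph, consistent with the $I^{3}(\Delta\setminus\cdots)$ and $I^{3}(C_{1}\setminus\cdots)$ claims. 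The case $\Gc_{3}^{*}\Gc_{3}$ is the simplest since both $x$ and $y$ then lie in $\supp(1-\psi_{1})\cap\supp(1-\psi_{2})$, which immediately gives $|x_{2}|,|y_{2}|>\eps$ and $|k-1|>\eps$.

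I expect the last paragraph to contain the only real work: chasing the supports of $\psi_{1},\psi_{2},g$ through all three geometric cases and all three compositions, and in particular making the quantitative choice of $\eps$ in \eqref{eps estimate} separate $k$ from $1$ on the $C_{2}$-part. Once this is in place, the order count ($\tfrac32+\tfrac32=3$) and the decomposition into $I^{3}$ pieces via a microlocal partition of unity are routine and parallel the end of the proof of Theorem \ref{thm:F*F alpha>=0}.
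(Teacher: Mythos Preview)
Your approach is correct in spirit and genuinely different from the paper's. The paper never invokes the fact that $\Gc_{3}$ is associated to a local canonical graph to get transversal composition in one stroke; instead it further partitions $\Gc_{1}$, $\Gc_{2}$, $\Gc_{3}$ into explicit geometric pieces ($\Gc_{1}^{j}$, $\Gc_{1}^{j\pm}$, $\Gc_{1}^{jo}$, $\Gc_{2}^{j}$, $\Gc_{3}^{k}$, based on position relative to $\Stwox(s)$ and the $x_{1}$-axis) and checks every pairwise composition by hand, using three simple geometric rules to decide which of $\Delta$, $C_{1}$, $C_{2}$ can appear. Your route is cleaner: you use that $\pi_{L}|_{\CG}$ restricted to the support of $\Gc_{3}$ is a local diffeomorphism, so any composition with $\Gc_{3}$ or $\Gc_{3}^{*}$ is automatically transversal and produces an honest FIO of order $3$; then you only need to show the composed canonical relation misses $\Delta\cap C_{1}$, $\Delta\cap C_{2}$, $C_{1}\cap C_{2}$, after which a microlocal partition of unity splits the kernel. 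This avoids the lengthy case analysis and the auxiliary cutoffs $\sgp$, $\sgo$, $\sgm$. What the paper's approach buys is that each piece visibly lands in a \emph{single} Lagrangian, so no appeal to an abstract splitting argument is needed.

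One slip to fix: you write ``the same holds for $\Gc_{0}$, so $\Gc_{3}^{*}\Gc_{0}$ is smoothing.'' This is wrong on two counts. First, $\Gc_{0}$ is supported where $\psi_{1}\psi_{2}\neq 0$, i.e.\ near both $\Sigma_{1}$ and $\Sigma_{2}$, so it is certainly \emph{not} associated to a local canonical graph. Second, even if both factors were canonical-graph FIOs, their composition would be an order-$3$ FIO, not smoothing. The correct reason $\Gc_{3}^{*}\Gc_{0}$ is smoothing is that $\Gc_{0}$ itself is smoothing: its amplitude is supported in $D(s,4\eps)$, where the mute $g(s,t)$ from Remark~\ref{rem:g(s,t)} vanishes. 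The paper proves this as a separate lemma just before the one you are proving. Also, your phrase ``the $x_{2}=0$ locus of $C_{1}\cap C_{2}$'' is a red herring: on $C_{1}\cap C_{2}$ one has $A=\tilde A$, $B=\tilde B$ and hence $k=1$, with no constraint $x_{2}=0$; your $k\neq 1$ argument already handles this case completely.
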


\begin{proof} Each of these compositions is covered by the transverse
intersection calculus.

We decompose $\Gc_1$, $\Gc_2$, and $\Gc_3$ into a sum of operators on
which the compositions will be easier to analyze.  This is
represented in Figure \ref{fig:supports}.
\begin{figure}[h!]
\begin{center}\includegraphics[width=4in]{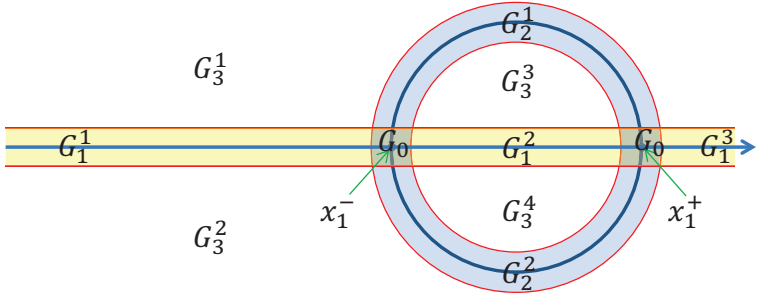}
 \end{center}\caption{Picture indicating the rough locations of the
support of $\Gc_0$, and the partitioned functions $\Gc_1^i$,
$\Gc_2^j$, and $\Gc_3^k$. Note that the circles are not exactly
concentric.}\label{fig:supports}
\end{figure}

For $\Gc_1$, note that $\Stwox(s)$ divides $\sparen{\abs{x_2}<2\eps}$
in three regions since $r(s,1)>12\eps$ by \eqref{eps condition}.  Let
$H_1(s)$ be the part of $\sparen{\abs{x_2}<2\eps}\setminus \Stwox(s)$
to the left of $\Stwox(s)$ and let $H_2(s)$ be the part inside
$\Stwox(s)$ and $H_3(s)$ the part to the right of $\Stwox(s)$.  Define
our partitioned operators as follows
$\Gc_{1}=\Gc_{1}^{1}+\Gc_{1}^{2}+\Gc_{1}^{3}$ where
{\small{\begin{align*} \Gc_{1}^{i}V(s,t)=\int e^{-i
\varphi(s,t,x,\omega)}\psi_{1}(x)(1-\psi_{2}(s,x))\chi_{H_i(s)}(x)a(s,t,x,\omega)V(x)d
x d \omega
\end{align*}}}
for $i=1,2,3$.  Note that the symbols are all smooth because
\[\chi_{H_j(s)}(x)\psi_1(x)\paren{1-\psi_2(s,x)}\] is a smooth cutoff
function in $(s,x)$ since the support of $\psi_1$ is inside
$\sparen{\abs{x_2}<2\eps}$ and the support of $(1-\psi_2(s,\cdot))$
does not meet $\Stwo(s)$.

We decompose $\Gc_2$ into two operators in a similar way.  Let $I_1$
be the open upper half plane and let $I_2$ be the open lower half
plane.  Define \bel{def:G2i} \Gc_{2}^{i}V(s,t)=\int e^{-i
\varphi(s,t,x,\omega)}(1-\psi_{1}(x))\psi_{2}(s,x)\chi_{I_j(s)}(x)a(s,t,x,\omega)V(x)d
x d \omega
\ee for $j=1,2$
Because  the functions
$(1-\psi_1(\cdot))\psi_2(s,\cdot)$ are supported away from the $x_1$
axis, these symbols are smooth.

We decompose $\Gc_3$ into four operators in a similar way using Figure
\ref{fig:supports}: $\Stwox(s)$ divides $\sparen{x_2\neq 0}$ into four
regions $J_1(s)$, the unbounded region above the $x_1$-axis,
$J_2(s)$, it's mirror image in the $x_1$-axis, $J_3(s),$ the bounded
region inside $\Stwox(s)$ and above the $x_1$-axis, and its mirror
image, $J_4(s)$.  We define
{\small{\begin{align*} \Gc_{3}^{k}V(s,t)=\int e^{-i
\varphi(s,t,x,\omega)}(1-\psi_{1}(x))(1-\psi_{2}(s,x))\chi_{J_k(s)}(x)a(s,t,x,\omega)V(x)d
x d \omega
\end{align*}}} for $k=1,2,3,4$, and because of the cutoffs used, these
are all FIO with smooth symbols.

To find the canonical relation of ${\Gc_1^j}^*\Gc_3^k$, we consider
$(x,\xi,y,\xi')\in \CG^t\circ \CG$ and let $(s,t)\in Y$ such that
$(x,\xi,s,t,\eta)\in \CG^t$ and $(s,t,\eta,y,\xi')\in \CG$.  In any
case, $(\Gc_1^i)^*\Gc_3^j$ has canonical relation a subset of
$\CG^t\circ \CG \subset \Delta\cup C_1\cup C_2$. To find which subset,
we consider the restriction that the supports of the $\Gc_i^j$ put on
$x$ and $y$.  We use the fact that $x$ and $y$ are on $E(s,t)$ plus
the following rules to understand the canonical relations of these
operators:
\begin{enumerate}[$\left(i\right)$]

\item\label{Delta canonical} If the supports exclude $x$ and $y$ from being
  equal, then the canonical relation ($WF'$) of the composed operator does
  not include $\Delta$.

\item\label{C1 canonical} if the supports exclude $x$ and $y$ from being reflections in
the $x_1$ axis then the canonical relation of the composed
operator does not include $C_1$.

\item\label{C2 canonical} If the supports exclude $x$ from being
outside $\Stwox(s)$ and $y$ being inside or vice versa, then the
canonical relation of the composed operator does not include $C_2$.

\end{enumerate}


%
%
%
%
%
%

We first consider $(\Go)^*\Gth$.  To do this, we partition $\Go$
further.  Let $u$ be a smooth cutoff function supported in
$[-\eps,\eps]$ and equal to one on $[-\eps/2,\eps/2]$ and let $\sgp =
\chi_{[0,2\eps]}(1-u)\psi_1(1-\psi_2)$, $\sigma^o=
\chi_{[-\eps,\eps]}u \psi_1(1-\psi_2)$, and $\sgm =
\chi_{[-2\eps,0]}(1-u)\psi_1(1-\psi_2)$ where the characteristic
functions and $u$ are functions of $x_2$.  Note that, for each fixed
$s$ and functions of $x$, $\supp(\sgp)\subset\rr\times
[\eps/2,2\eps]$, $\supp(\sgo)\subset\rr\times [-\eps,\eps]$,
$\supp(\sgm)\subset\rr\times [-2\eps,-\eps/2]$.  All these functions
are smooth and $\psi_1(1-\psi_2)=\sgp+\sgo+\sgm$.  This allows us to
divide up each $\Go^j$ ($j=1,2,3)$ into the sum of three operators
where $\Go^{j+}(V)$ has symbol equal to the symbol of $\Gc$ but
multiplied by $\sgp H_j $, $\Go^{jo}(V)$ has symbol equal to the
symbol of $\Gc$ but multiplied by $\sgo H_j $, and $\Go^{j-}(V)$ has
symbol equal to the symbol of $\Gc$ but multiplied by $\sgm H_j $.
Note that $\Go^j = \Go^{j+}+\Go^{jo}+\Go^{j-}$.

 We now analyze the composition $\Goo^*\Gtho$ using this partition of
$\Goo$.  Consider the composition $(\Goop)^*\Gtho$.  Because both
operators are supported in $x$ above the $x_1$ axis, the canonical
relation of this composition cannot intersect $C_1$ (see ({\it\ref{C1
canonical}})).  Because they are both supported outside $\Stwox(s)$, it
cannot intersect $C_2$ (since $C_2$ associates points inside
$\Stwox(s)$ only with points outside $\Stwox(s)$ and vice versa by
({\it \ref{C2 canonical}})).  So this shows $(\Goop)^*\Gtho\in
I(\Delta\setminus C_1)$.

Note that we use the transverse intersection calculus to show
$(\Goop)^* \Gtho$ and each of the other operators in this lemma are
regular FIO.

Now, we consider $(\Gooo)^*\Gtho$.  Note that $\Gooo$ is supported in
$x$ in $\abs{x_2}<\eps$ and $\Gtho$ is supported in $x_2>\eps$.
Therefore, the canonical relation of the composition can include
neither $\Delta$ nor $C_1$ by ({\it \ref{Delta canonical}}), ({\it \ref{C1
canonical}}).  Furthermore, because they are both supported outside
$\Stwox(s)$, it does not contain $C_2$ by ({\it \ref{C2 canonical}}).
Therefore, $(\Gooo)^*\Gtho$ is smoothing.

Next, we consider $(\Goom)^*\Gtho$.  The argument is similar to the
case $(\Goop)^*\Gtho$, but this canonical relation is contained in
$C_1\setminus \Delta$.

This shows that $(\Goo)^*\Gtho$ is a sum of operators in
$I^3(\Delta\setminus (C_1\cup C_2))+I^3(C_1\setminus (\Delta\cup
C_2))$.

The proof that $(\Gc_{1}^{1})^{*} \Gc_{3}^{2} \in I^3(\Delta\setminus
(C_1\cup C_2))+I^3(C_1\setminus (\Delta\cup C_2))$ follows using the same arguments but
the roles of $\Goom$ and $\Goop$ are switched because $\Gtht$ has
support in $x$ below the $x_1$-axis and below $\Stwox(s)$.

Now we consider $(\Goo)^*\Gthth$.  Because the support in $x$ of
$\Goo$ is to the left of $\Stwox(s)$ and the support of $\Gthth$ is
inside, the canonical relation of $(\Goo)^*\Gthth$ cannot intersect
$\Delta$ (since there are no points $(x,\xi,x,\xi)$ in that canonical
relation by the support condition and ({\it \ref{Delta canonical}})
and it cannot intersect $C_1$ for a similar reason by ({\it \ref{C1
canonical}}).  So $(\Goo)^*\Gthth\in I^3(C_2\setminus (\Delta \cup
C_1))$.

A similar argument using symmetry of support of $\Gthth$ and $\Gthf$
in the $x_1$ axis shows that $(\Goo)^*\Gthf\in I^3(C_2\setminus (\Delta \cup
C_1))$.

Putting these together, we see that $(\Goo)^* \Gth\in I^3(\Delta
\setminus (C_1\cup C_2))+I^3(C_1\setminus (\Delta\cup
C_2))+I^3(C_2\setminus (\Delta\cup C_1))$.

The proof that  $(\Got)^*\Gth\in I^3(\Delta \setminus
(C_1\cup C_2))+I^3(C_1\setminus (\Delta\cup C_2))+I^3(C_2\setminus
(\Delta \cup C_1))$ is similar but here we
use the partition of $\Got$: $\Gotp$, $\Goto$ and $\Gotm$.
In a similar way, $(\Goth)^*\Gth\in I^3(\Delta\setminus
(C_1\cup C_2))+I^3(C_1\setminus (\Delta\cup C_2))+I^3(C_2\setminus
(\Delta\cup C_1))$.

Thus, $(\Go)^*\Gth\in  I^3(\Delta\setminus
(C_1\cup C_2))+I^3(C_1\setminus (\Delta\cup
C_2)+I^3(C_2\setminus(\Delta\cup C_1))$.

Now we consider $(\Gt)^*\Gth$.  Here we partition $\Gc_2^j$, $j=1,2$
into three operators with smooth symbols as we did for $\Go$:
\begin{itemize}
\item $\Gc_2^{j+}$ will have  support in $x$ for fixed $s$ in the
union of circles \hfil\newline $\cup_{k\in
[1+\eps,1+2\eps]} C(s,k)$ (outside of $\Stwox(s)$),

\item $\Gc_2^{jo}$ will have support in $x$ for fixed $s$ in the union
of circles $\cup_{k\in [1-\eps,1+\eps]} C(s,k)$ (surrounding
$\Stwox(s)$) and be equal to the symbol of $\Gt$ in \hfil\newline$\cup_{k\in
[1-\eps/2,1+\eps/2]} C(s,k)$, and

\item $\Gc_2^{j-}$ will have support
in $x$ for fixed $s$ in the union of circles \hfil\newline $\cup_{k\in
[1-2\eps,1-\eps]} C(s,k)$ (inside $\Stwox(s)$).
\end{itemize}
The proof follows similar arguments as for $(\Go)^*\Gth$ and it shows
$(\Gt)^*\Gth\in  I^3(\Delta\setminus
(C_1\cup C_2))+I^3(C_1\setminus (\Delta\cup
C_2)+I^3(C_2\setminus(\Delta\cup C_1))$.

Finally, we consider $(\Gth)^*\Gc$.  By symmetry of the conditions
({\it \ref{Delta canonical}}), ({\it \ref{C1 canonical}}), ({\it
\ref{C2 canonical}}), we justify $(\Gth)^*\Go$ and $(\Gth)^*\Gt$ are
in $ I^3(\Delta\setminus (C_1\cup C_2))+I^3(C_1\setminus (\Delta\cup
C_2))+I^3(C_2\setminus (\Delta \cup C_1))$.  So,
the only composition to consider is $(\Gth)^*\Gth$, and by analyzing
all combinations, we see $(\Gth)^*\Gth\in I^3(\Delta\setminus
(C_1\cup C_2))+I^3(C_1\setminus (\Delta\cup
C_2))+I^3(C_2\setminus(\Delta\cup C_1))$.  This finishes the proof.

\end{proof}

 We are left with the analysis of the compositions
$\Gc_{1}^{*}\Gc_{1}$ and $\Gc_{2}^{*}\Gc_{2}$. This is the content of
the next theorem:

\begin{theorem}
\label{ImagingSection:I(p,l) regularity} Let $\Gc_{1}$ and $\Gc_{2}$
be as above. Then
\begin{enumerate}[(a)]
\item \label{G1*G1} $\Gc_{1}^{*}\Gc_{1}\in I^{3,0}(\Delta,
C_{1})+I^3(C_2\setminus (\Delta \cup C_1))$.
\item\label{G2*G2} $\Gc_{2}^{*}\Gc_{2}\in I^{3,0}(\Delta,
C_2)+I^{3,0}(C_{1}, C_2)$.
\end{enumerate}
\end{theorem}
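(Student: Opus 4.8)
\emph{The plan.} In each part I first decompose the operator so that every summand has its $x$-support confined to one of the regions cut out by the curve $\Stwox(s)$ and the $x_1$-axis, and I then analyze each of the resulting compositions with one of three tools: the transverse intersection calculus (when the relevant projections are local diffeomorphisms on the supports involved), the iterated-regularity / fold-blowdown argument used in the proof of Theorem~\ref{thm:F*F alpha>=0} (when the wavefront lies in $\Delta\cup C_1$), or the two-sided-fold composition theorem of Felea--Nolan, Theorem~\ref{thm:Felea Nolan} (when the wavefront lies in $\Delta\cup C_2$). Which of $\Delta$, $C_1$, $C_2$ a given composition feeds is read off from the support conditions using the rules (i), (ii), (iii) recorded in the proof of Lemma~\ref{ImagingSection:Lemma G1}.

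\emph{Part (a).} Write $\Gc_1=\Gc_1^1+\Gc_1^2+\Gc_1^3$ exactly as in the proof of Lemma~\ref{ImagingSection:Lemma G1}, with $\Gc_1^i$ supported in the portion of $\{|x_2|<2\eps\}$ lying, respectively, to the left of, inside, and to the right of $\Stwox(s)$; the factor $(1-\psi_2)$ keeps each $\Gc_1^i$ away from $\Stwox(s)$, hence away from $\Sigma_2$, and the cutoffs $f$ and $g$ keep it away from the point directly between transmitter and receiver and from $\Sigma_1\cap\Sigma_2$, so on each support the only degeneracy of the projections of $\CG$ is the fold/blowdown along $\Sigma_1$ (Theorem~\ref{thm:G alpha<0}). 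Now $\Gc_1^*\Gc_1=\sum_{i,j}(\Gc_1^i)^*\Gc_1^j$. For $i=j$ the support forbids $C_2$, so $WF'\bigl((\Gc_1^i)^*\Gc_1^i\bigr)\subseteq\Delta\cup C_1$, and the iterated-regularity argument of the proof of Theorem~\ref{thm:F*F alpha>=0}, with the generators~\eqref{generators:F}, gives $(\Gc_1^i)^*\Gc_1^i\in I^{3,0}(\Delta,C_1)$. For $\{i,j\}=\{1,3\}$ the supports lie on opposite sides of $\Stwox(s)$ but both outside it, which forbids $\Delta$, $C_1$ and $C_2$, so the composition is smoothing. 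For $\{i,j\}=\{1,2\}$ or $\{2,3\}$ the supports forbid $x=y$ and $x=\rho(y)$ (reflection in the $x_1$-axis preserves inside/outside), so only $C_2$ survives; since both supports are bounded away from $\Sigma_2$ the composition is covered by the transverse intersection calculus and lies in $I^3(C_2\setminus(\Delta\cup C_1))$, using that $\Delta\cap C_2$ and $C_1\cap C_2$ both sit over $\Stwox(s)$ by Proposition~\ref{Wavefront  of composition II}. Summing gives the claim.

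\emph{Part (b).} Write $\Gc_2=\Gc_2^1+\Gc_2^2$ with $\Gc_2^j$ supported in the upper/lower half-plane near $\Stwox(s)$, as in~\eqref{def:G2i}. Because the phase $-\omega(t-A-B)$ and the amplitude of $\Gc_2$ depend on $x_2$ only through $x_2^2$ and $\psi_1,\psi_2$ are symmetric in $x_2$, a direct check at the level of Schwartz kernels gives $\Gc_2^1R=\Gc_2^2$, where $R$ is pullback by $\rho(x_1,x_2)=(x_1,-x_2)$, an order-zero elliptic FIO whose canonical relation is $C_1$. The canonical relation of each $\Gc_2^j$ is a piece of $\CG$ lying in a neighborhood of $\Sigma_2$ and avoiding $\Sigma_1$, hence a two-sided fold by Theorem~\ref{thm:G alpha<0}. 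Split $\Gc_2^*\Gc_2=(\Gc_2^1)^*\Gc_2^1+(\Gc_2^2)^*\Gc_2^2+(\Gc_2^1)^*\Gc_2^2+(\Gc_2^2)^*\Gc_2^1$. For the diagonal terms the half-plane support forbids $C_1$, so $WF'\bigl((\Gc_2^j)^*\Gc_2^j\bigr)\subseteq\Delta\cup C_2$; Theorem~\ref{thm:Felea Nolan} gives $(\Gc_2^j)^*\Gc_2^j\in I^{3,0}(\Delta,\widetilde C)$ for a two-sided fold $\widetilde C$, and the wavefront bound forces $\widetilde C=C_2$ (this also verifies the assertion used in Proposition~\ref{Wavefront  of composition II} that $C_2$ is a two-sided fold). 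For the cross terms, using $\Gc_2^2=\Gc_2^1R$ and $R^2=\mathrm{Id}$ we get $(\Gc_2^1)^*\Gc_2^2=R(\Gc_2^2)^*\Gc_2^2$ and $(\Gc_2^2)^*\Gc_2^1=R(\Gc_2^1)^*\Gc_2^1$, each of which is $R$ applied to an element of $I^{3,0}(\Delta,C_2)$. Since $R$ is elliptic of order $0$ associated to the canonical graph $C_1$, and $C_1\circ\Delta=C_1$, $C_1\circ C_2=C_2$ (on $C_1$ the reflection fixes $A$ and $B$ and hence the defining relation of $C_2$), with $C_1\circ(\Delta\cap C_2)=C_1\cap C_2$ still a clean codimension-one intersection, the $I^{p,l}$-calculus (Definition~\ref{def:Ipl}) yields $R\circ I^{3,0}(\Delta,C_2)\subseteq I^{3,0}(C_1,C_2)$. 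Adding the four pieces gives $\Gc_2^*\Gc_2\in I^{3,0}(\Delta,C_2)+I^{3,0}(C_1,C_2)$. In both parts the order exponents $p=3$, $l=0$ are computed exactly as in the proof of Theorem~\ref{thm:F*F alpha>=0}.

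\emph{Main obstacle.} The bookkeeping in the two decompositions -- confirming that each sub-composition lands cleanly in a single regime and pinning down which of $\Delta$, $C_1$, $C_2$ it feeds -- is the bulk of the work, but the genuinely delicate step is the cross-term analysis in part (b): one must establish that $R$ really is an elliptic order-zero FIO with canonical relation $C_1$, that the identity $\Gc_2^1R=\Gc_2^2$ holds exactly (not merely modulo smoothing), and that conjugating the paired-Lagrangian class $I^{3,0}(\Delta,C_2)$ by $R$ lands one in $I^{3,0}(C_1,C_2)$ with the intersection $C_1\cap C_2$ -- which is the image of $\Sigma_2$ -- still clean of codimension one, so that the target class is defined. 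A secondary point needing care is that the mute functions $f$ and $g$ are arranged so as to remove precisely those configurations where the fold/blowdown or two-sided-fold structure would otherwise fail, so that those failures never enter the compositions analyzed above.
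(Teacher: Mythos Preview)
Your proof is correct and follows essentially the same route as the paper: the same decompositions of $\Gc_1$ and $\Gc_2$, the same appeal to the iterated-regularity argument of Theorem~\ref{thm:F*F alpha>=0} for the fold/blowdown pieces and to Theorem~\ref{thm:Felea Nolan} for the two-sided-fold pieces, and the same reflection trick with $R$ for the cross terms in part~(b). The only cosmetic difference is that the paper cites \cite[Proposition~4.1]{Guillemin-Uhlmann} explicitly for the composition $R\circ I^{3,0}(\Delta,C_2)\subseteq I^{3,0}(C_1,C_2)$ (noting that $C_1\times\Delta$ and $C_1\times C_2$ meet $T^*X\times\Delta_{T^*X}\times T^*X$ transversally), whereas you invoke the $I^{p,l}$ invariance more informally via Definition~\ref{def:Ipl}.
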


\bpr
 Consider the intersections of $\Delta, \ C_1, \ C_2$. We
have that $\Delta$ intersects $C_1$ cleanly in codimension $2$;
$\Delta$ intersects $C_2$ cleanly in codimension $1$ and $C_1$
intersects $C_2$ cleanly in codimension $2$.

For part \eqref{G1*G1} we decompose $\Go=\Goo+\Got+\Goth$.  Now, we
consider the compositions that $ (\Gc_{1}^{j})^{*}\Gc_{1}^{j} $ for
$j=1,2,3$.  Using \eqref{Delta canonical}, \eqref{C1 canonical}, and
\eqref{C2 canonical}, we have that
$WF'((\Gc_{1}^{j})^{*}\Gc_{1}^{j})\subset \Delta \cup C_{1}$.  Then,
using a proof similar to the one for Theorem \ref{thm:F*F alpha>=0},
we see that $(\Gc_{1}^{j})^{*}\Gc_{1}^{j}\in I^{3,0}(\Delta,C_{1})$.

 Arguments using \eqref{Delta canonical}, \eqref{C1 canonical}, and
\eqref{C2 canonical} show that the cross terms $(\Goo)^*\Got$,
$(\Got)^*\Goo$, $(\Got)^*\Goth$, and $(\Goth)^*\Got$ are in $I^3(C_2\setminus (\Delta \cup
C_1))$ and  $(\Goth)^*\Goo$ and $(\Goo)^*\Goth$ are smoothing.

\vskip .5 cm

Now, we consider part \eqref{G2*G2} and the operator $\Gc_2^*\Gc_2$.

We recall that $\Sigma_1$ and $\Sigma_2$ are disjoint, $\Sigma_2 \in C
\setminus \Sigma_1$ thus $C \setminus \Sigma_1$ is a two sided fold.
Next we use \cite{Melrose-Taylor} to get that $(C \setminus
\Sigma_1)^t \circ (C \setminus \Sigma_1)=\Delta \cup C_2$, and
that $C_2$ is a two sided fold.

\par We use the decomposition \eqref{def:G2i} $\Gc_2=\Gc_2^{1}+ \Gc_2^{2}$
where $\Gc_2^{1}$ is supported in the upper part of $\Sigma_2$ and
$\Gc_2^{2}$ is supported in the lower part of $\Sigma_2$.  Note that
the support in $x$ of $\Gc_2^1$ and $\Gc_2^2$ are disjoint.

Then using Theorem
\ref{thm:Felea Nolan} we have that \[(\Gc_2^{1})^*\Gc_2^{1} \in
I^{3,0}(\Delta, C_2)\ \ \text{and}\ \ (\Gc_2^{2})^*\Gc_2^{2} \in
I^{3,0}(\Delta, C_2).\]

Consider the operator $R$ defined as follows:
\[
RV(x_{1},x_{2})=V(x_{1},-x_{2}).
\]
This is a Fourier integral operator of order $0$ and it is easy to
check its canonical relation is $C_{1}$.
Let $\hat{\Gc}=\Gc_{2}^{2}\circ R$. We have $\hat{\Gc}^{*}\Gc_{2}^{1}\in I^{3,0}(\Delta,C_{2})$. Note that $C_1 \circ \Delta=C_1$, $C_{1}\circ
C_2=C_2$ and $C_1\times \Delta$ (as well as $C_1 \times
C_2$) intersects $T^{*}X\times \Delta_{T^{*}{X}}\times T^{*}X$
transversally. Using \cite[Proposition 4.1]{Guillemin-Uhlmann},  this implies that
$R^{*}\tilde{\Gc}^{*}\Gc_{2}^{1}\in I^{3,0}(C_1, C_2)$. Since
$\tilde{\Gc}^{*}=R^{*}(\Gc_{2}^{2})^{*}$ and $(R^{*})^{2}=\mbox{Id}$ we
have $(\Gc_{2}^{2})^{*}\Gc_{2}^{1}\in I^{3,0}(C_1, C_2)$

Similarly, we show that \[(\Gc_2^{1})^*\Gc_2^{2}
\in I^{3,0}(C_1, C_2).\]

This concludes the proof of Statement \eqref{O2} of Theorem
\ref{thm:G*G}.\epr

\subsection{Spotlighting}\label{sect:spotlighting}
This is equivalent to
assuming the scatterer $V$ has support in either the open half-plane $x_2>0$ or $x_2<0$.
In this case, $C_1$ does not appear in the analysis.

 \begin{theorem}\label{thm:beamforming} Let $\Gc$ be as in
\eqref{def:F} of order $\frac{3}{2}$. Assume the amplitude of $\Gc$ is
nonzero only on a subset of either the upper half-plane $(x_2>0)$ or
the lower half plane $x_{2}<0$ and bounded away from the $x_1$ axis.
Then $\Gc^*\Gc \in I^{3,0}(\Delta, C_2)$, where $C_2$ is given by
\eqref{def:C2}.

\end{theorem}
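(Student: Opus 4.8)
The plan is to deduce this from Theorems~\ref{thm:G alpha<0} and~\ref{thm:Felea Nolan}. As in the rest of Section~\ref{sect:alpha-}, I would first reduce to $\A<-1$, the range $\A\in(-1,0)$ being handled by the diffeomorphisms $(x_1,x_2)\mapsto(-x_1,x_2)$ and $(s,t)\mapsto(s/|\A|,t)$. The point of the spotlighting hypothesis is that, since the amplitude of $\Gc$ is supported in one open half-plane and bounded away from the $x_1$-axis, the relevant part of the canonical relation $\CG$ (see~\eqref{def:Canonical relation}) lies entirely in $\{x_2\neq 0\}$ and hence never meets $\Sigma_1$ from~\eqref{def:Sigma1}. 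By Theorem~\ref{thm:G alpha<0}, on $\{x_2\neq 0\}$ the projections $\pi_L$ and $\pi_R$ drop rank (simply, by one) only along $\Sigma_2$, where $\pi_L$ has a Whitney fold and $\pi_R$ now also has only a Whitney fold, the blowdown of $\pi_R$ having occurred along $\Sigma_1$ alone; away from $\Sigma_2$ both are local diffeomorphisms. Since $\CG$ is a smooth manifold (globally parameterized by $(s,x_1,x_2,\omega)$ with $x_2\neq 0$), this says precisely that $\CG$ is a two-sided (folding) canonical relation in the sense of Definition~\ref{def:fold}.

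Next I would apply Theorem~\ref{thm:Felea Nolan}: $\Gc$ is an FIO of order $3/2$ with two-sided fold canonical relation, and (because the amplitude is compactly supported, so that $\Gc$ is properly supported and $\Gc^*\Gc$ is well defined) it follows that $\Gc^*\Gc\in I^{3,0}(\Delta,\tilde{C})$ for some two-sided fold $\tilde{C}$. It then remains to identify $\tilde{C}$ with the $C_2$ of~\eqref{def:C2}. For this I would reuse the composition analysis carried out for Proposition~\ref{Wavefront of composition II}: the branch~\eqref{C1} of that argument, which produces $C_1$, forces the two base points to be reflections of one another across the $x_1$-axis and hence to lie in opposite half-planes --- impossible under the support hypothesis --- so only the branch~\eqref{C2}, producing $C_2$, survives. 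Thus $WF'(\Gc^*\Gc)\subset\Delta\cup C_2$, which together with $\Gc^*\Gc\in I^{3,0}(\Delta,\tilde{C})$ forces $\tilde{C}=C_2$. Equivalently, one may cite the computation in the proof of Theorem~\ref{ImagingSection:I(p,l) regularity}\ref{G2*G2} that $\CG^t\circ\CG=\Delta\cup C_2$ with $C_2$ a two-sided fold.

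I do not expect a serious obstacle: essentially all of the work is already contained in Theorem~\ref{thm:G alpha<0} (which shows that, once $\Sigma_1$ is removed, the projections of $\CG$ have only fold singularities) and in Theorem~\ref{thm:Felea Nolan}. The only points needing a little care are (i) checking that the spotlighting support condition genuinely removes $\Sigma_1$ and kills the $C_1$-branch of the composition, and (ii) verifying that $\Gc^*\Gc$ is a legitimate composition, which is ensured by the compact-support assumption on the amplitude exactly as in Sections~\ref{sect:prel-mod} and~\ref{sect:F}. If anything is delicate, it is the assertion that $\pi_R$ restricted to $\{x_2\neq 0\}$ has only a fold --- and not a blowdown --- along $\Sigma_2$, but that is precisely part (5) of Theorem~\ref{thm:G alpha<0}.
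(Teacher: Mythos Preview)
Your proposal is correct and follows essentially the same route as the paper: once the spotlighting hypothesis removes $\Sigma_1$, Theorem~\ref{thm:G alpha<0} gives that both $\pi_L$ and $\pi_R$ have only fold singularities along $\Sigma_2$, so $\CG$ is a two-sided fold and Theorem~\ref{thm:Felea Nolan} (the Felea--Nolan result) yields $\Gc^*\Gc\in I^{3,0}(\Delta,C_2)$, with the identification $\tilde C=C_2$ coming from the composition computation of Proposition~\ref{Wavefront of composition II}. The paper's own proof is terser but structurally identical.
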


\begin{proof}
We assume $x_2> 0$ (the other case is similar), $\Sigma_1$ is empty and $\pi_L$ and $\pi_R$ have fold
singularities along $\Sigma_2$ as proved in Proposition \ref{Wavefront
of composition II}. Thus $ C^t \circ C= \Delta \cup C_2$ where $C_2$ is a two-sided fold. Using the results in Felea \cite{RF1} and Nolan
\cite{Nolan-fold_caustics}, we have that $\Gc^*\Gc \in I^{3,0}(\Delta, C_2)$.
 \end{proof}

In this case, $C_2$ does contribute to the added
singularities and this is discussed in Remark \ref{remark:alpha- general}.

\section{Acknowledgements}

All authors thank The American Institute of Mathematics (AIM) for the
SQuaREs (Structured Quartet Research Ensembles) award, which enabled
their research collaboration, and for the hospitality during the
authors' visits to AIM in 2011, 2012, and 2013.  Most of the results
in this paper were obtained during the last two visits. Support by the
Institut Mittag-Leffler (Djursholm, Sweden) is gratefully acknowledged
by Krishnan, Nolan, and Quinto.

The authors thank the referees for their thorough, thoughtful,
and insightful comments that made the article clearer.

Ambartsoumian was supported in part by NSF grants DMS 1109417 and DMS
1616564, and by Simons Foundation grant 360357.

Felea was supported in part by Simons Foundation grant 209850.

Krishnan was supported in part by NSF grants DMS 1109417 and DMS
1616564. He also benefited from the support of Airbus Corporate
Foundation Chair grant titled ``Mathematics of Complex Systems''
established at TIFR CAM and TIFR ICTS, Bangalore, India.

Quinto was partially supported by NSF grants DMS 1311558
 and DMS 1712207,
and a fellowship from the Otto M{\o}nsteds Fond during fall 2016 at
the Danish Technical University as well from the Tufts University
Faculty Research Awards Committee.

\begin{appendix}

\section{Proofs of iterated regularity for $\Fc$ ($\alpha \geq
0$)}\label{proofs:iterated regularity}

In this section, we prove that each of the $\tilde{p}_i$ given
in \eqref{generators:F} is a sum of products of derivatives of $\Phi$
and smooth functions. This will finish the proof that $\Fc^*\Fc\in
I^{3,0}(\Delta,C_1)$.

\subsection{Expression for $x_{1}-y_{1}$} We will use the same prolate
spheroidal coordinates (\ref{Prolate coordinate system}) with foci
$\gr(s)$ and $\gt(s)$ to solve for $x$ and $y$.  We have
\begin{align}
\notag x_{1}-y_{1}&= \lb \frac{1+\alpha}{2}s +
\frac{1-\alpha}{2}s\cosh \rho \cos \phi\rb  \\
\notag&\qquad- \lb \frac{1+\alpha}{2}s + \frac{1-\A}{2}s\cosh \rho' \cos \phi'\rb \\
\notag & = \frac{ 1-\A}{2} s\lb \cosh\rho \cos\phi - \cosh\rho' \cos \phi'\rb\\
\label{x1-y1 prolate coordinates expression}& = \frac{1-\A}{2} s\lb (\cosh \rho -\cosh\rho')\cos\phi + \cosh \rho'( \cos \phi-\cos\phi')\rb.
\end{align}
We have
\begin{align*}\PD_{\omega} \Phi& = \big(
 \norm{y-\gamma_{T}(s)}+\norm{y-\gamma_{R}(s)}
-(\norm{x-\gamma_{T}(s)}+\norm{x-\gamma_{R}(s)})\big)\\
&=(1-\A)s (\cosh\rho'-\cosh\rho).
\end{align*}
Therefore in \eqref{x1-y1 prolate coordinates expression}, it is enough to express $\cos \phi-\cos \phi'$ in terms of $\PD_{\omega} \Phi$ and $\PD_{s}\Phi$. We obtain:
\begin{align*}
\frac{\PD_{s}\Phi}{\omega}& = \lb \A \frac{x_{1}-\A s }{A} + \frac{x_{1}-s}{B}\rb - \lb \A \frac{y_{1}-\A s }{A'} + \frac{y_{1}-s}{B'}\rb \\
& =\A\frac{\cosh \rho \cos \phi+1}{\cosh \rho + \cos \phi} + \frac{\cosh \rho \cos \phi -1}{\cosh \rho -\cos \phi}\\
&\quad -\lb \A\frac{\cosh \rho' \cos \phi'+1}{\cosh \rho' + \cos
\phi'} + \frac{\cosh \rho' \cos \phi' -1}{\cosh \rho' -\cos
\phi'}\rb\end{align*}
Combining the first and the third term, and second and the fourth term above and then simplifying, we get
{\small \begin{align*}& = \A \frac{(\cos \phi -\cos \phi')(\cosh \rho
 \cosh\rho'-1)+ (\cosh \rho-\cosh\rho')(\cos\phi \cos\phi'-1)}{(\cosh
 \rho'+\cos\phi')(\cosh \rho +\cos\phi)}\\
 & \quad+ \frac{(\cos \phi - \cos \phi')(\cosh \rho \cosh \rho'-1)+ (\cosh \rho -\cosh\rho')(1-\cos \phi \cos \phi')}{(\cosh \rho -\cos\phi)(\cosh \rho'-\cos\phi')}\\
 & = (\cos \phi - \cos \phi')(\cosh \rho \cosh \rho'-1)\\
 & \quad\quad\times\lb \frac{\A}{(\cosh \rho'+\cos\phi')(\cosh \rho +\cos\phi)}+ \frac{1}{(\cosh\rho - \cos \phi)(\cosh\rho'- \cos\phi')}\rb\\
 & \quad+ (\cosh \rho -\cosh\rho')(\cos \phi \cos\phi'-1)\\
 & \quad \quad\times\lb \frac{\A}{(\cosh \rho'+\cos\phi')(\cosh \rho
 +\cos\phi)}-\frac{1}{(\cosh\rho - \cos \phi)(\cosh\rho'-
 \cos\phi')}\rb
 \end{align*}}
where $\times$ indicates multiplication with the
expression in the previous line. Now denote
\[
P_{\pm}:=\frac{\A}{(\cosh \rho'+\cos\phi')(\cosh \rho +\cos\phi)}\pm  \frac{1}{(\cosh\rho - \cos \phi)(\cosh\rho'- \cos\phi')}
\]
Note that since $\A>0$, $P_{+}>0$.
Therefore we have
\begin{align*}
\cos \phi - \cos \phi' = \frac{1}{ (\cosh\rho \cosh\rho'-1)P_{+}}\lb \frac{1}{\omega} \PD_{s}\Phi-\frac{(1-\cos \phi \cos \phi')}{(1-\A)s} P_{-} \PD_{\omega} \Phi\rb
\end{align*}
Now using this expression for the difference of cosines in \eqref{x1-y1 prolate coordinates expression}, we are done.
\subsection{Expression for $x_{2}^{2} - y_{2}^{2}$}
We have
\begin{align}
\notag x_{2}^{2} - y_{2}^{2} & = \frac{(1-\A)^{2} s^{2}}{4}\lb \sinh^{2} \rho \sin^{2} \phi \cos^{2} \theta - \sinh^{2} \rho' \sin^{2} \phi' \cos^{2} \theta'\rb\\
\notag & = \frac{(1-\A)^{2} s^{2}}{4}\lb \sinh^{2} \rho \sin^{2} \phi - \sinh^{2} \rho' \sin^{2} \phi'\rb \\
\label{0 term} & + \frac{(1-\A)^{2} s^{2}}{4}\lb  -\sinh^{2} \rho \sin^{2} \phi \sin^{2} \theta + \sinh^{2} \rho' \sin^{2} \phi' \sin^{2} \theta'\rb
\end{align}
Since $x_{3}=y_{3}=0$, we have that the last term in \eqref{0 term} is $0$.

Now we can write \\
$
\sinh^{2} \rho \sin^{2} \phi -\sinh^{2} \rho'\sin^{2} \phi' = (\cosh^{2} \rho -\cosh^{2} \rho')\sin^{2} \phi -(\cos^{2} \phi -\cos^{2} \phi')\sinh^{2} \rho'= \\ (\cosh\rho -\cosh\rho')(\cosh\rho +\cosh\rho')\sin^{2} \phi -(\cos \phi -\cos \phi') (\cos \phi +\cos \phi') \sinh^{2} \rho'.
$\\
Since $\cosh \rho -\cosh \rho'$  and $\cos \phi -\cos\phi'$ can be expressed in terms of $\PD_{\omega} \Phi$ and $\PD_{s}\Phi$ as above, we are done.

\subsection{Expression for $\xi_{1}-\eta_{1}$}
We have
\begin{align*}
& \xi_1=-\omega \left(\frac{x_1-\alpha s}{\sqrt{(x_1-\alpha
s)^2+x_2^2+h^2}}+\frac{x_1-s}{\sqrt{(x_1-s)^2+x_2^2+h^2}}\right)\\
&\eta_{1}=-\omega \left(\frac{y_1-\alpha s}{\sqrt{(y_1-\alpha
s)^2+y_2^2+h^2}}+\frac{y_1-s}{\sqrt{(y_1-s)^2+y_2^2+h^2}}\right)
\end{align*}
In prolate spheroidal coordinates, we have
\begin{align*}
\frac{\xi_{1} -\eta_{1}}{2\omega}& = \lb \frac{ \sinh^{2} \rho' \cos \phi'}{\cosh^{2} \rho'-\cos^{2} \phi'} - \frac{\sinh^{2} \rho \cos \phi}{\cosh^{2} \rho-\cos^{2} \phi}\rb \\
& = \Bigg{(}\frac{ \sinh^{2} \rho' \cos \phi'}{\cosh^{2} \rho'-\cos^{2} \phi'} - \frac{ \sinh^{2} \rho' \cos \phi'}{\cosh^{2} \rho-\cos^{2} \phi}\\
& \qquad +\frac{ \sinh^{2} \rho' \cos \phi'}{\cosh^{2} \rho-\cos^{2} \phi}-\frac{\sinh^{2} \rho \cos \phi}{\cosh^{2} \rho-\cos^{2} \phi}\Bigg{)}\\
&= \sinh^{2} \rho' \cos \phi'\Bigg{(}\frac{ \cosh^{2} \rho-\cosh^{2} \phi' + \cos^{2} \phi'-\cos^{2} \phi}{(\cosh^{2} \rho'-\cos^{2} \phi')(\cosh^{2} \rho-\cos^{2} \phi)}\Bigg{)} \\
& \qquad +\Bigg{(}\frac{ \sinh^{2} \rho' \cos \phi'-\sinh^{2} \rho \cos \phi}{\cosh^{2} \rho-\cos^{2} \phi}\Bigg{)}.\\
&=\sinh^{2} \rho' \cos \phi'\Bigg{(}\frac{ \cosh^{2} \rho-\cosh^{2} \phi' + \cos^{2} \phi'-\cos^{2} \phi}{(\cosh^{2} \rho'-\cos^{2} \phi')(\cosh^{2} \rho-\cos^{2} \phi)}\Bigg{)} \\
& \qquad +\Bigg{(}\frac{ (\cosh^{2} \rho'-\cosh^{2} \rho) \cos \phi'+\sinh^{2} \rho(\cos\phi'- \cos \phi)}{\cosh^{2} \rho-\cos^{2} \phi}\Bigg{)}.
\end{align*}
As before we get the terms $\cosh \rho -\cosh \rho'$  and $\cos \phi -\cos\phi'$ which can be expressed in terms of $\PD_{\omega} \Phi$ and $\PD_{s}\Phi$.

\subsection{Expression for $(x_{2}-y_{2})(\xi_{2} + \eta_{2})$}
We have \\
$\xi_2=-\omega \left(\frac{x_2}{\sqrt{(x_1-\alpha
s)^2+x_2^2+h^2}}+\frac{x_2}{\sqrt{(x_1-s)^2+x_2^2+h^2}}\right)$ \  and \\
$\eta_{2}=-\omega \left(\frac{y_2}{\sqrt{(y_1-\alpha
s)^2+y_2^2+h^2}}+\frac{y_2}{\sqrt{(y_1-s)^2+y_2^2+h^2}}\right)$ \\
Thus
\begin{align*}
&-\frac{(x_{2}-y_{2})(\xi_{2}+\eta_{2})}{\frac{4}{(1-\A)s}\omega} =\frac{x_{2}^{2}\cosh \rho}{\cosh^{2} \rho -\cos^{2} \phi} -\frac{y_{2}^{2} \cosh\rho'}{\cosh^{2} \rho'-\cos^{2}\phi'} \\
& \qquad\qquad\qquad +x_{2}y_{2} \lb \frac{\cosh \rho'}{\cosh^{2} \rho'-\cos^{2}\phi'}-\frac{\cosh \rho}{\cosh^{2} \rho -\cos^{2} \phi}\rb\\
& \qquad= (x_{2}^{2} -y_{2}^{2})\frac{\cosh \rho}{\cosh^{2} \rho -\cos^{2} \phi}\\
&\qquad\qquad\qquad  - y_{2}(x_{2}-y_{2}) \lb \frac{ \cosh \rho}{\cosh^{2} \rho -\cos^{2}\phi} - \frac{ \cosh \rho'}{\cosh^{2} \rho' -\cos^{2}\phi'}\rb.
\end{align*}
Now
\begin{align*}
\frac{ \cosh \rho}{\cosh^{2} \rho  -\cos^{2}\phi} & - \frac{ \cosh \rho'}{\cosh^{2} \rho' -\cos^{2}\phi'} = \frac{ \cosh \rho-\cosh \rho'}{\cosh^{2} \rho -\cos^{2}\phi}\\
&+ \cosh \rho' \lb \frac{\cosh^{2} \rho'-\cosh^{2} \rho + \cos^{2} \phi -\cos^{2} \phi'}{(\cosh^{2} \rho -\cos^{2} \phi)(\cosh^{2} \rho'-\cos^{2} \phi')}\rb.
\end{align*}
Next we use again the expressions for $\cosh \rho -\cosh \rho'$  and
$\cos \phi -\cos\phi'$ as before and for $x_2^2-y_2^2$ we use
\eqref{0 term}.

\subsection{Expression for $(x_{2}+y_{2})(\xi_{2} -\eta_{2})$}
We have
\begin{align*}
&\frac{(x_{2}+y_{2})(\xi_{2}-\eta_{2})}{\frac{4}{(1-\A)s}\omega}\\
&\qquad =\frac{-x_{2}^{2}\cosh \rho}{\cosh^{2} \rho -\cos^{2} \phi} +\frac{y_{2}^{2} \cosh\rho'}{\cosh^{2} \rho'-\cos^{2}\phi'} \\
& \qquad\qquad +x_{2}y_{2} \lb \frac{\cosh \rho'}{\cosh^{2} \rho'-\cos^{2}\phi'}-\frac{\cosh \rho}{\cosh^{2} \rho -\cos^{2} \phi}\rb\\
&\qquad = (y_{2}^{2} -x_{2}^{2})\frac{\cosh \rho}{\cosh^{2} \rho -\cos^{2} \phi}\\
&\qquad\qquad  +y_{2}(x_{2}+y_{2}) \lb \frac{ \cosh \rho}{\cosh^{2} \rho -\cos^{2}\phi} - \frac{ \cosh \rho'}{\cosh^{2} \rho' -\cos^{2}\phi'}\rb.
\end{align*}
Now we are in a similar situation as in the previous case.
\subsection{Expression for $\xi_{2}^{2} -\eta_{2}^{2}$} We have
\begin{align*}
&\frac{\xi_{2}^{2} -\eta_{2}^{2}}{(\frac{4\omega}{(1-\A)s})^{2}} = \lb \frac{x_{2}^{2} \cosh^{2} \rho}{\cosh^{2} \rho -\cos^{2} \phi}-\frac{y_{2}^{2} \cosh^{2}\rho'}{\cosh^{2} \rho'-\cos^{2}\phi'}\rb\\
&\qquad=\frac{(x_{2}^{2} -y_{2}^{2})\cosh^{2} \rho}{\cosh^{2} \rho-\cos^{2} \phi}+ y_{2}^{2} \lb \frac{\cosh^{2} \rho}{\cosh^{2} \rho -\cos^{2} \phi}-\frac{\cosh^{2} \rho'}{\cosh^{2} \rho' -\cos^{2} \phi'}\rb\\
&\qquad=\frac{(x_{2}^{2} -y_{2}^{2})\cosh^{2} \rho}{\cosh^{2} \rho-\cos^{2} \phi}+ y_{2}^{2} \Bigg{(} \frac{\cosh^{2} \rho-\cosh^{2} \rho'}{\cosh^{2} \rho -\cos^{2} \phi}+\\
&\qquad\qquad \qquad \cosh^{2}\rho'\frac{(\cosh^{2} \rho'-\cosh^{2}\rho) +(\cos^{2} \phi -\cos^{2} \phi')}{(\cosh^{2} \rho' -\cos^{2} \phi')(\cosh^{2} \rho -\cos^{2} \phi)}\Bigg{)}.
\end{align*}
This part is complete as well.

\section{Expressions for $t^{-}_{s}$ and
$t^{+}_s$}\label{min-max-times}
\begin{figure}[h]
\begin{center}
\includegraphics[width=15.8cm,height=6.4cm,keepaspectratio]{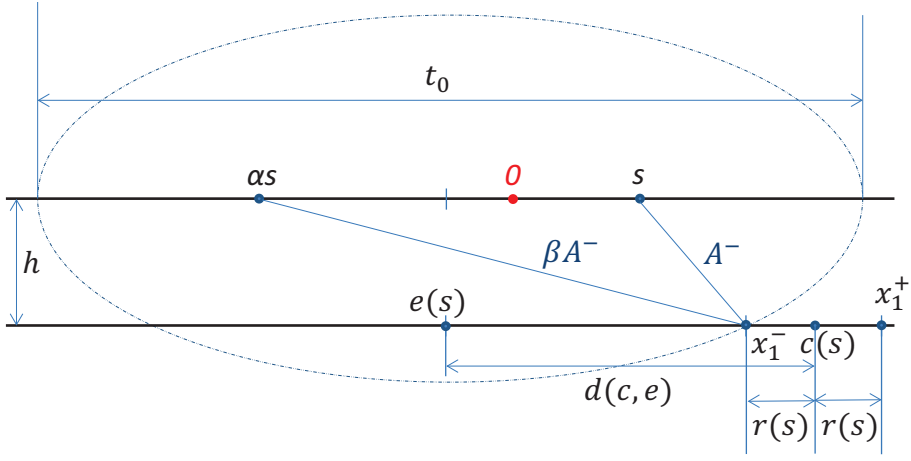}
\end{center}
\caption{The geometric setup of filtering, showing the vertical cross-section corresponding to $x_2=0$.}\label{filtering_pic}
\end{figure}
Recall that $\Sigma_2$ is defined in \eqref{def:Sigma2} as
\[\Sigma_2=\left\{(s,x,\omega)\in \CG:\;\left(x_1-\frac{2\alpha s}{\alpha+1}\right)^2 + x_2^2=-\alpha s^2\frac{(\alpha-1)^2}{(\alpha+1)^2}-h^2\right\}
\]
Recall that $s_0$ is defined by \eqref{def:s0} and for $s>\so$
$\Sigma_2$ is nonempty and not trivial.

We assume in this section that the cutoff function $f$ in
Section \ref{sect:main results} is chosen so it is
zero for $s\leq \so$.

The radius and the $x_1$-coordinate of the center of circle $\Sigma_2$ are
$$
r(s)=\sqrt{\frac{-\alpha s^2(\alpha-1)^2}{(\alpha+1)^2}-h^2},\;\;\mbox{and}\;\;
c(s)=\frac{2\alpha s}{\alpha+1}.
$$
Let $e(s)=(\alpha+1)s/2<0$ denote the $x_1$-coordinate of the center of ellipses in the plane. Then the distance between $e(s)$ and $c(s)$ can be written as
$$
d(c,e)=-\frac{s(\alpha-1)^2}{2(\alpha+1)}.
$$

For a fixed $s$ let $t^{-}_{s}$ and $t^{+}_{s}$ denote correspondingly the smallest and the largest values of $t$, for which the ellipsoid intersects $\Sigma_2$.
Notice, that since the normal to an ellipse at a point $P$ bisects the angle from the $P$ to the foci, the condition $\widetilde{\gamma_R}(s)\le\gamma_R(s)<c(s)$ implies that our ellipses on the ground can not intersect the circle $\Sigma_2$ at more than two points.
Here $\widetilde{\gamma_R}(s)$ denotes the right focus of the ellipse on the ground. Figure \ref{filtering_pic} shows the setup for $t_0$, where the ellipsoid passes through $x_1^-$, the closest to $e(s)$ point of $\Sigma_2$. The setup for $t^{+}_{s}$ is similar, with the ellipsoid passing through $x_1^+$, which is the farthest from $e(s)$ point of $\Sigma_2$.

A straightforward computation shows that
\bel{def:t0t1}
t^{-}_{s}=2\,(\beta+1)\sqrt{\frac{d\,(d-r)}{\beta^2+1}},\;\;\;t^{+}_{s}=2\,(\beta+1)\sqrt{\frac{d\,(d+r)}{\beta^2+1}},
\ee
where $\beta=\sqrt{-\alpha}$.

\end{appendix}



\def\dbar{\leavevmode\hbox to 0pt{\hskip.2ex \accent"16\hss}d}

\end{document}